\newtheorem{lem}{Lemma}[section]
\newtheorem{thm}[lem]{Theorem}
\newtheorem{prop}[lem]{Proposition}
\newtheorem{cor}[lem]{Corollary}
\theoremstyle{definition}
\newtheorem{exa}[lem]{Example}
\newtheorem{rem}[lem]{Remark}
\newcommand{\Q}{\Bbb{Q}}
\newcommand{\F}[1]{\Bbb{F}_{#1}}
\newcommand{\Z}{\Bbb{Z}}
\newcommand{\C}{\Bbb{C}}
\newcommand{\oddpart}[1]{\left( #1\right)_{\mbox{ \tiny odd}}}
\newcommand{\spl}[2]{\operatorname{SL}_{#1}\left(#2\right)}
\newcommand{\gl}[2]{\mathrm{GL}_{#1}(#2)}
\newcommand{\pb}[1]{\mathcal{P}(#1)}
\newcommand{\qpb}[1]{\widetilde{\mathcal{P}}(#1)}
\newcommand{\rpb}[1]{{\mathcal{RP}}(#1)}
\newcommand{\rpbker}[1]{\mathcal{RP}_1(#1)}
\newcommand{\qrpb}[1]{\widetilde{\mathcal{RP}}(#1)}
\newcommand{\rpbp}[1]{\mathcal{RP}_+(#1)} 
\newcommand{\qrpbker}[1]{\widetilde{\mathcal{RP}}_1(#1)}
\newcommand{\bl}[1]{\mathcal{B}(#1)}
\newcommand{\rbl}[1]{{\mathcal{RB}}(#1)}
\newcommand{\gpb}[1]{\left[ #1\right]} 
\newcommand{\sgpb}[1]{[ #1 ]}
\newcommand{\sus}[1]{\left\{ #1\right\} }
\newcommand{\suss}[2]{\psi_{#1}\left( #2\right)}
\newcommand{\rfmod}[2]{{#1}\left\{ #2\right\}}
\newcommand{\kv}[1]{\mathcal{L}_{#1}}
\newcommand{\bconst}[1]{C_{#1}}
\newcommand{\cconstmod}[1]{\mathcal{D}_{#1}}
\newcommand{\bw}{\Lambda}
\newcommand{\spec}[1]{S_{#1}}
\newcommand{\sspec}[1]{\bar{S}_{#1}}
\newcommand{\pn}[2]{\mathbb{P}^{#1}(#2)}
\newcommand{\projl}[1]{\pn{1}{#1}}
\newcommand{\sq}[1]{{#1}^\times/({#1}^\times)^2}
\newcommand{\primes}{\mathrm{Primes}}
\newcommand{\laur}[2]{{#1}[{#2},{#2}^{-1}]}
\newcommand{\card}[1]{\left| #1 \right|}
\renewcommand{\forall}{\mbox{ for all }}
\newcommand{\dual}[1]{\widehat{#1}}
\renewcommand{\ker}[1]{\mathrm{Ker}(#1)}
\newcommand{\hmz}[2]{\mathrm{Hom}(#1,#2)}
\newcommand{\unitr}[1]{{#1}^\times}
\newcommand{\modtwo}[1]{{#1}/2}
\newcommand{\ptor}[2]{{#1}_{(#2)}}
\newcommand{\ntor}[2]{_{#1}{#2}}
\newcommand{\sgr}[1]{\mathrm{R}_{#1}}
\newcommand{\sgrp}[1]{\mathrm{R}^+_{#1}}
\newcommand{\an}[1]{\left\langle{#1}\right\rangle}
\newcommand{\pf}[1]{\left\langle\!\left\langle{#1}\right\rangle\!\right\rangle}
\newcommand{\aug}[1]{\mathcal{I}_{#1}}
\newcommand{\pows}[2]{#1\left[\!\left[ #2 \right]\!\right]}
\newcommand{\laurs}[2]{{#1}\left(\!\left( #2 \right)\!\right)}
\newcommand{\ep}[1]{\mathrm{e}^{#1}_+} 
\newcommand{\epm}[1]{\mathrm{e}^{#1}_-} 
\newcommand{\upp}[2]{{#1}^{#2}}
\newcommand{\dwn}[2]{{#1}_{#2}}
\newcommand{\asymm}{\circ}
\newcommand{\asym}[3]{\mathrm{S}^{#1}_{#2}(#3)}
\newcommand{\zhalf}[1]{{#1}\left[ \tfrac{1}{2}\right]}
\newcommand{\znth}[2]{{#1}\left[\frac{1}{#2}\right]}
\newcommand{\kind}[1]{K^{\mathrm{\small ind}}_3(#1)}
\newcommand{\ho}[3]{\mathrm{H}_{#1}\left( #2,#3 \right)}
\newcommand{\hoz}[2]{\ho{#1}{#2}{\Z}}
\newcommand{\hot}[2]{\mathrm{H}_3\left( \spl{2}{#1},#2\right)_0}
\newcommand{\ind}[2]{\mathrm{Ind}^{#1}_{#2}}
\newcommand{\Tor}[2]{\mathrm{Tor}^{\Z}_{1}(#1,#2)}
\title{The third homology  of $\mathrm{SL}_2(\Q)$ 
}
\author{Kevin Hutchinson}
\address{School of Mathematics and Statistics,
 University College Dublin, Belfield, Dublin 4, Ireland}
\email{kevin.hutchinson@ucd.ie}
\date{\today}
\keywords{
$K$-theory, Group Homology
}
\subjclass{19G99, 20G10}
\begin{document}
\bibliographystyle{plain}
\maketitle

\begin{abstract}
We calculate the structure of 
$\ho{3}{\spl{2}{\Q}}{\zhalf{\Z}}$. Let $\hot{\Q}{\Z}$ denote the kernel of the (split) surjective homomorphism
$\ho{3}{\spl{2}{\Q}}{\Z}\to \kind{\Q}$. Each prime number $p$ determines an operator $\an{p}$ on $\ho{3}{\spl{2}{\Q}}{\Z}$ with square the identity. 
We prove that $\hot{\Q}{\zhalf{\Z}}$ is the direct sum of the $(-1)$-eigenspaces of these operators. The $(-1)$-eigenspace of $\an{p}$ is the scissors congruence group, over 
$\zhalf{\Z}$, of the field $\F{p}$, which is a cyclic group whose order is
the odd part of ${p+1}$.
\end{abstract}

\section{Introduction}\label{sec:intro}
Many years ago, in an article on the homology of Lie groups made discrete, Chi-Han Sah, quoting S. Lichtenbaum, cited our lack of any precise knowledge of the structure of 
$\ho{3}{\spl{2}{\Q}}{\Z}$ as an example of the poor state of our understanding of the homology of linear groups of general fields
(see \cite[pp 307-8]{sah:discrete3}). Where such understanding does exist, even now, it tends often to come from connections with algebraic $K$-theory or Lie group theory where a bigger suite of mathematical tools is available. For example, we know the structure of $\ho{3}{\spl{3}{\Q}}{\Z}$ because homology stability theorems tell us that it is isomorphic to $\ho{3}{\spl{n}{\Q}}{\Z}$ for all larger $n$ (\cite{hutchinson:tao2}) and this stable homology group is in turn isomorphic, via a Hurewicz homomorphism,  to $K_3(\Q)/\{ -1\}\cdot K_2(\Q)=\kind{\Q}$ (indecomposable $K_3$) by \cite[Lemma 5.2]{sus:bloch}, which is known to be  cyclic of order $24$ by the result of Lee and Szczarba (\cite{lee:szczarba}). 

For any field $F$, the natural map $\ho{3}{\spl{2}{F}}{\Z}\to \ho{3}{\spl{3}{F}}{\Z} \cong  K_3(F)/\{ -1\}\cdot K_2(F)\to \kind{F}$ can be shown to be surjective (\cite{hutchinson:tao2}). When $F=\C$, or more generally when $F$ is algebraically closed, it has long been known, thanks to the work of Sah and his co-authors,  that this map is an isomorphism.
 Note that when $F$ is a number field, or a global function field, the map $\ho{3}{\spl{3}{F}}{\Z}\to \kind{F}$ is 
an isomorphism, since $\ho{3}{\spl{2}{F}}{\Z}\cong \ho{3}{\spl{\infty}{F}}{\Z}$ by stability (see \cite{hutchinson:tao2}), $\ho{3}{\spl{\infty}{F}}{\Z}\cong K_3(F)/\{ -1\}\cdot K_2(F)$ by
\cite[Lemma 5.2]{sus:bloch} and furthermore $\{ -1\}\cdot K_2(F)=K_3^M(F)$ (Milnor $K_3$) by the calculations of \cite{bass:tate}.  Thus, for any number field $F$, the kernel of the map $\ho{3}{\spl{2}{F}}{\Z}\to \kind{F}$ is 
 just the kernel of the stability homomorphism  $\ho{3}{\spl{2}{F}}{\Z}\to \ho{3}{\spl{3}{F}}{\Z}$. 

One natural obstruction to the injectivity or surjectivity of the stability homomorphisms\\
 $\ho{\bullet}{\spl{n}{F}}{\Z}\to \ho{\bullet}{\spl{n+1}{F}}{\Z}$ lies in the action of the multiplicative group 
$F^\times$: For any $a\in F^\times$ conjugation on $\spl{n}{F}$ by a matrix $M$ of determinant $a$ induces an automorphism of $\ho{\bullet}{\spl{n}{F}}{\Z}$ which depends only on $a$. In particular, 
$a^n=\det(\mathrm{diag}(a,\ldots,a))$ acts trivially. Since the stability homomorphism is a map of $\Z[F^\times]$ modules, both $a^n$ and $a^{n+1}$ act trivially on its image, and so the action of 
$F^\times$ on the image of this map is  trivial. It follows that the stability homomorphism factors through the coinvariants of $F^\times$ on $ \ho{\bullet}{\spl{n}{F}}{\Z}$ and has image lying in the  
invariants of $F^\times$ on $ \ho{\bullet}{\spl{n+1}{F}}{\Z}$. In particular, when $F^\times$ acts nontrivially on $ \ho{\bullet}{\spl{n}{F}}{\Z}$, the stability homomorphism has a nontrivial kernel,
since it contains $\aug{F}\ho{\bullet}{\spl{n}{F}}{\Z}$, where $\aug{F}$ denotes the augmentation ideal of the group ring $\Z[F^\times]$. 

For example, the calculations of Suslin in \cite{sus:tors} tell us that for any infinite (or sufficiently large)  field $F$ the map $\ho{2}{\spl{2}{F}}{\Z}\to \ho{2}{\spl{3}{F}}{\Z}\cong K_2(F)$ is surjective
with kernel $\aug{F}\ho{2}{\spl{2}{F}}{\Z}$ isomorphic to $I(F)^3$ where $I(F)$ denotes the fundamental ideal in the Grothendieck-Witt ring of the field $F$. In the case $F=\Q$, this kernel is isomorphic 
to the $\Z[\Q^\times]$-module $\Z$ on which $-1$ acts by negation and all primes act trivially.  

  B. Mirzaii has shown (\cite{mirzaii:third}) for infinite fields $F$ that the kernel of the stability homomorphism $\ho{3}{\spl{2}{F}}{\Z}\to \ho{3}{\spl{3}{F}}{\Z}=\kind{F}$, when tensored with $\zhalf{\Z}$,
 is  $\aug{F}\ho{3}{\spl{2}{F}}{\zhalf{\Z}}$; i.e., it is again the case that the only obstruction to injective stability is the nontriviality of the action of the multiplicative group. He subsequently (\cite{mirzaii:manyunits}) generalised this result to rings with many units (including local rings with infinite residue fields).

The main theorem of this article (Theorem \ref{thm:main}) describes the structure of $\aug{\Q}\ho{3}{\spl{2}{\Q}}{\zhalf{\Z}}$ as a $\Z[\Q^\times]$-module. $-1\in \Q^\times$ acts trivially, but each prime acts nontrivially. Since the 
the squares of rational numbers act trivially, each prime induces a decomposition into $(+1)$- and $(-1)$-eigenspace.  The theorem states that this module is the direct sum over all primes of 
these $(-1)$-eigenspaces. The $(-1)$-eigenspace of the prime $p$ is isomorphic, via a natural residue homomorphism $S_p$, to $\zhalf{\pb{\F{p}}}$, the \emph{scissors congruence  group} of the field $\F{p}$. It follows that as an abelian group 
\[
\ho{3}{\spl{2}{\Q}}{\zhalf{\Z}}\cong \zhalf{\kind{\Q}}\oplus\left( \bigoplus_{p}\zhalf{\pb{\F{p}}}\right)\cong \Z/3\oplus \left(\bigoplus_p \Z/\oddpart{p+1}\right)
\]  
where $\oddpart{m}$ denotes the odd part of $m\in \Q^\times$; i.e. $\oddpart{m}=2^{-v_2(m)}m$. 

As explained in Section \ref{sec:main} below, this theorem can be stated equivalently as follows: For any field $F$, let $\hot{F}{\Z}$ denote the kernel of the 
surjective homomorphism $\ho{3}{\spl{2}{F}}{\Z}\to \kind{F}$.  Then the map $\ho{3}{\spl{2}{\Q}}{\Z}\to \prod_p \ho{3}{\spl{2}{\Q_p}}{\Z}$ (product over all primes)
induces an isomorphism
\[
\hot{\Q}{\zhalf{\Z}}\cong \bigoplus_p \hot{\Q_p}{\zhalf{\Z}}.
\]
(In Section \ref{sec:val} we give a new more streamlined proof of the identification 
\[
\hot{\Q_p}{\zhalf{\Z}}\cong \zhalf{\pb{\F{p}}}.)
\]

The main tool we use is the description of $\ho{3}{\spl{2}{F}}{\zhalf{\Z}}$ in terms of \emph{refined} scissors congruence groups. The scissors congruence group $\pb{F}$ of a field $F$ was introduced by Dupont and Sah in \cite{sah:dupont}. It is an abelian group defined by a presentation in terms of generators and relations and it  was shown by the authors to be closely related to $\kind{F}=\ho{3}{\spl{2}{F}}{\Z}$ when $F$ is algebraically closed. Soon afterwards Suslin proved (\cite[Theorem 5.2]{sus:bloch}) that the connection to $\kind{F}$ persists for all infinite fields $F$  (see Theorem \ref{thm:suslin} below). 
However, to derive an analagous result for $\ho{3}{\spl{2}{F}}{\Z}$ for general fields it is necessary to factor in the action of the multiplicative group of the field. The refined scissors congruence group 
$\rpb{F}$ of the 
field $F$ -- introduced in \cite{hut:cplx13} -- is defined by generators and relations analagously to the scissors congruence group but as a module over $\Z[F^\times]$ and not merely an abelian group.  
It can then be shown to bear approximately the same relation to $\ho{3}{\spl{2}{F}}{\Z}$ as $\pb{F}$ has to $\kind{F}$. (For a precise statement, see Theorem \ref{thm:rbl} below.)  Using some later results of the author about refined scissors congruence groups, our starting point in this article is essentially a presentation of $\aug{\Q}\ho{3}{\spl{2}{\Q}}{\zhalf{\Z}}$ as a module over the group ring $\Z[\Q^\times/(\Q^\times)^2]$ as well as the existence of module homomorphisms $S_p:\aug{\Q}\ho{3}{\spl{2}{\Q}}{{\Z}}\to \pb{\F{p}}$ (where the target is a module via $a\cdot x= (-1)^{v_p(a)}x$ for $a\in \Q^\times$), one for each prime $p$. 

\begin{rem} In our main theorem, we prove that the module homomorphism\\
 $\aug{\Q}\ho{3}{\spl{2}{\Q}}{\Z}\to \bigoplus_p \pb{\F{p}}$ induced by the maps $S_p$, ranging over all primes $p$, becomes an isomorphism after tensoring with $\zhalf{\Z}$. It is natural to ask whether the original homomorphism is an isomorphism over $\Z$. 

I do not know. Our methods of proof and $2$-torsion ambiguities in existing results require us to work over $\zhalf{\Z}$. However, it is not hard to show even over $\Z$ that the cokernel of this map is annihilated by $4$. 
\end{rem}
\begin{rem} It is to be expected that some version of the main result should hold for general number fields and even global fields. In order to arrive at such a result it would appear necessary first to 
determine whether the action of the (square classes of) the global units is trivial on the groups $\ho{3}{\spl{2}{F}}{\zhalf{\Z}}$. There is some mild evidence suggesting that this is so: (i) for any field the square class $\an{-1}$ acts 
trivially (see Theorem \ref{thm:rpbp} below)  and (ii) for local fields with finite residue field, the units act trivially. (This follows, for example, from Corollary \ref{cor:compdv} below.)  We hope to examine these questions elsewhere. 
\end{rem}

\begin{rem} There ought also to be analogous results for geometric function fields, at least over algebraically closed, or quadratically closed, fields.

 For example,
let $\hot{\C(x)}{\Z}$ denote the kernel of $\ho{3}{\spl{2}{\C(x)}}{\Z}\to \kind{\C(x)}$. There is a natural surjective homomorphism of $\Z[\C(x)^\times]$-modules
\[
\hot{\C(x)}{\Z}\to \bigoplus_{p\in \projl{\C}}\pb{\C}
\]
where the action of $\C(x)^\times$ on the component   $\pb{\C}$ indexed by a given $p$ on the right is given by $f\cdot x:= (-1)^{v_p(f)}x$. By analogy with our main theorem below, it is natural to ask whether this map is an isomorphism. (The group $\pb{\C}$ is known to be a $\Q$-vector space and one would expect the left-hand side also to be uniquely divisible, so that the result should hold without the need to invert $2$.)  
\end{rem} 

\textbf{Acknowledgements.} I thank the referee for a very careful and thorough reading of the article, and in particular for identifying a gap (now filled) in the proof of Theorem \ref{thm:compdv}. 

\subsection{Layout of the article}

In Section \ref{sec:rsc} we review some of the relevant known results about scissors congruence groups and their relation to the third homology of $\mathrm{SL}_2$ of fields. We introduce here the module $\rpbp{F}$ associated to a field $F$, which coincides with module $\rpbker{F}$ on tensoring with $\zhalf{\Z}$, but has the advantage of being a quotient rather than a submodule of $\rpb{F}$, and thus is defined by a presentation.  Our main results in the article depend on computations in $\rpbp{F}$. 

 In Section \ref{sec:val}, we use the algebraic properties of the refined scissors congruence groups to calculate $\ho{3}{\spl{2}{F}}{\zhalf{\Z}}$ for fields 
$F$ which are complete with respect to a discrete valuation. The results of this section give an update and a  strengthening of the main results of \cite{hut:sl2dv}.

In Section \ref{sec:Q} we specialize to the case of the field $\Q$ and state the main theorem.

Section \ref{sec:main} contains the proof of the main theorem (Theorem \ref{thm:main}) using the results and methods outlined in Sections \ref{sec:val} and 
\ref{sec:Q}.

In Section \ref{sec:app}, we describe some further applications of the main theorem; for example, the calculation of $\ho{3}{\spl{2}{\laur{\Q}{t}}}{\zhalf{\Z}}$ and an explicit description of a basis for the $\F{3}$-vector space elements of order dividing $3$ in $\ho{3}{\spl{2}{\Q}}{\Z}$. 

\subsection{Notation}
For a commutative unital ring $R$, $\unitr{R}$ denotes the group of units of $R$. 

For any abelian group $A$, we denote $A\otimes \znth{\Z}{n}$ by $\znth{A}{n}$. For any prime $p$, $\ptor{A}{p}$ denotes the vector space  $ \{ a\in A |\ pa=0\}$, of elements of order dividing $p$ in $A$. 

If $q$ is a prime power, $\F{q}$ will denote the finite field with $q$ elements.

For a group $G$ and a $\Z[G]$-module $M$, $M_G$ will denote the module of coinvariants; 
$M_G=\ho{0}{G}{M}=M/\aug{G}{M}$, where $\aug{G}$ is the augmentation ideal of $\Z[G]$.
 
Given an abelian group $G$ we let  $\asym{2}{\Z}{G}$ denote the group 
\[
\frac{G\otimes_{\Z}G}{<x\otimes y + y\otimes x | x,y \in G>}
\]
and, for $x,y\in G$, we denote by $x\asymm y$ the image of $x\otimes y$ in $\asym{2}{\Z}{G}$.

For any rational prime $p$, $v_p:\Q^\times\to \Z$ denotes the corresponding discrete valuation, determined by $a=p^{v_p(a)}\cdot (m/n)$ with $m,n$ not divisible by $p$. 

For a field $F$, we let $\sgr{F}$ denote the group ring $\Z[\sq{F}]$ of the group 
of square classes of $F$ and we let $\aug{F}$ denote 
the augmentation ideal of $\sgr{F}$. If $x\in F^\times$, we denote the corresponding square-class, 
considered as an element of $\sgr{F}$, by $\an{x}$. The generators $\an{x}-1$ of $\aug{F}$ will be 
denoted $\pf{x}$.
\section{Refined scissors congruence groups}\label{sec:rsc}

In this section we review some of the relevant known facts about the third homology of $\mathrm{SL}_2$ of fields and its description in terms of refined scissors congruence groups. 
\subsection{Indecomposable $K_3$}

For any field $F$ there is a natural surjective homomorphism
\begin{eqnarray}\label{eqn:kind}
\hoz{3}{\spl{2}{F}}\to \kind{F}.
\end{eqnarray}

When $F$ is quadratically closed (i.e. when $F^\times= (F^\times)^2$) this map is an isomorphism. 
However, in general, the group extension 
\[
1\to \spl{2}{F}\to\gl{2}{F}\to F^\times\to 1
\]
induces an action -- by conjugation -- of $F^\times$ on $\hoz{\bullet}{\spl{2}{F}}$ which factors 
through $\sq{F}$. It 
can be shown that the map (\ref{eqn:kind}) is a homomorphism of $\sgr{F}$-modules (where $\sq{F}$ acts trivially on $\kind{F}$) and induces an isomorphism
\begin{eqnarray}\label{iso}
\ho{3}{\spl{2}{F}}{\zhalf{\Z}}_{\sq{F}}\cong\zhalf{\kind{F}}
\end{eqnarray}
(see \cite[Proposition 6.4]{mirzaii:third}), but -- as our calculations in \cite{hut:rbl11}  show -- the action 
of $\sq{F}$ on 
$\hoz{3}{\spl{2}{F}}$ is in general non-trivial. 

 Let 
$
\hot{F}{\Z}$ denote the kernel of the surjective homomorphism ${\ho{3}{\spl{2}{F}}{\Z}\to\kind{F}}$. This is an $\sgr{F}$-submodule of $\ho{3}{\spl{2}{F}}{\Z}$. Note that the isomorphism (\ref{iso}) implies that 
\[
\hot{F}{\zhalf{\Z}}=\aug{F}\ho{3}{\spl{2}{F}}{\zhalf{\Z}}.
\]

\begin{rem} When $F$ is a number field the surjective homomorphism  
$\ho{3}{\spl{2}{F}}{\Z}\to\kind{F}$ is split as a map of $\Z$-modules. In fact, $\kind{F}$ is a finitely generated abelian group and it is enough to show that there is a torsion subgroup of 
$\ho{3}{\spl{2}{F}}{\Z}$ mapping isomorphically to the (cyclic) torsion subgroup of $\kind{F}$. But this latter statement follows from the explicit calculations of C. Zickert in \cite[Section 8]{zick:ext}. 
It follows that, as an abelian group, 
\[
\ho{3}{\spl{2}{F}}{\Z}\cong \kind{F}\oplus \hot{F}{\Z}
\]
for any number field $F$. 

However, there is no such decomposition of $\ho{3}{\spl{2}{F}}{\Z}$ as
an $\sgr{F}$-module. For details, see Remark \ref{rem:split} below. 
\end{rem}

\subsection{Scissors Congruence Groups}

For a field $F$, with at least $4$ elements, the \emph{scissors congruence group} 
(also called the  \emph{pre-Bloch group}), $\pb{F}$, is the group 
generated 
by the elements $\gpb{x}$, $x\in F^\times$,  subject to the relations
\[
R_{x,y}:\ 0=  \gpb{x}-\gpb{y}+\gpb{\frac{y}{x}}-\gpb{\frac{1-x^{-1}}{1-y^{-1}}}+\gpb{\frac{1-x}{1-y}} \quad x,y\not= 1.
\] 

The map 
\[
\lambda:\pb{F}\to \asym{2}{\Z}{F^\times},\quad  [x]\mapsto \left(1-{x}\right)\asymm {x}
\]
is well-defined, and the \emph{Bloch group of $F$}, $\bl{F}\subset \pb{F}$, is 
defined to be the kernel of $\lambda$. 

For the fields with $2$ and $3$ elements  the following definitions allow us to include 
these fields in the statements of some of our results:

$\pb{\F{2}}=\bl{\F{2}}$ is a cyclic group of order $3$ with generator denoted $\bconst{\F{2}}$.  We let $\gpb{1}:=0$ in $\pb{\F{2}}$.

$\pb{\F{3}}$ is cyclic of order $4$ with generator $\gpb{-1}$. We have $\gpb{1}:=0$ in $\pb{\F{3}}$. $\bl{\F{3}}$ is the subgroup generated by $2\gpb{-1}$.  

We recall (see, for example, \cite[Lemma 7.4]{hut:cplx13}):

\begin{lem}\label{lem:pfin} If $q$ is a prime power then  $\bl{\F{q}}$ is cyclic of order
$(q+1)/2$ when $q$ is odd and $q+1$ when $q$ is even. 
\end{lem}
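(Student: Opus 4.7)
The plan is to split the proof into three pieces: determine the pre-Bloch group $\pb{\F{q}}$, compute the image of $\lambda$, and combine.

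\emph{Step 1: $\pb{\F{q}}$ is cyclic of order $q+1$.} This is the substantive step. I would first establish cyclicity by exploiting symmetries of the five-term relation $R_{x,y}$: specializations such as $y = 1/x$ and $y = 1-x$ yield standard identities linking the generators $\gpb{x}$ as $x$ runs over the six-element cross-ratio orbit $\{x, 1/x, 1-x, 1/(1-x), (x-1)/x, x/(x-1)\}$, and tracking these identities across all orbits of $\F{q} \setminus \{0, 1\}$ forces $\pb{\F{q}}$ to be singly generated. The order is then sandwiched by two matching bounds. For the lower bound, I would construct a surjection $\pb{\F{q}} \twoheadrightarrow \Z/(q+1)$ from a nontrivial character $\chi$ of the cyclic group $\unitr{\F{q^2}}/\unitr{\F{q}}$ of order $q+1$: for $x \in \F{q}\setminus\{0,1\}$, lift $x$ to a suitable $\alpha \in \F{q^2}$ and send $\gpb{x}$ to $\chi(\alpha)$; verifying that the five-term relation maps to zero is a Jacobi-sum identity on $\F{q}$. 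For the upper bound, the cleanest route is through Quillen's computation $K_3(\F{q}) = \Z/(q^2 - 1)$, combined with the identification $\kind{\F{q}} = K_3(\F{q})$ (which holds since $K_2(\F{q}) = 0$) and the known Tor-contribution of order essentially $q-1$ in the $K$-theoretic presentation of the Bloch group, yielding the divisibility $|\pb{\F{q}}|$ divides $q+1$.

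\emph{Step 2: image of $\lambda$.} When $q$ is a power of $2$, $\unitr{\F{q}}$ has odd order $q - 1$; the exponent of $\asym{2}{\Z}{\unitr{\F{q}}}$ divides both $2$ and $q-1$, so this group is trivial. Hence $\lambda = 0$ and $\bl{\F{q}} = \pb{\F{q}}$ is cyclic of order $q+1$.

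When $q$ is odd, $\asym{2}{\Z}{\unitr{\F{q}}} \cong \Z/2$, generated by $g \asymm g$ for a generator $g$ of $\unitr{\F{q}}$. To see $\lambda$ is surjective, it suffices to find $x \in \F{q}\setminus\{0,1\}$ with both $x$ and $1-x$ non-squares; for $q \ge 5$ a standard character-sum count gives $(q - \epsilon)/4$ such $x$ with $\epsilon \in \{1,3,5\}$, in particular positive. For $q = 3$ the convention yields $\pb{\F{3}} = \Z/4\cdot \gpb{-1}$ and one checks directly $\lambda(\gpb{-1}) = 2 \asymm (-1) = (-1)\asymm(-1)$ is the nontrivial element. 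In both cases $\bl{\F{q}} = \ker(\lambda)$ is the index-$2$ subgroup of $\pb{\F{q}}$, cyclic of order $(q+1)/2$.

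The main obstacle is the lower bound in Step 1: producing the character-valued invariant requires a careful choice of lift $x \mapsto \alpha$ and a genuine Jacobi-sum verification of the five-term relation. The upper bound, by contrast, is most efficiently obtained by invoking the $K$-theoretic machinery of finite fields rather than attempting a direct combinatorial count of independent relations in the defining presentation of $\pb{\F{q}}$.
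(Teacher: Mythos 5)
The paper itself offers no proof of this lemma --- it is quoted from \cite[Lemma 7.4]{hut:cplx13} --- so your argument has to stand on its own, and it has genuine gaps. Your Step 2 is fine: $\asym{2}{\Z}{\F{q}^\times}$ vanishes for even $q$ and is $\Z/2$ for odd $q$, and an $x$ with $x$ and $1-x$ both nonsquares shows $\lambda$ is onto it. The problem is Step 1. The lower bound is the entire content of the lemma and you leave it unconstructed: ``lift $x$ to a suitable $\alpha\in\F{q^2}$'' is not a definition, and it is far from clear that any pointwise recipe $\gpb{x}\mapsto\chi(\alpha(x))$ can be made to satisfy the five-term relation; the surjections $\pb{\F{q}}\twoheadrightarrow\Z/(q+1)$ that exist in the literature are produced homologically, from the action of $\spl{2}{\F{q}}$ on $\projl{\F{q}}$ or $\projl{\F{q^2}}$, where $\Z/(q+1)$ enters as the non-split torus, not from a Jacobi-sum identity. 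You flag this yourself as the ``main obstacle,'' which is an acknowledgement that the key step is missing. The cross-ratio-orbit argument for cyclicity is likewise only a gesture, though it is also unnecessary: once both bounds are in place, a surjection between finite groups of the same order is an isomorphism, so cyclicity is automatic.

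The upper bound has a circularity problem. Suslin's sequence (Theorem \ref{thm:suslin}) is proved by Suslin only for infinite fields; its finite-field case is established exactly in \cite{hut:cplx13}, the paper in which the present statement is Lemma 7.4, and there the computation of $\bl{\F{q}}$ feeds into that result rather than following from it. So invoking ``the known Tor-contribution'' for $\F{q}$ either reproves the finite-field Bloch--Wigner sequence independently or silently imports the lemma being proved. Note too that if that sequence together with Quillen's $K_3(\F{q})\cong\Z/(q^2-1)$ were legitimately available, the lemma would be immediate --- $\bl{\F{q}}$ is then a quotient of the cyclic group $\kind{\F{q}}$ by a subgroup of order $2(q-1)$ for odd $q$, resp.\ $q-1$ for even $q$ --- so your lower-bound construction and the determination of $\pb{\F{q}}$ would be redundant; and the phrase ``order essentially $q-1$'' elides precisely the factor $2$, present exactly when $q$ is odd, that produces the dichotomy $(q+1)/2$ versus $q+1$. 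Finally, $q=2,3$ are handled in this paper by convention, and your presentation-theoretic arguments are vacuous there (for $q=2$ the presentation has no generators), so those cases must be treated separately.
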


The following corollary is particularly relevant to this article:

\begin{cor}\label{cor:pfin}
If $q$ is a prime power then  $\zhalf{\pb{\F{q}}}$ is cyclic of order
$\oddpart{q+1}$. 
\end{cor}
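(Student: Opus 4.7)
The plan is to use the defining exact sequence $\bl{F} = \ker(\lambda : \pb{F} \to \asym{2}{\Z}{\unitr{F}})$ together with Lemma \ref{lem:pfin}, and to observe that for a finite field the codomain of $\lambda$ has trivial $2$-inverted part, so that $\zhalf{\bl{\F{q}}} \to \zhalf{\pb{\F{q}}}$ becomes an isomorphism.

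First I would analyze $\asym{2}{\Z}{\unitr{\F{q}}}$. The group $\unitr{\F{q}}$ is cyclic of order $q-1$; if $g$ is a generator, then $\asym{2}{\Z}{\unitr{\F{q}}}$ is generated by the single element $g \asymm g$, which satisfies $2(g \asymm g) = 0$ from the defining relation $x \otimes y + y \otimes x = 0$ specialised at $x = y = g$. Hence $\asym{2}{\Z}{\unitr{\F{q}}}$ has exponent dividing $2$ (indeed it is trivial if $q$ is even, since $q-1$ is then odd and $g \asymm g$ has order dividing both $2$ and $q-1$). In either case $\zhalf{\asym{2}{\Z}{\unitr{\F{q}}}} = 0$.

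It follows from the short exact sequence $0 \to \bl{\F{q}} \to \pb{\F{q}} \xrightarrow{\lambda} \asym{2}{\Z}{\unitr{\F{q}}}$ and the exactness of tensoring with $\zhalf{\Z}$ that the inclusion induces an isomorphism $\zhalf{\bl{\F{q}}} \cong \zhalf{\pb{\F{q}}}$. (For the edge cases $q=2,3$, where the definitions are given separately, the same conclusion is immediate from the explicit descriptions: $\pb{\F{2}} = \bl{\F{2}}$ is cyclic of order $3$, and $\pb{\F{3}}$ is cyclic of order $4$ with Bloch subgroup of index $2$, so both localise to the stated cyclic groups of odd order.)

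It remains to read off the order from Lemma \ref{lem:pfin}. If $q$ is even, $\bl{\F{q}}$ is cyclic of order $q+1$, which is already odd, so $\zhalf{\pb{\F{q}}} \cong \bl{\F{q}}$ has order $q+1 = \oddpart{q+1}$. If $q$ is odd, $\bl{\F{q}}$ is cyclic of order $(q+1)/2$; writing $q+1 = 2^k m$ with $m$ odd and $k \geq 1$, we have $(q+1)/2 = 2^{k-1} m$, so $\zhalf{\bl{\F{q}}}$ is cyclic of order $m = \oddpart{q+1}$. The only ``obstacle'' here is the bookkeeping between the two parities of $q$, and the small prime-power cases $q = 2, 3$, all of which are handled directly.
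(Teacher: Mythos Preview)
Your proof is correct and follows essentially the approach the paper intends: the corollary is stated without proof immediately after Lemma~\ref{lem:pfin}, and your argument---that $\pb{\F{q}}/\bl{\F{q}}$ embeds in the $2$-group $\asym{2}{\Z}{\unitr{\F{q}}}$, so localisation at $2$ identifies $\zhalf{\pb{\F{q}}}$ with $\zhalf{\bl{\F{q}}}$, whose order is then read off from the lemma---is exactly the bridge the reader is expected to supply.
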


The Bloch group is closely related to the indecomposable $K_3$ of the field $F$:

\begin{thm}\label{thm:suslin}
For any field $F$ there is a natural exact sequence 
\[
0\to \widetilde{\Tor{\mu_F}{\mu_F}}\to \kind{F}\to \bl{F} \to 0
\]
where $ \widetilde{\Tor{\mu_F}{\mu_F}}$ is the unique nontrivial extension of $\Tor{\mu_F}{\mu_F}$ 
by $\Z/2$.
\end{thm}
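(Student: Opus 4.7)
The plan is to reconstruct Suslin's original argument from \cite{sus:bloch}. The starting point is an explicit natural homomorphism $\sigma \colon \hoz{3}{\spl{2}{F}} \to \pb{F}$ built from the action of $\spl{2}{F}$ on $\projl{F}$. Concretely, the spectral sequence associated to the simplicial set of ordered tuples of distinct projective points produces, at the relevant edge, a map sending the class of a $4$-tuple to $\gpb{x}$ with $x$ its cross-ratio; the $E^1$-differential coming from $5$-tuples encodes exactly the defining relation $R_{x,y}$. One then checks the image lies in $\bl{F} \subset \pb{F}$: applying $\lambda$ reduces the question to showing that the symbol $\{1-x,x\}$ vanishes in $\kind{F}$, which is immediate from decomposability in $K_2$.

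Next, I would factor $\sigma$ through $\kind{F}$. Since $\bl{F}$ carries the trivial $\sq{F}$-action, $\sigma$ is $\sq{F}$-invariant, so the isomorphism (\ref{iso}) provides a factorization $\bar\sigma \colon \zhalf{\kind{F}} \to \zhalf{\bl{F}}$; a separate $2$-primary argument via the integral statement of Suslin's stability theorem promotes this to an integral map. Surjectivity of $\bar\sigma$ onto $\bl{F}$ is inherited from the algebraically closed case (Dupont--Sah): expressing $F$ as a filtered colimit of finitely generated subfields and embedding each into $\bar F$, every generator $\gpb{x} \in \bl{F}$ is lifted via a compatible system of transfers on $\pb{}$ and on $\kind{}$.

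For the kernel, extension of scalars to $\bar F$ is injective on the target because $\kind{\bar F} \cong \bl{\bar F}$, so the kernel is controlled by the $\gal{\bar F}{F}$-action on $\kind{\bar F} \cong \Q/\Z(2)$. Computing this via a Hochschild--Serre spectral sequence for $\gal{\bar F}{F}$ acting on the relevant Tate twist, and invoking Merkurjev--Suslin to identify the étale contributions, one extracts a kernel that, up to $2$-torsion, is isomorphic to $\Tor{\mu_F}{\mu_F}$ (an incarnation of $\ho{1}{\gal{\bar F}{F}}{\mu_\infty^{\otimes 2}}$).

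The main obstacle is the final step: showing that the resulting extension is the unique \emph{nontrivial} extension of $\Tor{\mu_F}{\mu_F}$ by $\Z/2$, rather than the split one. I would pin this down by a direct computation on a small case --- for instance a finite field $\F{q}$, where $\bl{\F{q}}$ is given by Lemma \ref{lem:pfin} and $\kind{\F{q}}$ is accessible from Quillen's calculation, or $F = \R$ with $\Tor{\mu_\R}{\mu_\R} = \Z/2$ and $\kind{\R}$ known --- so that comparing orders forces the extension class to be nontrivial. Naturality in $F$ then propagates the conclusion to every field via the canonical map from a prime subfield.
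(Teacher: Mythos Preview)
The paper does not prove this theorem; immediately after the statement it simply cites Suslin \cite{sus:bloch} for infinite fields and \cite{hut:cplx13} for finite fields. So there is no ``paper's own proof'' to compare against --- this is a quoted background result.

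That said, your reconstruction of Suslin's argument has some genuine gaps. First, your justification that the image of $\sigma$ lies in $\bl{F}$ is wrong: $\lambda$ takes values in $\asym{2}{\Z}{F^\times}$, not in $K$-theory, and the element $(1-x)\asymm x$ is typically \emph{nonzero} there (its vanishing is exactly what defines $\bl{F}$). The Steinberg relation in $K_2$ is irrelevant. In Suslin's actual argument, the composite $\hoz{3}{\spl{2}{F}}\to\pb{F}\xrightarrow{\lambda}\asym{2}{\Z}{F^\times}$ vanishes because $\lambda$ is (essentially) a differential in the spectral sequence and the image of $\hoz{3}{}$ lands in a filtration piece that survives to $E_\infty$.

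Second, your surjectivity argument does not work: there is no transfer map for the infinite extension $\bar F/F$, and writing $F$ as a colimit of finitely generated subfields does not help, since those subfields do not contain $\bar F$. Suslin's surjectivity is proved directly from the spectral sequence, not by descent from the algebraically closed case.

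Third, invoking the isomorphism (\ref{iso}) to factor $\sigma$ through $\kind{F}$ is anachronistic and possibly circular: that isomorphism is Mirzaii's result \cite{mirzaii:third}, which postdates and relies on the circle of ideas around Suslin's theorem. Suslin works instead with $\hoz{3}{\gl{2}{F}}$ and its relation to $K_3(F)$ via the plus-construction, and extracts the map to $\kind{F}$ from that.

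Your plan for pinning down the nontriviality of the $\Z/2$-extension by computing a single example and propagating by naturality is reasonable in spirit, and close to how it is actually done.
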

 (See 
Suslin \cite{sus:bloch} for infinite fields and \cite{hut:cplx13} for finite fields.)

\subsection{The refined scissors congruence group}
For a field $F$ with at least $4$ elements, $\rpb{F}$ is defined to be the 
  $\sgr{F}$-module with generators $\gpb{x}$, $x\in F^\times$ 
subject to the relations 
\[
S_{x,y}:\quad 0=\gpb{x}-\gpb{y}+\an{x}\gpb{ \frac{y}{x}}-\an{x^{-1}-1}\gpb{\frac{1-x^{-1}}{1-y^{-1}}}
+\an{1-x}\gpb{\frac{1-x}{1-y}},\quad x,y\not= 1.
\]

Of course, from the definition it follows immediately that 
\[
\pb{F}=(\rpb{F})_{\sq{F}}=\ho{0}{\sq{F}}{\rpb{F}}.
\]

Let  $\bw= (\lambda_1,\lambda_2)$ 
be the $\sgr{F}$-module homomorphism 
\[
\rpb{F}\to \aug{F}^2\oplus\asym{2}{\Z}{F^\times}
\]
where 
$\lambda_1:\rpb{F}\to \aug{F}^2$ is the map $\gpb{x}\mapsto \pf{1-x}\pf{x}$, and $\lambda_2$ is the composite
\[
\xymatrix{
\rpb{F}\ar@{>>}[r]
&\pb{F}\ar[r]^-{\lambda}
&\asym{2}{\Z}{F^\times}.
}
\] 

It can be shown that $\bw$ is well-defined. 

The \emph{refined scissors congruence group} of $F$ (when $F$ has at least $4$ elements)  is the $\sgr{F}$-module
$
\rpbker{F}:=\ker{\lambda_1}
$.

The  
\emph{refined Bloch group} of the field $F$ (with at least $4$ elements) to be the $\sgr{F}$-module 
\begin{eqnarray*}
\rbl{F}:&=&\ker{\bw: \rpb{F}\to \aug{F}^2\oplus\asym{2}{\Z}{F^\times}}\\
&=&\ker{\lambda_2:\rpbker{F}\to \asym{2}{\Z}{F^\times}}.\\
\end{eqnarray*}

We can also define appropriate notions for the fields with $2$ and $3$ elements as follows:

$\pb{\F{2}}=\rpb{\F{2}}=\rbl{\F{2}}$ 
is simply an additive group of order $3$ with distinguished generator, denoted $\bconst{\F{2}}$.

$\rpb{\F{3}}$ is the cyclic $\sgr{\F{3}}$-module generated by the symbol $\gpb{-1}$ and subject to the one relation
\[
0=2\cdot(\gpb{-1}+\an{-1}\gpb{-1}).
\] 
$\pb{\F{3}}=\ho{0}{\F{3}^\times}{\rpb{\F{3}}}$ is then cyclic of order $4$ generated by the symbol $\gpb{-1}$.  $\rbl{\F{3}}$ is the submodule of order $2$ in $\rpb{\F{3}}$ generated by
$\gpb{-1}+\an{-1}\gpb{-1}$.

The symbol $\gpb{1}$ continues to denote $0$ in $\rpb{\F{2}}$ and $\rpb{\F{3}}$. 

We recall some results from \cite{hut:cplx13}: The main result there is 

\begin{thm}\label{thm:rbl} Let $F$ be any field.

There is a natural complex 
\[
0\to \Tor{\mu_F}{\mu_F}\to\ho{3}{\spl{2}{F}}{{\Z}}\to{\rbl{F}}\to 0
\]
which is exact everywhere except possibly at the middle term. The middle homology is annihilated by $4$.

In particular, for any field there is a natural short exact sequence
\[
0\to\zhalf{\Tor{\mu_F}{\mu_F}}\to\ho{3}{\spl{2}{F}}{\zhalf{\Z}}\to\zhalf{\rbl{F}}\to 0.
\]
\end{thm}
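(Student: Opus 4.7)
The approach I would follow is the template established by Dupont--Sah and Suslin for relating $\kind{F}$ to the scissors congruence group, refined to track the action of $\sq{F}$ on $\hoz{3}{\spl{2}{F}}$ coming from conjugation by $\gl{2}{F}$.

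The key tool is the hyperhomology spectral sequence
\[
E^1_{p,q}= \bigoplus_{\mathrm{orbits}}\hoz{q}{\mathrm{Stab}_{\spl{2}{F}}(\sigma_p)}\ \Longrightarrow\ \hoz{p+q}{\spl{2}{F}}
\]
associated to the action of $\spl{2}{F}$ on the chain complex whose $p$-chains are freely generated by ordered $(p+1)$-tuples of distinct points of $\pn{1}{F}$. For $F$ with sufficiently many elements this complex provides a suitable resolution of $\Z$ in low degrees. By the classical transitivity properties of $\spl{2}{F}$ on small tuples combined with Shapiro's lemma, the columns $p=0,1,2$ identify respectively with the homology of the Borel $B$, of the diagonal torus $T\cong F^\times$, and of a copy of $\mu_2(F)$, while the stabilizer of a generic $4$-tuple contains $\mu_F$.

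The heart of the argument is a bookkeeping computation of the $d_1$- and $d_2$-differentials in low total degree. Using the Lyndon--Hochschild--Serre spectral sequence for $B=U\rtimes T$, K\"unneth for $T$, and the $\sq{F}$-equivariance produced by conjugation inside $\gl{2}{F}$, the plan is to identify the cokernel of a specific $d_1$ with the $\sgr{F}$-module defined by the generators $\gpb{x}$ and relations $S_{x,y}$, i.e.\ with $\rpb{F}$; the further differentials then realize $\lambda_1$ and $\lambda_2$ and thus cut out $\rbl{F}$ as the relevant $E^\infty$-term on the diagonal $p+q=3$, producing the surjection $\hoz{3}{\spl{2}{F}}\twoheadrightarrow \rbl{F}$. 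The contribution of $\Tor{\mu_F}{\mu_F}$ will arise from lower columns via the interaction of $\ho{2}{T}{\Z}=\extpow{F^\times}$ with the cyclic stabilizers of generic triples, producing the Tor group as the kernel of the edge map.

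The main obstacle will be controlling the middle homology of the resulting complex. The spectral sequence leaves residual ambiguity coming from higher differentials and from extensions among the $E^\infty$-graded pieces, and does not immediately guarantee exactness at the middle term. To bound this ambiguity by $4$ I would exploit the two $2$-torsion symmetries available: the central element $-I\in\spl{2}{F}$ and conjugation by the order-$4$ matrix $w=\matr{0}{1}{-1}{0}$. Together these should force any residual obstruction to be $2$-torsion of $2$-torsion and hence killed by $4$. The fields $\F{2}$ and $\F{3}$, where the configuration complex is too small for the general spectral sequence argument, need separate direct computations matched to the ad hoc definitions of $\rpb{\F{q}}$ and $\rbl{\F{q}}$ given in Section \ref{sec:rsc}. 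Once the complex and the $4$-torsion bound are in place, the final statement over $\zhalf{\Z}$ is immediate, since inverting $2$ kills the middle homology and preserves both the kernel and cokernel terms.
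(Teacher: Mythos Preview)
This theorem is not proved in the present paper; it is quoted from \cite{hut:cplx13} (the paper introduces it with ``We recall some results from \cite{hut:cplx13}: The main result there is\ldots''). So there is no proof here to compare against, only a citation.

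That said, your outline is essentially the strategy of \cite{hut:cplx13}: analyse the hyperhomology spectral sequence for $\spl{2}{F}$ acting on the complex of configurations in $\projl{F}$, identify the low columns via Shapiro, recover $\rpb{F}$ as an $E^2$-term with the maps $\lambda_1,\lambda_2$ arising as differentials, and read off $\rbl{F}$ and the $\Tor{\mu_F}{\mu_F}$ contribution on the $p+q=3$ diagonal. The separate treatment of $\F{2}$ and $\F{3}$ is also necessary, as you note.

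Two points to tighten. First, the stabiliser in $\spl{2}{F}$ of any $(p+1)$-tuple of distinct points with $p\geq 2$ is the centre $\mu_2(F)$, not $\mu_F$; your remark that ``the stabilizer of a generic $4$-tuple contains $\mu_F$'' is not where $\Tor{\mu_F}{\mu_F}$ enters. That term comes from the torus column (the homology of $F^\times$) and the resulting $E^\infty$-filtration on $\hoz{3}{\spl{2}{F}}$, not from a larger point stabiliser. Second, the heuristic ``$2$-torsion of $2$-torsion'' for the $4$-bound is not an argument. In \cite{hut:cplx13} the annihilation by $4$ comes from an explicit analysis of the relevant differentials and extension problems, and from comparison with the Milnor--Witt/Grothendieck--Witt side; invoking $-I$ and $w$ alone will not produce the bound without that work. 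Your sketch is a reasonable blueprint, but these two steps are where the actual content of the cited paper lies.
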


\subsection{Scissors congruence groups  and $
\hot{F}{\Z}$}
In \cite{sus:bloch} Suslin defines the elements $\sus{x}:=\sgpb{x}+\sgpb{x^{-1}}\in \pb{F}$ and 
shows that they 
satisfy 
\[
\sus{xy}=\sus{x}+\sus{y} \mbox{ and }     2\sus{x}=0  \forall  x,y\in F^\times.
\]
In particular, $\sus{x}=0$ in $\zhalf{\pb{F}}$.

There are two natural liftings of these elements to $\rpb{F}$: given $x\in F^\times$ we define 
\[
\suss{1}{x}:=\sgpb{x}+\an{-1}{\sgpb{x^{-1}}}
\]
and 
\[
\suss{2}{x}:=\left\{
\begin{array}{ll}
\an{1-x}\left(\an{x}\sgpb{x}+\sgpb{x^{-1}}\right),& x\not= 1\\
0,& x=1
\end{array}
\right.
\]

(If $F=\F{2}$, we interpret this as $\suss{i}{1}=0$ for $i=1,2$. For $F=\F{3}$, we have $\suss{1}{-1}=\suss{2}{-1}
=\gpb{-1}+\an{-1}\gpb{-1}$. )
 
The maps $F^\times \to \rpb{F}, x \mapsto \suss{i}{x}$ are $1$-cocyles:   $\suss{i}{xy}=\an{x}\suss{i}{y}+\suss{i}{x}$ for all $x,y\in F^\times$. (See \cite[Section 3]{hut:rbl11}). In general, the elements 
$\psi_i(x)$ have infinite order however. 

We define $\qrpb{F}$ to be $\rpb{F}$ modulo the submodule generated by the elements $\suss{1}{x}$, $x\in F^\times$. Likewise, $\qpb{F}$ is the group $\pb{F}$ modulo the subgroup generated by the 
elements  $\sus{x}$, $x\in F^\times$. Note that since the elements $\sus{x}$ are annihilated by $2$, we have $\zhalf{\pb{F}}=\zhalf{\qpb{F}}$. 

For any field there is natural homomorphism of $\sgr{F}$-modules $\ho{3}{\spl{2}{F}}{\Z}\to\rpb{F}$ and we have (\cite[Corollary 2.8, Corollary 4.4]{hut:sl2dv}):

\begin{thm}\label{thm:rpbp}
For any field $F$,  the map $\ho{3}{\spl{2}{F}}{\Z}\to\rpb{F}$ induces an isomorphism of $\sgr{F}$-modules
\[
\hot{F}{\zhalf{\Z}}=\aug{F}\ho{3}{\spl{2}{F}}{\zhalf{\Z}}\cong \aug{F}\zhalf{\rpbker{F}}
\]
and furthermore
\[
\zhalf{\rpbker{F}}= \zhalf{\qrpbker{F}}=
\ep{-1} \zhalf{\qrpb{F}}
\]
where $\ep{-1}$ denotes the idempotent $(1+\an{-1})/2\in \zhalf{\sgr{F}}$. 
\end{thm}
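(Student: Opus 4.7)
The plan is to deduce the result from Theorem \ref{thm:rbl} together with a careful analysis of the $\sq{F}$-action on the various modules in play. After inverting $2$, Theorem \ref{thm:rbl} provides the short exact sequence
\[
0 \to \zhalf{\Tor{\mu_F}{\mu_F}} \to \zhalf{\ho{3}{\spl{2}{F}}{\Z}} \to \zhalf{\rbl{F}} \to 0
\]
of $\sgr{F}$-modules. The $\sq{F}$-action on $\Tor{\mu_F}{\mu_F}$ is trivial, so $\aug{F}$ annihilates this subspace; moreover Theorem \ref{thm:suslin}, combined with the identification $\zhalf{\kind{F}} \cong \zhalf{\ho{3}{\spl{2}{F}}{\Z}}_{\sq{F}}$ from \eqref{iso}, implies that $\zhalf{\Tor{\mu_F}{\mu_F}}$ injects into the coinvariants. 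Hence $\aug{F}\zhalf{\ho{3}{\spl{2}{F}}{\Z}} \cap \zhalf{\Tor{\mu_F}{\mu_F}} = 0$, giving $\aug{F}\zhalf{\ho{3}{\spl{2}{F}}{\Z}} \cong \aug{F}\zhalf{\rbl{F}}$.

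To pass from $\rbl{F}$ to $\rpbker{F}$, observe that the quotient $\rpbker{F}/\rbl{F}$ embeds in $\asym{2}{\Z}{F^\times}$ on which $\sq{F}$ acts trivially, so $\aug{F}\rpbker{F} \subseteq \rbl{F}$. Using the idempotent decomposition afforded by $\ep{-1}$ and $\epm{-1}$ after inverting $2$, together with the observation that $\epm{-1}$ acts as $0$ on any module with trivial $\an{-1}$-action, one sees $\epm{-1}\zhalf{\rpbker{F}} = \epm{-1}\zhalf{\rbl{F}}$. A parallel argument on the $\ep{-1}$-component using the generators $\ep{-1}\pf{r} = (\pf{r} + \an{-1}\pf{-r})/2$ of $\ep{-1}\aug{F}[1/2]$ then yields $\aug{F}\zhalf{\rpbker{F}} = \aug{F}\zhalf{\rbl{F}}$, completing the first isomorphism of the theorem.

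For the remaining equalities, $\zhalf{\rpbker{F}} = \zhalf{\qrpbker{F}}$ requires the intersection $\rpbker{F} \cap \langle \suss{1}{x} : x \in F^\times\rangle$ to be $2$-primary; this can be checked by computing $\lambda_1(\suss{1}{x}) = \pf{1-x}\pf{x} + \an{-1}\pf{1-x^{-1}}\pf{x^{-1}}$ explicitly and analyzing the $\sgr{F}$-linear relations among these terms via the defining relations $S_{x,y}$. The equality $\zhalf{\qrpbker{F}} = \ep{-1}\zhalf{\qrpb{F}}$ splits into two inclusions: first, that $\an{-1}$ acts as $+1$ on $\zhalf{\qrpbker{F}}$, which should follow by specializing $y = -1$ in $S_{x,y}$ together with the cocycle identity for $\suss{1}$; and second, that every element $\ep{-1}\xi \in \zhalf{\qrpb{F}}$ lifts to an element of $\rpbker{F}$ modulo $\langle \suss{1}\rangle$. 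The main obstacle I anticipate is this last inclusion: exhibiting a canonical representative of $\ep{-1}\xi$ inside $\rpbker{F}$ up to $\suss{1}$-terms requires a delicate manipulation of the Steinberg-type relations $S_{x,y}$ combined with both liftings $\suss{1}$ and $\suss{2}$ of Suslin's elements, and will likely rest on a specific identity relating $\ep{-1}\lambda_1(\gpb{x})$ to $\lambda_1$-values of $\suss{1}$-terms after inverting $2$.
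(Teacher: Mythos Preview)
The paper does not prove this theorem; it is quoted from the author's earlier work \cite[Corollary~2.8, Corollary~4.4]{hut:sl2dv}, so there is no in-paper proof to compare against. What you have written is a sketch with several acknowledged gaps rather than a proof, and some of those gaps are genuine.

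For the first isomorphism, your opening move is sound: applying $\aug{F}$ to the short exact sequence of Theorem~\ref{thm:rbl} (after inverting $2$) and using that $\aug{F}$ kills $\zhalf{\Tor{\mu_F}{\mu_F}}$ gives a surjection $\aug{F}\ho{3}{\spl{2}{F}}{\zhalf{\Z}}\to \aug{F}\zhalf{\rbl{F}}$, and the injectivity follows from the compatibility of Suslin's sequence with the coinvariant map (which you invoke but do not verify). However, your passage from $\rbl{F}$ to $\rpbker{F}$ is not complete. You correctly note $\aug{F}\rpbker{F}\subset\rbl{F}$, but the ``parallel argument on the $\ep{-1}$-component'' that you allude to does not obviously yield $\aug{F}\zhalf{\rpbker{F}}=\aug{F}\zhalf{\rbl{F}}$: unlike the $\epm{-1}$-part, the $\ep{-1}$-part of the quotient $\rpbker{F}/\rbl{F}$ need not vanish, so there is something real to check here, and what you have written does not do it.

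For the ``furthermore'' statement you are candid that you do not have a proof. The identity $\zhalf{\rpbker{F}}=\zhalf{\qrpbker{F}}$ is not simply the $2$-primariness of an intersection: one also needs that $\lambda_1$ descends well to $\qrpb{F}$ and that nothing is lost in the passage. The equality $\zhalf{\qrpbker{F}}=\ep{-1}\zhalf{\qrpb{F}}$ is the substantive content, and your correct instinct is that it rests on nontrivial identities among the $\suss{i}$ (indeed, in \cite{hut:sl2dv} this is established via an explicit analysis of the $\epm{-1}$-component of $\zhalf{\qrpb{F}}$ and its relation to Milnor--Witt $K$-theory, not by the kind of direct lifting argument you outline). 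As written, this portion of your proposal is a plan of attack rather than an argument.
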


Note that it follows that the square class $\an{-1}$ acts trivially on $\ho{3}{\spl{2}{F}}{\zhalf{\Z}}$.

To simplify the right-hand side we define the module $\rpbp{F}$ to be $\qrpb{F}$ modulo the submodule generated by the elements $(1-\an{-1})\gpb{x}$, $x\in F^\times$. Thus 
$\rpbp{F}$ is the $\sgr{F}$-module generated by the the elements $\gpb{x},x\in F^\times$ subject to the relations 
\begin{enumerate}
\item $\gpb{1}=0$
\item $S_{x,y}=0$ for $x,y\not=1$
\item $\an{-1}\gpb{x}=\gpb{x}$ for all $x$.
\item $\sgpb{x^{-1}}=-\gpb{x}$  for all $x$
\end{enumerate}

The theorem then says that the 
map $\ho{3}{\spl{2}{F}}{\Z}\to\rpb{F}$ induces an isomorphism 
\[
\hot{F}{\zhalf{\Z}}\cong \aug{F}\zhalf{\rpbp{F}}. 
\]

Note that the natural map $\rpbp{F}\to \qpb{F}$ induces an isomorphism $\rpbp{F}_{F^\times}\cong \qpb{F}$. Furthermore, the results of \cite[Section 7]{hut:cplx13} immediately imply that 
$k^\times$ acts trivially on $\rpbp{k}$ when $k$ is a finite field. Thus $\rpbp{k}=\qpb{k}$ for a finite field $k$.  

\subsection{Some algebra in $\rpb{F}$}
 For any field $F$ the elements $C(x):=\gpb{x}+\an{-1}\gpb{1-x}+\pf{1-x}\suss{1}{x}\in \rpb{F}$, $x\in F^\times\setminus\{ 1\}$ can be shown to be independent of 
$x$  (see \cite[Section 3.2]{hut:rbl11}). We denote this constant, as well as its image in any quotient of $\rpb{F}$,  by $\bconst{F}$. 

We review some of the fundamental properties of the element $\bconst{F}\in \rpb{F}$ (for proofs see \cite[Section 3.2]{hut:rbl11}). 
\begin{prop}\label{prop:bconst}
The element $\bconst{F}\in \rpb{F}$ has the following properties:
\begin{enumerate}
\item $3\cdot\bconst{F}=\suss{1}{-1}$  and $6\cdot \bconst{F}=0$. 
\item $2\pf{x}\bconst{F}=\suss{1}{x}-\suss{2}{x}$ for all $x\in F^\times$.
\item $2\cdot \bconst{F}=0$ if and only if $T^2-T+1$ splits in $F$. 
\end{enumerate}
\end{prop}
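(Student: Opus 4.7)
The plan relies on three ingredients: the independence of $C(x)$ from $x$, so that $\bconst{F}$ may be evaluated at any convenient argument; the cocycle identities $\suss{i}{xy}=\an{x}\suss{i}{y}+\suss{i}{x}$ satisfied by both $\suss{1}$ and $\suss{2}$; and the defining five-term relations $S_{x,y}$ of $\rpb{F}$. As a useful preliminary, I would establish $2\suss{1}{-1}=0$: from the cocycle property one has $0=\suss{1}{1}=\suss{1}{(-1)(-1)}=(1+\an{-1})\suss{1}{-1}$, and combining this with $\suss{1}{-1}=(1+\an{-1})\gpb{-1}$ together with the identity $(1+\an{-1})^2=2(1+\an{-1})$ in $\sgr{F}$ yields the claim.

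For (1), I would sum the three values $C(x_1),C(x_2),C(x_3)$ taken over the order-three orbit $x_1=x$, $x_2=1-x^{-1}$, $x_3=(1-x)^{-1}$, whose product is $-1$. A useful observation is that $1-x_i=x_{i-1}^{-1}$ (indices taken modulo $3$), so the six $\gpb{\cdot}$-terms arising from the first two summands of each $C$ reorganise as $\sum_i\suss{1}{x_i}$. Combined with the three $\pf{1-x_i}\suss{1}{x_i}$-terms this gives $3\bconst{F}=\sum_i\an{1-x_i}\suss{1}{x_i}$, and repeated application of the cocycle law and of the derived identity $\suss{1}{y^{-1}}=-\an{y^{-1}}\suss{1}{y}$ should collapse the right-hand side to $\suss{1}{-1}$. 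The preliminary then yields $6\bconst{F}=0$.

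For (2), I would expand $2\pf{x}\bconst{F}=2(\an{x}-1)C(x)$ directly and compare with the explicit formula
\[
\suss{1}{x}-\suss{2}{x}=(1-\an{x(1-x)})\gpb{x}+(\an{-1}-\an{1-x})\gpb{x^{-1}}.
\]
Both sides are $\sgr{F}$-linear combinations of $\gpb{x}$, $\gpb{x^{-1}}$ and $\gpb{1-x}$; matching coefficients will require one nontrivial identification of the $\gpb{1-x}$-contribution, which should be achieved by applying $S_{x,y}$ with $y=x^{-1}$ (rewriting $\gpb{1-x}$ modulo $\gpb{x}$ and $\gpb{x^{-1}}$ up to $\suss{i}$-terms), after which the remaining coefficients align.

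For (3), the direction $(\Leftarrow)$ is transparent: if $\omega^2-\omega+1=0$ with $\omega\in F$ then $1-\omega=\omega^{-1}$ and $\an{\omega^{-1}}=\an{\omega}$, so evaluating $C$ at $x=\omega$ collapses to $\bconst{F}=\an{\omega}\suss{1}{\omega}$, and iterating the cocycle along the powers of $\omega$ (using $\omega^6=1$ and $\suss{1}{1}=0$) forces $2\bconst{F}=0$. The reverse direction is the main obstacle: one needs an $\sgr{F}$-equivariant homomorphism out of $\rpb{F}$ detecting $2\bconst{F}\neq 0$ whenever $T^2-T+1$ is irreducible over $F$. I expect to argue via the projection $\rpb{F}\to\pb{F}$, under which $\bconst{F}$ becomes the classical Dupont--Sah constant $\gpb{x}+\gpb{1-x}$ whose order is known to equal $6$ precisely when $F$ lacks a primitive sixth root of unity; lifting this classical fact back to $\rpb{F}$, likely via base change to $F(\omega)$ and a Galois descent or transfer argument, should close the proof.
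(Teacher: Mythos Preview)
The paper does not prove this proposition; immediately before the statement it writes ``for proofs see \cite[Section 3.2]{hut:rbl11}'' and moves on. So there is no in-paper argument to compare against, and your proposal stands as an independent attempt.

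On its own merits: your strategies for (1) and (2) are sound in outline. The orbit $\{x,1-x^{-1},(1-x)^{-1}\}$ with the observation $1-x_i=x_{i-1}^{-1}$ is exactly what one needs for (1), and the cocycle collapse to $\suss{1}{-1}$ goes through. For (2), expanding $2(\an{x}-1)C(x)$ and reducing via a five-term relation is the right idea, though $S_{x,x^{-1}}$ alone may not be the cleanest choice; you might find $S_{x,1-x}$ (or the relation linking $\gpb{x},\gpb{1-x},\gpb{x^{-1}}$) more direct.

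There is a genuine gap in your $(\Leftarrow)$ argument for (3). From $\bconst{F}=\an{\omega}\suss{1}{\omega}$ and the cocycle along powers of $\omega$ you only extract $3(1+\an{\omega})\suss{1}{\omega}=0$ or $\suss{1}{-1}=(2+\an{\omega})\suss{1}{\omega}$, neither of which gives $2\bconst{F}=0$ on its own. The missing step is to exploit constancy a second time: since $\omega^{-1}$ is the other root of $T^2-T+1$, evaluate $C(\omega^{-1})=\an{\omega^{-1}}\suss{1}{\omega^{-1}}=-\suss{1}{\omega}$ and equate with $C(\omega)=\an{\omega}\suss{1}{\omega}$ to obtain $(1+\an{\omega})\suss{1}{\omega}=0$. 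This forces $\suss{1}{-1}=\suss{1}{\omega}$, and then your preliminary $2\suss{1}{-1}=0$ finishes the job. For $(\Rightarrow)$, your reduction to the order of the Dupont--Sah constant in $\pb{F}$ is correct in principle (since $\rpb{F}\to\pb{F}$ is surjective), but ``lifting this classical fact back'' is unnecessary: if $2\bconst{F}=0$ in $\rpb{F}$ then already $2\bconst{F}=0$ in $\pb{F}$, contradicting the classical order-$6$ result directly. You should cite that result rather than gesture at a descent argument.
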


\begin{cor}\label{cor:bconst} For any field $F$, we have $3\cdot \bconst{F}=0$ in $\qrpb{F}$ and 
\[
\pf{x}\bconst{F}= \suss{2}{x}= \an{x-1}\pf{-x}\gpb{x} \mbox{ in } \qrpb{F}
\]
for all $x\in F^\times\setminus\{ 1\}$.
\end{cor}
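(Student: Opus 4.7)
Both assertions should follow directly from Proposition \ref{prop:bconst} together with the definition of $\qrpb{F}$ as the quotient of $\rpb{F}$ by the submodule generated by the $1$-cocycle values $\suss{1}{x}$, $x\in F^\times$.

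First I would establish $3\cdot \bconst{F}=0$ in $\qrpb{F}$ immediately from Proposition \ref{prop:bconst}(1): the identity $3\cdot\bconst{F}=\suss{1}{-1}$ holds in $\rpb{F}$, and the right-hand side is killed by construction in $\qrpb{F}$. In particular $3\pf{x}\bconst{F}=0$ in $\qrpb{F}$ for every $x\in F^\times$.

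Next, for the identity $\pf{x}\bconst{F}=\suss{2}{x}$ in $\qrpb{F}$, I would reduce Proposition \ref{prop:bconst}(2), $2\pf{x}\bconst{F}=\suss{1}{x}-\suss{2}{x}$, modulo the $\suss{1}{\cdot}$ relations to obtain $2\pf{x}\bconst{F}=-\suss{2}{x}$ in $\qrpb{F}$. Combining with $3\pf{x}\bconst{F}=0$ from the previous paragraph gives $\pf{x}\bconst{F}=(-2+3)\pf{x}\bconst{F}=-\suss{2}{x}+0=\suss{2}{x}$ (up to sign bookkeeping, which I expect to work out cleanly since everything in sight is $3$-torsion after killing the $\suss{1}{\cdot}$).

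For the last equality $\suss{2}{x}=\an{x-1}\pf{-x}\gpb{x}$ in $\qrpb{F}$, I would expand the defining formula $\suss{2}{x}=\an{1-x}(\an{x}\gpb{x}+\gpb{x^{-1}})$ and use the relation $\suss{1}{x}=\gpb{x}+\an{-1}\gpb{x^{-1}}=0$, which gives $\gpb{x^{-1}}=-\an{-1}\gpb{x}$, to eliminate $\gpb{x^{-1}}$. Substituting and using multiplicativity of square classes, $\an{1-x}\an{x}=\an{x-x^2}$ and $\an{1-x}\an{-1}=\an{x-1}$, yields
\[
\suss{2}{x}=\bigl(\an{x-x^2}-\an{x-1}\bigr)\gpb{x}=\an{x-1}\bigl(\an{-x}-1\bigr)\gpb{x}=\an{x-1}\pf{-x}\gpb{x},
\]
as required, where the middle step uses $\an{x-1}\an{-x}=\an{-x(x-1)}=\an{x-x^2}$.

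None of these steps looks seriously obstructive; the potential pitfalls are purely notational, namely keeping track of signs in the third-power manipulation in the second step and correctly bookkeeping square classes of $-1$, $x$, $1-x$ in the third step. The argument uses nothing beyond Proposition \ref{prop:bconst} and the defining relations of $\qrpb{F}$.
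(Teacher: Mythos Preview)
Your proposal is correct and follows essentially the same route as the paper: deduce $3\bconst{F}=0$ from Proposition~\ref{prop:bconst}(1), combine this with Proposition~\ref{prop:bconst}(2) modulo the $\suss{1}{\cdot}$ relations to get $\pf{x}\bconst{F}=\suss{2}{x}$, and then rewrite $\suss{2}{x}$ by substituting $\gpb{x^{-1}}=-\an{-1}\gpb{x}$. One small slip: in your second step the chain should read $\pf{x}\bconst{F}=(-2+3)\pf{x}\bconst{F}=\suss{2}{x}+0$, since $-2\pf{x}\bconst{F}=+\suss{2}{x}$ (not $-\suss{2}{x}$); your final answer and your caveat about sign bookkeeping are both right.
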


\begin{proof} $3\cdot\bconst{F}=0$ in $\qrpb{F}$ since $\suss{1}{-1}=0$ in $\qrpb{F}$. Thus $ -\pf{x}\bconst{F}=2\cdot \pf{x} \bconst{F}=-\suss{2}{x}$ since 
$\suss{1}{x}=0$ in $\qrpb{F}$. Furthermore, in $\qrpb{F}$ we have 
\[
0=\suss{1}{x}=\gpb{x}+\an{-1}\gpb{x^{-1}} \implies \gpb{x^{-1}}=-\an{-1}\gpb{x}
\]
and hence 
\begin{eqnarray*}
\suss{2}{x}&=& \an{1-x}\left( \an{x}\gpb{x}+\gpb{x^{-1}}\right)\\
&=& \an{1-x}\left(\an{x}-\an{-1}\right)\gpb{x}\\
&=& \an{x-1}\pf{-x}\gpb{x}.\\
\end{eqnarray*}
\end{proof}

Observe that in $\qrpb{F}$ we have $\bconst{F}=\gpb{x}+\an{-1}\gpb{1-x}$ since $\suss{1}{x}=0$, and in $\rpbp{F}$ we have $\bconst{F}=\gpb{x}+\gpb{1-x}$ since 
$\an{-1}$ acts trivially by definition on $\rpbp{F}$.

It will be convenient below to introduce the following additional notation in $\rpbp{F}$: 
\[
\gpb{0}:= \bconst{F}\mbox{ and } \gpb{\infty}:=-\bconst{F}.  
\]
With this notation, we then have 
\[
\bconst{F}=\gpb{x}+\gpb{1-x} \mbox{ and } \suss{1}{x}=0 \mbox{ for all }x\in \projl{F}.
\]
\subsection{A character-theoretic local-global principle} 
We will use the following character-theoretic principles: 

Let $G$ be an abelian group satisfying $g^2=1$ for all $g\in G$. Let $\mathcal{R}$ denote the group ring ${\Z}[G]$. 

For a character $\chi\in \dual{G}:=\hmz{G}{\mu_2}$, 
let $\upp{\mathcal{R}}{\chi}$ be the ideal of 
$\mathcal{R}$ generated
by the elements $\{ g-\chi(g)\ |\ g\in G\}$. In other words $\upp{\mathcal{R}}{\chi}$ is the 
kernel of the ring 
homomorphism $\rho(\chi):\mathcal{R}\to \Z$ sending $g$ to $\chi(g)$ for any $g\in G$. 
We let $\dwn{\mathcal{R}}{\chi}$ denote the associated $\mathcal{R}$-algebra structure on $\Z$; i.e. 
$\dwn{\mathcal{R}}{\chi}:= \mathcal{R}/\upp{\mathcal{R}}{\chi}$. 

If $M$ is an $\mathcal{R}$-module, we let $\upp{M}{\chi}=
\upp{\mathcal{R}}{\chi}M $ and we let 
\[
\dwn{M}{\chi}:=M/\upp{M}{\chi}=
\left(\mathcal{R}/\upp{\mathcal{R}}{\chi}\right)\otimes_{\mathcal{R}} M
=\dwn{\mathcal{R}}{\chi}\otimes_{\mathcal{R}}  M.
\]

Thus $\dwn{M}{\chi}$ is the largest quotient module of $M$ with the property that 
$g\cdot m =\chi(g)\cdot m$ for all $g\in G$.

In particular, if $\chi=\chi_0$, the trivial character,
 then $\upp{\mathcal{R}}{\chi_0}$ is the augmentation ideal $\aug{G}$, 
$\upp{M}{\chi_0}=\aug{G}M$  and $\dwn{M}{\chi_0}=M_{G}$.

Given $m\in M$, $\chi\in \dual{G}$, we denote the image of $m$ in $\dwn{M}{\chi}$ by $\dwn{m}{\chi}$. For example, for any character $\chi\in \dual{F^\times/(F^\times)^2}$, we can give a presentation of the $\sgr{F}$-module $\dwn{\rpbp{F}}{\chi}$, which is our main object of study below, as follows:  It is the $\sgr{F}$-module with generators $\dwn{\gpb{x}}{\chi}$, $x\in F^\times$, subject to the relations 
\begin{enumerate}
\item $\an{a}\cdot \dwn{\gpb{x}}{\chi}:= \chi(a)\dwn{\gpb{x}}{\chi}$ for all $a,x\in F^\times$
\item $\dwn{\gpb{1}}{\chi}=0$
\item 
\[
0=\dwn{\gpb{x}}{\chi}-\dwn{\gpb{y}}{\chi}+\chi(x)\dwn{\gpb{ \frac{y}{x}}}{\chi}-\chi(x^{-1}-1)\dwn{\gpb{\frac{1-x^{-1}}{1-y^{-1}}}}{\chi}
+\chi(1-x)\dwn{\gpb{\frac{1-x}{1-y}}}{\chi}
\]
for all $x,y\not=1$
\item $\chi(-1)\dwn{\gpb{x}}{\chi}=\dwn{\gpb{x}}{\chi}$ for all $x$, and 
\item $\dwn{\gpb{x}}{\chi}=-\dwn{\gpb{x^{-1}}}{\chi}$ for all $x$. 
\end{enumerate}

We will need the following result (\cite[Section 3]{hut:sl2dv})
\begin{prop}\label{prop:localglobal}\ \\
\begin{enumerate}
\item For any $\chi\in \dual{G}$, $M\to \dwn{M}{\chi}$ is an exact functor on the category of $\zhalf{\mathcal{R}}$-modules.
\item Let $f:M\to N$ be an $\zhalf{\mathcal{R}}$-module homomorphism. For any $\chi\in\dual{G}$, let $\dwn{f}{\chi}:\dwn{M}{\chi}\to \dwn{N}{\chi}$. Then 
$f$ is bijective (resp. injective, surjective) if and only if $\dwn{f}{\chi}$ is bijective (resp. injective, surjective)  for all $\chi\in \dual{G}$.
\end{enumerate}
\end{prop}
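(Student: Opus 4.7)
My plan is to prove (1) first, using the fact that $\zhalf{\Z}[H]$ is semisimple for every finite subgroup $H \subseteq G$, and then derive (2) via a four-term exact sequence argument combined with a classification of the maximal ideals of $\zhalf{\mathcal{R}}$. The key observation for (1) is that for any finite $H \subseteq G$ the orthogonal central idempotents $e_{\chi'}^H := \frac{1}{|H|}\sum_{h \in H}\chi'(h)\,h$, for $\chi' \in \dual{H}$, split $\zhalf{\Z}[H]$ as $\prod_{\chi' \in \dual{H}}\zhalf{\Z}$, and the kernel of the character homomorphism $\rho(\chi'):\zhalf{\Z}[H] \to \zhalf{\Z}$ is exactly $(1 - e_{\chi'}^H)\zhalf{\Z}[H]$. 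Any element of $\upp{M}{\chi}$ is a finite sum $\sum_i (g_i - \chi(g_i))m_i$ and therefore lies in $(1 - e_\chi^H)M$ for $H = \langle g_1, g_2, \ldots\rangle$. Right exactness of $\dwn{(-)}{\chi}$ is automatic since it is a tensor product, so I only need to check left exactness: if $M' \subseteq M$ is a submodule and $m' \in M' \cap \upp{M}{\chi}$, choose $H$ as above. Because $e_\chi^H$ is central in $\zhalf{\mathcal{R}}$ it preserves $M'$, so the relation $e_\chi^H m' = 0$ holds already inside $M'$, and hence $m' = (1 - e_\chi^H)m' \in \upp{M'}{\chi}$, as required.

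For (2), I apply the now-exact functor $\dwn{(-)}{\chi}$ to $0 \to \ker f \to M \to N \to \coker f \to 0$; this reduces the claim to a \emph{detection statement}: a $\zhalf{\mathcal{R}}$-module $K$ with $\dwn{K}{\chi} = 0$ for every $\chi$ must vanish. Suppose $m \in K$ is nonzero, and set $I = \mathrm{Ann}(m)$, so that $\zhalf{\mathcal{R}}/I \cong \zhalf{\mathcal{R}} m \subseteq K$ is a nonzero cyclic submodule to which part (1) passes the vanishing hypothesis. Unwinding the definitions gives $\dwn{(\zhalf{\mathcal{R}}/I)}{\chi} = \zhalf{\Z}/\rho(\chi)(I)$, so the hypothesis says $\rho(\chi)(I) = \zhalf{\Z}$, equivalently $I + \upp{\mathcal{R}}{\chi} = \zhalf{\mathcal{R}}$, for every $\chi \in \dual{G}$.

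The hardest step is the final one: classifying the maximal ideals of $\zhalf{\mathcal{R}}$ in order to rule out $I$ being proper. For any field quotient $\zhalf{\mathcal{R}} \twoheadrightarrow K$, each $g \in G$ must map to $\pm 1$, since $g^2 = 1$ and $\mathrm{char}(K) \neq 2$, so the map factors as $\zhalf{\mathcal{R}} \twoheadrightarrow \zhalf{\Z} \to K$ via the character $\chi$ so defined; since the only field quotients of $\zhalf{\Z}$ are the $\F{p}$ for odd primes $p$, the maximal ideals of $\zhalf{\mathcal{R}}$ are precisely the ideals $\upp{\mathcal{R}}{\chi} + (p)$. If $I$ were proper, Zorn's lemma would place it inside some such $\upp{\mathcal{R}}{\chi} + (p)$, giving $\rho(\chi)(I) \subseteq p\zhalf{\Z} \subsetneq \zhalf{\Z}$ and contradicting the conclusion $\rho(\chi)(I) = \zhalf{\Z}$ of the previous paragraph. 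Crucially, this classification goes through uniformly even when $G$ is infinite, because $\zhalf{\Z}$ has no residue field of characteristic zero, which eliminates the otherwise-troubling possibility of a characteristic-zero maximal ideal.
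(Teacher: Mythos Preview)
Your proof is correct. The paper does not actually give its own proof of this proposition: it simply cites \cite[Section 3]{hut:sl2dv}. So there is nothing to compare against directly, but your argument stands on its own.

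Your approach is the natural one. For (1), the reduction to finite subgroups $H\subset G$ is exactly the right move, since $\zhalf{\Z}[H]$ is a finite product of copies of $\zhalf{\Z}$ via the idempotents $e^H_{\chi'}$; the key identity $m'=(1-e^H_\chi)m'$ whenever $e^H_\chi m'=0$ cleanly shows that $\upp{M'}{\chi}=M'\cap\upp{M}{\chi}$, which is the only nontrivial point. For (2), reducing to the detection statement ``$\dwn{K}{\chi}=0$ for all $\chi$ implies $K=0$'' via exactness is standard, and your maximal-ideal classification is correct: the crucial observation is that any field quotient of $\zhalf{\mathcal{R}}$ has characteristic $\not=2$, so every $g\in G$ is forced to $\pm 1$, and surjectivity onto the field then forces the factorisation through some $\rho(\chi)$ followed by reduction modulo an odd prime. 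The only subtlety worth flagging explicitly (you allude to it in your last sentence) is that $\zhalf{\Z}$ is Jacobson, so there is no ``hidden'' maximal ideal lying over $(0)\subset\zhalf{\Z}$; your argument handles this correctly by noting that $\zhalf{\mathcal{R}}\to K$ is surjective, hence so is $\zhalf{\Z}\to K$, ruling out characteristic zero.

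One stylistic remark: an equivalent and slightly more direct route to the detection statement avoids maximal ideals altogether. Given $0\not=m\in K$, for each finite $H\subset G$ decompose $m=\sum_{\psi\in\dual{H}}e^H_\psi m$ and note that some component is nonzero; a Zorn/compactness argument on the inverse system $\dual{H}$ then produces a single $\chi\in\dual{G}$ with $e^H_{\chi|_H}m\not=0$ for all finite $H$, whence $m\notin\upp{K}{\chi}=\bigcup_H(1-e^H_{\chi|_H})K$. Your maximal-ideal argument and this one are two sides of the same coin.
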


\begin{cor}\label{cor:localglobal}  For any field $F$ and any $\chi_0\not=\chi\in \dual{F^\times/(F^\times)^2}$, the natural $\sgr{F}$-homomorphism 
$\ho{3}{\spl{2}{F}}{\Z}\to \rpbp{F}$ induces an isomorphism
\[
\dwn{\ho{3}{\spl{2}{F}}{\zhalf{\Z}}}{\chi}\cong \dwn{\zhalf{\rpbp{F}}}{\chi}.
\]
\end{cor}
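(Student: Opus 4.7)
The plan is to combine the isomorphism on augmentation submodules supplied by Theorem \ref{thm:rpbp} with the observation that the functor $\dwn{-}{\chi}$ annihilates modules on which $F^\times$ acts trivially, whenever $\chi$ is non-trivial and $2$ is inverted. Write $H:=\ho{3}{\spl{2}{F}}{\zhalf{\Z}}$ and $R:=\zhalf{\rpbp{F}}$. The natural $\sgr{F}$-homomorphism $H\to R$ restricts, by Theorem \ref{thm:rpbp} (in the reformulation for $\rpbp{F}$ given after its proof), to an isomorphism $\aug{F} H\cong \aug{F} R$. It therefore suffices to show that for every non-trivial $\chi$, the inclusion $\aug{F} M\hookrightarrow M$ induces an isomorphism $\dwn{\aug{F} M}{\chi}\cong \dwn{M}{\chi}$ whenever $M$ is either $H$ or $R$.

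For this, apply the exact functor $\dwn{-}{\chi}$ (Proposition \ref{prop:localglobal}(1)) to the short exact sequence
\[
0\to \aug{F} M\to M\to M_{F^\times}\to 0
\]
of $\zhalf{\sgr{F}}$-modules, obtaining
\[
0\to \dwn{\aug{F} M}{\chi}\to \dwn{M}{\chi}\to \dwn{(M_{F^\times})}{\chi}\to 0.
\]
It is enough to prove $\dwn{(M_{F^\times})}{\chi}=0$. Since $F^\times$ acts trivially on $M_{F^\times}$, every element $\bar m\in \dwn{(M_{F^\times})}{\chi}$ satisfies both $\an{a}\cdot \bar m=\bar m$ (inherited from the coinvariants quotient) and $\an{a}\cdot\bar m=\chi(a)\bar m$ (defining relation of $\dwn{-}{\chi}$). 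Picking $a\in F^\times$ with $\chi(a)=-1$, which is possible because $\chi\neq\chi_0$, forces $2\bar m=0$, and hence $\bar m=0$ since $2$ is invertible.

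Combining the resulting identifications $\dwn{H}{\chi}\cong\dwn{\aug{F} H}{\chi}$ and $\dwn{R}{\chi}\cong\dwn{\aug{F} R}{\chi}$ with the isomorphism $\aug{F} H\cong \aug{F} R$ yields the desired $\sgr{F}$-isomorphism $\dwn{H}{\chi}\cong \dwn{R}{\chi}$, manifestly induced by the map $H\to R$. There is no substantial obstacle here; the argument amounts to a short diagram chase once Theorem \ref{thm:rpbp} and the exactness part of Proposition \ref{prop:localglobal} are in hand, the only genuine input being the triviality of $\dwn{N}{\chi}$ for $N$ with trivial $F^\times$-action and non-trivial $\chi$ after inverting $2$.
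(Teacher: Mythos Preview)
Your proof is correct and follows essentially the same route as the paper's own argument: apply the exact functor $\dwn{-}{\chi}$ to the short exact sequence $0\to \aug{F}M\to M\to M_{F^\times}\to 0$, observe that $\dwn{(M_{F^\times})}{\chi}=0$ for nontrivial $\chi$ (since some $a$ has $\chi(a)=-1$ and $2$ is inverted), and then invoke Theorem~\ref{thm:rpbp}. The only difference is that you spell out the $2\bar m=0$ step more explicitly than the paper does.
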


\begin{proof}
Since $\chi\not=\chi_0$, there exists $x\in F^\times$ with $\chi(x)=-1$ and hence for any $\sgr{F}$-module $M$ we have $\dwn{\left(\zhalf{M_{F^\times}}\right)}{\chi}=0$.  
Applying the functor $\dwn{( - )}{\chi}$ to the exact sequence $0\to \aug{F}M\to M\to M_{F^\times}\to 0$ thus shows that $\dwn{\zhalf{M}}{\chi}=\dwn{\left(\aug{F}\zhalf{M}\right)}{\chi}$.  The stated result thus follows from the isomorphism of $\sgr{F}$-modules 
\[
\aug{F}\ho{3}{\spl{2}{F}}{\zhalf{\Z}}\cong \aug{F}\rpbp{F}
\]
(Theorem \ref{thm:rpbp}).
\end{proof}

The following lemma will play a central role in all that follows:
\begin{lem}\label{lem:chi}
Let $\chi\in \dual{F^\times/(F^\times)^2}$. If $x\in F^\times$ with $\chi(x)=-1$, then
\[
2\dwn{\gpb{x}}{\chi}= 2\chi(x-1)\bconst{F} \mbox{ in } \dwn{\rpbp{F}}{\chi}.
\]
\end{lem}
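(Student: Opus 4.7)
The plan is to derive this identity directly from Corollary \ref{cor:bconst}, exploiting the fact that $\an{-1}$ acts trivially on $\rpbp{F}$ by construction.

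First I would observe that because $\an{-1}$ acts trivially on $\rpbp{F}$, the operators $\an{-x}$ and $\an{x}$ coincide on this module, and hence so do $\pf{-x}$ and $\pf{x}$. Corollary \ref{cor:bconst} was stated in $\qrpb{F}$; since $\rpbp{F}$ is a further quotient, the identity $\pf{x}\bconst{F} = \an{x-1}\pf{-x}\gpb{x}$ remains valid in $\rpbp{F}$, and by the observation just made it may be rewritten as
\[
\pf{x}\bconst{F} = \an{x-1}\pf{x}\gpb{x} \quad \text{in } \rpbp{F}
\]
for all $x \in F^\times \setminus \{1\}$. The hypothesis $\chi(x) = -1$ forces $x \neq 1$, so this identity applies to the $x$ of interest.

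Next I would reduce modulo $\upp{\sgr{F}}{\chi}$. On $\dwn{\rpbp{F}}{\chi}$, each element $\an{a} \in \sgr{F}$ acts as multiplication by the scalar $\chi(a) \in \{\pm 1\}$, so $\pf{x} = \an{x} - 1$ acts as $\chi(x) - 1 = -2$, while $\an{x-1}$ acts as $\chi(x-1)$. The displayed identity therefore descends to
\[
-2\, \dwn{\bconst{F}}{\chi} = -2\, \chi(x-1)\, \dwn{\gpb{x}}{\chi} \quad \text{in } \dwn{\rpbp{F}}{\chi}.
\]
Multiplying through by $\chi(x-1)$ and using $\chi(x-1)^2 = 1$ rearranges this to the desired
\[
2\, \dwn{\gpb{x}}{\chi} = 2\, \chi(x-1)\, \dwn{\bconst{F}}{\chi}.
\]

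There is no real obstacle here: the lemma is essentially a specialization of Corollary \ref{cor:bconst} at characters for which $\pf{x}$ acts as the scalar $-2$, and the entire argument consists in transporting that corollary across the quotient map $\qrpb{F} \to \rpbp{F}$ and then across the $\chi$-coinvariants functor. Worth noting only in passing: if $\chi(-1) = -1$, then every element of $\dwn{\rpbp{F}}{\chi}$ is already $2$-torsion (because $\an{-1}$ acts both as $+1$ and as $\chi(-1) = -1$), so both sides vanish and the identity is automatic in that case; the argument above handles both cases uniformly without needing to separate them.
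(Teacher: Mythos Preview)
Your proof is correct and follows essentially the same route as the paper's: both apply Corollary \ref{cor:bconst} and pass to $\chi$-coinvariants, using that $\an{-1}$ acts trivially on $\rpbp{F}$ (you do this in $\rpbp{F}$ before specializing, the paper does it after by assuming $\chi(-1)=1$). Your closing remark on the $\chi(-1)=-1$ case is in fact slightly more precise than the paper's, which asserts $\dwn{\rpbp{F}}{\chi}=0$ rather than merely $2$-torsion.
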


\begin{proof} From Corollary \ref{cor:bconst}, we have 
\[
\left( \chi(x)-1\right)\bconst{F}= \chi(x-1)\left(\chi(-x)-1\right)\dwn{\gpb{x}}{\chi}.
\]
 However, we can suppose that $\chi(-1)=1$, since otherwise $\dwn{\rpbp{F}}{\chi}=0$, and hence $\chi(-x)=\chi(x)=-1$. Thus, we obtain
\[
-2\cdot \bconst{F}= -2 \chi(x-1)\dwn{\gpb{x}}{\chi}.
\]
\end{proof}
\section{Fields with a valuation}\label{sec:val}
\subsection{Valuations and the modules $\kv{v}$}  Given a field $F$ and a (surjective) valuation $v:F^\times \to \Gamma$, where $\Gamma$ is a totally ordered additive abelian group, we let $\mathcal{O}_v:= \{ x\in F^\times\ |\ v(x)\geq 0\} \cup \{ 0\}$ be the associated valuation ring, with maximal ideal $\mathcal{M}_v=\{ x \in \mathcal{O}_v\ | \ v(x)\not=0\}$, group of units $U_v:= \mathcal{O}_v\setminus\mathcal{M}_v$ and residue field $k=k(v):= \mathcal{O}_v/\mathcal{M}_v$. 

Given an $\sgr{k}$-module $M$, we denote by $\ind{F}{k}M$ the $\sgr{F}$-module $\sgr{F}\otimes_{\Z[U_v]}M$ (noting that the ring $\Z[U_v]$ surjects naturally onto $\sgr{k}$ and maps naturally to $\sgr{F})$. 

We recall the following result (see \cite[Section 5]{hut:sl2dv}): 

\begin{lem}\label{lem:spec}
There is a natural homomorphism of $\sgr{F}$-modules $\spec{v}:\qrpb{F}\to \ind{F}{k}\qrpb{k}$ given by 
\[
\spec{v}(\gpb{x})=
\left\{
\begin{array}{ll}
1\otimes \gpb{\bar{x}},& v(x)=0\\
1\otimes \bconst{k},& v(x)>0\\
-\left( 1\otimes \bconst{k}\right), & v(x)<0.\\
\end{array}
\right.
\]
\end{lem}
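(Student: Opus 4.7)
The strategy is to use the explicit presentation of $\qrpb{F}$: it is the $\sgr{F}$-module with generators $\gpb{x}$, $x \in F^\times$, subject to the five-term relations $S_{x,y}$ (for $x,y\neq 1$) inherited from $\rpb{F}$, together with the cocycle relations $\suss{1}{x}=\gpb{x}+\an{-1}\gpb{x^{-1}}=0$ for all $x \in F^\times$. I would define a candidate map $\widetilde{\spec{v}}$ on the free $\sgr{F}$-module on the symbols $\gpb{x}$ by the stated piecewise formula (extended $\sgr{F}$-linearly) and then verify that it kills both families of relations; this would produce the required homomorphism $\spec{v} : \qrpb{F} \to \ind{F}{k}\qrpb{k}$.

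The cocycle check is quick. For $v(x)=0$ the image is $1\otimes \suss{1}{\bar x}$, which vanishes in $\qrpb{k}$. For $v(x) > 0$ the image equals $(1\otimes\bconst{k}) - \an{-1}(1\otimes\bconst{k})$; this vanishes provided $\an{-1}\bconst{k}=\bconst{k}$ in $\qrpb{k}$, which I would verify by comparing the two instances $\bconst{k}=\gpb{-1}+\an{-1}\gpb{2}$ and $\bconst{k}=\gpb{2}+\an{-1}\gpb{-1}$ of the $\qrpb{}$-identity $\bconst{k}=\gpb{z}+\an{-1}\gpb{1-z}$, the former being $\an{-1}$ times the latter. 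The case $v(x) < 0$ is identical up to sign.

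The substantive step is the five-term $S_{x,y}$ relations, which I would verify by a case analysis on $(\alpha, \beta) := (v(x), v(y))$ and, when a valuation vanishes, on whether the corresponding reduction equals $1$. In the generic case $\alpha=\beta=0$ with $\bar x, \bar y, \bar y/\bar x \neq 1$, the image of $S_{x,y}$ is precisely $1\otimes S_{\bar x,\bar y}$, which vanishes in $\qrpb{k}$. In all remaining cases, one or more of the five arguments $x, y, y/x, (1-x^{-1})/(1-y^{-1}), (1-x)/(1-y)$ has nonzero valuation or reduces to $1$, so that the corresponding summand produces a $\pm(1\otimes\bconst{k})$-contribution (or its $\an{-1}$-twist) in place of a genuine $\gpb{\bar z}$-term. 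After collecting the coefficients $\an{x}, -\an{x^{-1}-1}, \an{1-x}$ and reducing the $U_v$-action to the $\sq{k}$-action on $\qrpb{k}$, each five-term sum collapses to an $\sgr{F}$-combination of $1\otimes\bconst{k}$ and $1\otimes\gpb{\bar z}$ (for at most one surviving residue $\bar z$) which vanishes using the identities $\bconst{k}=\gpb{\bar z}+\an{-1}\gpb{1-\bar z}$, $\an{-1}\bconst{k}=\bconst{k}$, $3\bconst{k}=0$, and $\pf{u}\bconst{k}=\suss{2}{\bar u}=\an{\bar u-1}\pf{-\bar u}\gpb{\bar u}$ from Proposition \ref{prop:bconst} and Corollary \ref{cor:bconst}.

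The main obstacle I anticipate is the combinatorial completeness of this case analysis: with the sign of each of $\alpha, \beta$, plus the auxiliary valuations $v(y-x), v(1-x), v(1-y)$ (which control the reductions of the three ratios), the number of subcases proliferates, and ensuring that in every subcase the surviving $\bconst{k}$-coefficient actually collapses to zero requires careful use of all the structural identities above. The sub-cases where $\alpha=\beta=0$ but $\bar x=\bar y$ (so $\bar y/\bar x=1$ while $y/x$ has indeterminate residue) appear the most delicate, since then $\gpb{\overline{y/x}}$ is indeterminate until one further stratifies by $v(y-x)$; however, the convention $\gpb{1}=0$ in $\qrpb{k}$ plus the $\pf{u}\bconst{k}$-identity should again force collapse to zero.
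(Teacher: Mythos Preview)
The paper does not prove this lemma; it merely recalls it from \cite[Section 5]{hut:sl2dv}. Your approach---defining $\widetilde{S}_v$ on the free $\sgr{F}$-module and verifying that both the five-term relations $S_{x,y}$ and the relations $\suss{1}{x}=0$ map to zero---is exactly the natural strategy, and is essentially what the cited reference does.

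Two small comments. First, your verification that $\an{-1}\bconst{k}=\bconst{k}$ via the pair $\gpb{-1}+\an{-1}\gpb{2}$ and $\gpb{2}+\an{-1}\gpb{-1}$ breaks down when $\mathrm{char}(k)\in\{2,3\}$ (and more generally when $k$ is too small for $2$ and $-1$ to be distinct nontrivial elements). A cleaner route, valid uniformly, is to use Corollary~\ref{cor:bconst}: one has $\pf{-1}\bconst{k}=\suss{2}{-1}$ in $\qrpb{k}$, and $\suss{2}{-1}=\an{2}(\an{-1}+1)\gpb{-1}=\an{2}\suss{1}{-1}=0$; in characteristic $2$ the claim is trivial since $\an{-1}=1$.

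Second, in the non-generic five-term cases you should keep track of the fact that coefficients such as $\an{x}$ with $v(x)\neq 0$ do \emph{not} reduce to $\sgr{k}$; they remain genuine $\sgr{F}$-scalars acting on the induced module. The cancellations then happen because the $\an{x}\otimes\bconst{k}$-terms pair off (using identities like $\an{x^{-1}-1}=\an{1-x}\an{x}$ together with the $U_v$-reduction of $\an{1-x}$), leaving a residual element of the form $1\otimes(\cdots)$ that vanishes by the $\qrpb{k}$-identities you list. You have the right ingredients; just be explicit that the ``$\sgr{F}$-combination'' you obtain must be shown to vanish in the induced module, not merely after passing to $\qrpb{k}$.
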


Now let $\kv{v}\subset \qrpb{F}$ be the $\sgr{F}$-submodule generated by $\{ \gpb{u}\ | \ u\in U_1=U_{1,v}:= 1+ \mathcal{M}_v\subset U_v\}$. (Caution: This is a slightly different definition from that given in \cite{hut:sl2dv}.) 

The following is a refinement of \cite[Lemma 5.2]{hut:sl2dv}:

\begin{prop}\label{prop:kv} Given a valuation $v$ on the field $F$, there is a natural short exact sequence of $\sgr{F}$-modules 
\[
\xymatrix{
0\ar[r]&\kv{v}\ar[r]& \qrpb{F}\ar^-{\spec{v}}[r]&\ind{F}{k}\qrpb{k}\ar[r]&0.\\
}
\]
\end{prop}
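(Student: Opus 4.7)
The plan is to establish exactness at each of the three terms, with the main work concentrated at the middle.

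\textbf{Exactness at the right.} Surjectivity of $\spec{v}$ is essentially immediate from the formula in Lemma \ref{lem:spec}: for any $\bar{x}\in k^\times$ choose a lift $x\in U_v$, so that $1\otimes \gpb{\bar{x}}=\spec{v}(\gpb{x})$; and any $\pi$ with $v(\pi)>0$ satisfies $\spec{v}(\gpb{\pi})=1\otimes\bconst{k}$. Since $\spec{v}$ is $\sgr{F}$-linear and the displayed elements generate $\ind{F}{k}\qrpb{k}$ as $\sgr{F}$-module, this settles surjectivity.

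\textbf{Containment $\kv{v}\subseteq \ker{\spec{v}}$.} For $u\in U_1$ one has $\bar{u}=1$, so $\spec{v}(\gpb{u})=1\otimes\gpb{1}=0$. Because $\spec{v}$ is a morphism of $\sgr{F}$-modules and the $\gpb{u}$, $u\in U_1$, generate $\kv{v}$ as $\sgr{F}$-module, the containment follows.

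\textbf{Reverse containment, via a section.} The heart of the proof is to construct an $\sgr{F}$-module homomorphism
\[
\psi: \ind{F}{k}\qrpb{k} \to \qrpb{F}/\kv{v}
\]
such that $\psi\circ \bar{\spec{v}}=\mathrm{id}$, where $\bar{\spec{v}}$ is the homomorphism induced by $\spec{v}$ on $\qrpb{F}/\kv{v}$; combined with surjectivity of $\spec{v}$, this forces $\bar{\spec{v}}$ to be an isomorphism and hence $\ker{\spec{v}}=\kv{v}$. I would fix a set-theoretic section $s\colon k^\times\to U_v$ of the residue map, extended by choosing $s(\bar{0}),s(\bar{\infty})$ with positive and negative valuation respectively, and define $\psi$ on generators by $1\otimes \gpb{\bar{x}}\mapsto \gpb{s(\bar{x})}\bmod \kv{v}$ (using the convention $\gpb{0}=\bconst{F}$, $\gpb{\infty}=-\bconst{F}$ in $\rpbp{F}$, and the analogous elements in $\qrpb{F}$ via Corollary~\ref{cor:bconst}).

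\textbf{Well-definedness of $\psi$.} This breaks into three verifications:
\begin{enumerate}
\item \emph{Independence of the section.} If $u\in U_1$ and $x\in U_v$, I must show $\gpb{ux}\equiv \gpb{x}\pmod{\kv{v}}$. This is the crucial identity. I would apply the relation $S_{ux,x}$ (together with its companions obtained by permuting the arguments) in $\qrpb{F}$ and observe that, after reducing modulo the cocycle relation $\suss{1}{-}=0$ that defines $\qrpb{F}$, every term other than $\gpb{ux}-\gpb{x}$ becomes an $\sgr{F}$-multiple of some $\gpb{u'}$ with $u'\in U_1$: the arguments such as $(1-x^{-1})/(1-(ux)^{-1})$ and $(1-x)/(1-ux)$ all lie in $U_1$ because both numerator and denominator have the same leading term modulo $\mathcal{M}_v$.
\item \emph{Relations $S_{\bar{x},\bar{y}}$ in $\qrpb{k}$.} Having (1) in hand, it suffices to lift each such relation to $S_{s(\bar{x}),s(\bar{y})}$ in $\qrpb{F}$; the difference between a term of the latter and its counterpart in $\ind{F}{k}\qrpb{k}$ is of the form $\gpb{\mathrm{lift}}-\gpb{\mathrm{preferred\ lift}}$, which lies in $\kv{v}$ by~(1).
\item \emph{$\Z[U_v]$-linearity of the underlying map $\tilde{\psi}\colon \qrpb{k}\to\qrpb{F}/\kv{v}$.} For $u,u'\in U_v$ with $\bar{u},\bar{u'}\in k^\times$, one needs $\tilde{\psi}(\an{\bar{u}}\gpb{\bar{u'}})=\an{u}\gpb{s(\bar{u'})}\pmod{\kv{v}}$, which again follows from~(1) applied to the difference $\gpb{uu'}-\gpb{s(\overline{uu'})}$ and to the translation of the $\sgr{k}$-action into the $\sgr{F}$-action via the map $U_v\twoheadrightarrow k^\times$.
\end{enumerate}

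Once $\psi$ is well-defined, checking $\psi\circ \bar{\spec{v}}=\mathrm{id}$ on the generators $\gpb{x}$ of $\qrpb{F}$ is a direct calculation using the three cases of $v(x)$ in Lemma~\ref{lem:spec} and the identity $\bconst{F}=\gpb{x}+\an{-1}\gpb{1-x}$ in $\qrpb{F}$ (for $v(x)\neq 0$ this computes $\gpb{x}\bmod\kv{v}$ from $\bconst{F}$ by adding a term $\gpb{1-x}$ with $1-x\in U_1$, hence lying in $\kv{v}$).

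\textbf{Main obstacle.} The only substantive step is (1): proving that multiplication by a principal unit does not alter the class of $\gpb{x}$ modulo $\kv{v}$. Everything else is formal bookkeeping around the five-term relation $S_{x,y}$, the induced module structure, and the bookkeeping of lifts. I expect this to follow from a careful choice of two instances of $S_{x,y}$ that produce $\gpb{ux}-\gpb{x}$ as an $\sgr{F}$-combination of $\gpb{u'}$'s with $u'\in U_1$, possibly after reducing modulo $\suss{1}{-}=0$ to eliminate unwanted $\gpb{x^{-1}}$-terms.
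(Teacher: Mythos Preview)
Your overall strategy matches the paper's: establish surjectivity, the easy inclusion $\kv{v}\subset\ker{\spec{v}}$, and then build a section $\ind{F}{k}\qrpb{k}\to\qrpb{F}/\kv{v}$. Your step (1) is exactly the paper's Lemma~\ref{lem:2}, and the proposed proof via a single five-term relation $S_{x,xu}$ is correct: the three auxiliary arguments $u$, $(1-x)/(1-xu)$ and $u(1-x)/(1-xu)$ all lie in $U_1$ when $x\notin U_1$.

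The genuine gap is in step (3). To extend $\tilde\psi:\qrpb{k}\to\qrpb{F}/\kv{v}$ to an $\sgr{F}$-map out of the induced module $\sgr{F}\otimes_{\Z[U_v]}\qrpb{k}$, you need the $\Z[U_v]$-action on $\qrpb{F}/\kv{v}$ (coming from the $\sgr{F}$-action) to descend to an $\sgr{k}$-action. Concretely, for $u\in U_1$ you must show
\[
\an{u}\gpb{x}=\gpb{x}\quad\mbox{in }\qrpb{F}/\kv{v}\mbox{ for all }x\in F^\times.
\]
This is \emph{not} your step (1), which asserts $\gpb{ux}=\gpb{x}$: the first concerns the scalar action of the square class $\an{u}$, the second the argument of the generator. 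Your sentence ``which again follows from~(1) applied to the difference $\gpb{uu'}-\gpb{s(\overline{uu'})}$'' conflates the two; note that $\an{\bar u}\gpb{\bar u'}$ is not $\gpb{\bar u\bar u'}$ in $\qrpb{k}$, so reducing the problem to a difference of bracket symbols is not legitimate here.

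The paper isolates this as a separate Lemma~\ref{lem:3} and proves it via Corollary~\ref{cor:bconst}: from $\pf{u}\bconst{F}=\an{u-1}\pf{-u}\gpb{u}$ one first obtains $\an{u}\bconst{F}=\bconst{F}$ in the quotient (since $\gpb{u}\in\kv{v}$), and then, using $\pf{-xu}\gpb{x}=\pf{-xu}\gpb{xu}$ from Lemma~\ref{lem:2} together with two more applications of Corollary~\ref{cor:bconst}, one deduces $\an{-x}\pf{u}\gpb{x}=0$. This argument genuinely requires the algebra surrounding the constant $\bconst{F}$ and does not drop out of a single five-term relation; you should add it as a fourth verification alongside your (1)--(3).
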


\begin{proof}
Certainly, $\kv{v}\subset \ker{\spec{v}}$ and, since $\spec{v}$ is clearly surjective, it induces a surjective homomorphism of $\sgr{F}$-modules
\[
\spec{v}:\dwn{\qrpb{F}}{v}:= \frac{\qrpb{F}}{\kv{v}}\to \ind{F}{k}\qrpb{k}. 
\]
To prove the Proposition it will thus suffice to construct an $\sgr{F}$-module homomorphism $T_v:\ind{F}{k}\qrpb{k}\to \dwn{\qrpb{F}}{v}$ satisfying 
$T_v\circ \spec{v}=\mathrm{Id}_{\dwn{\qrpb{F}}{v}}$. 

We will require the following three lemmas:

\begin{lem}\label{lem:1}
If $v(x)\not=0$, then 
\[
\gpb{x}=
\left\{
\begin{array}{ll}
\bconst{F}, & v(x) >0\\
-\bconst{F}, & v(x)<0\\
\end{array}
\right.
\mbox{ in } \dwn{\qrpb{F}}{v}.
\]
\end{lem}
\textit{Proof of Lemma \ref{lem:1}:}  If $v(x)>0$, then $\bconst{F}=\gpb{x}+\an{-1}\gpb{1-x}=\gpb{x}$ in $\dwn{\qrpb{F}}{v}$ since $1-x\in U_1$.

 If 
$v(x)<0$, then $v(x^{-1})>0$ and $\bconst{F}=\an{-1}\bconst{F}=\an{-1}\gpb{x^{-1}}=-\gpb{x}$, since $0=\suss{1}{x}=\gpb{x}+\an{-1}\gpb{x^{-1}}$ in 
$\qrpb{F}$.

\begin{lem}\label{lem:2}
For all $x\in F^\times$, $u\in U_1$, $\gpb{x}=\gpb{xu}$ in $\dwn{\qrpb{F}}{v}$.
\end{lem}

\textit{Proof of Lemma \ref{lem:2}:} Since $\gpb{u}=0$ in $\dwn{\qrpb{F}}{v}$ for all $u\in U_1$, we may assume $x\not\in U_1$. Then 
\[
0= \gpb{x}-\gpb{xu}+\an{x}\gpb{u}-\an{x^{-1}-1}\gpb{u\cdot\frac{1-x}{1-xu}}+\an{1-x}\gpb{\frac{1-x}{1-xu}}.
\]

But this implies $\gpb{x}=\gpb{xu}$ in $\dwn{\qrpb{F}}{v}$ since $u, (1-x)/(1-xu)\in U_1$. 

\begin{lem}\label{lem:3}
For all $x\in F^\times$, $u\in U_1$ we have $\an{u}\gpb{x}=\gpb{x}$ in $\dwn{\qrpb{F}}{v}$.
\end{lem}

\textit{Proof of Lemma \ref{lem:3}:}  Let $u\in U_1$, $u\not=1$.  By Corollary \ref{cor:bconst}, $\pf{u}\bconst{F}=\an{u-1}\pf{-u}\gpb{u}=0$ in 
$\dwn{\qrpb{F}}{v}$; i.e.,  $\an{u}\bconst{F}=\bconst{F}$ in $\dwn{\qrpb{F}}{ v}$ for all $u\in U_1$. 

Now for $x\in F^\times$, $u\in U_1$ (and $x\not=1$, $xu\not=1$) we have the following sequence of identities in $\dwn{\qrpb{F}}{v}$:
\begin{eqnarray*}
\pf{-xu}\gpb{x}&=& \pf{-xu}\gpb{xu} \mbox{ by Lemma \ref{lem:2}}\\
&=& \an{xu-1}\pf{xu}\bconst{F} \mbox{ by Corollary \ref{cor:bconst}}\\
&=& \an{xu-1}\pf{x}\bconst{F} \mbox{ since $\pf{xu}-\pf{x}=\an{x}\pf{u}$}\\
&=& \an{\frac{xu-1}{x-1}}\an{x-1}\pf{x}\bconst{F}\\
&=& \an{x-1}\pf{x}\bconst{F} \mbox{ since $(xu-1)/(x-1)\in U_1$}\\
&=& \pf{-x}\gpb{x}\mbox{ by Corollary \ref{cor:bconst} again}
\end{eqnarray*}
and hence 
\[
0= \left( \pf{-xu}-\pf{-x}\right)\gpb{x}=\an{-x}\pf{u}\gpb{x}
\]
proving the Lemma. 

By Lemma \ref{lem:3}, the $\Z[U_v]$-action on $\dwn{\qrpb{F}}{v}$ descends to an $\sgr{k}$-module structure. Combining this with Lemma \ref{lem:2}, there is a well-defined $\sgr{k}$-module homomorphism 
\[
t_v:\qrpb{k}\to \dwn{\qrpb{F}}{v}, \gpb{\bar{u}}\mapsto \gpb{u}, \quad u\in U_v.
\]
Thus there is an induced $\sgr{F}$-module homomorphism
\[
T_v:\ind{F}{k}\qrpb{k}=\sgr{F}\otimes_{\Z[U_v]}\qrpb{k}\to \dwn{\qrpb{F}}{v}, \quad \an{a}\otimes x\mapsto \an{a}t_v(x).
\]

Now, by choosing $u\in U_v\setminus U_1$, and noting that then $\bconst{k}=\gpb{\bar{u}}+\an{-1}\gpb{1-\bar{u}}$,  we see that $T_v(1\otimes \bconst{k})=\bconst{F}$.  Hence if $v(x)\not=0$ we have $T_v(S_v(\gpb{x}))=\gpb{x}$ in 
$\dwn{\qrpb{F}}{v}$ by Lemma \ref{lem:1}.  On, the other hand, if $u\in U_v$, then $T_v(S_v(\gpb{u}))=\gpb{u}$ (using Lemma \ref{lem:2} again), so that
$T_v\circ S_v =\mathrm{id}_{\dwn{\qrpb{F}}{v}}$ as required. 
\end{proof}

Tensoring with $\zhalf{\Z}$, taking the $\ep{-1}$-component and using Theorem \ref{thm:rpbp} we deduce

\begin{cor}\label{cor:kv} There is a natural short exact sequence of $\sgr{F}$-modules 
\[
\xymatrix{
0\ar[r]&\ep{-1}\zhalf{\kv{v}}\ar[r]&\zhalf{\rpbp{F}}\ar^-{\spec{v}}[r]&\ind{F}{k}\zhalf{\rpbp{k}}\ar[r]&0.\\
}
\]
\end{cor}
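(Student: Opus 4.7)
The plan is to derive the corollary from Proposition \ref{prop:kv} by two standard operations: inverting $2$ and applying the idempotent $\ep{-1} = (1+\an{-1})/2$. Both operations are exact, so the only real work is in identifying the three terms after these operations.

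First, I would tensor the exact sequence of Proposition \ref{prop:kv} with $\zhalf{\Z}$ to obtain an exact sequence of $\zhalf{\sgr{F}}$-modules. Next, since $\ep{-1}\in \zhalf{\sgr{F}}$ is an idempotent, the functor $M\mapsto \ep{-1}M$ on $\zhalf{\sgr{F}}$-modules is exact (it is a direct summand functor, the complementary summand being $\ep{-1}^{-}M$ for $\ep{-1}^{-}=(1-\an{-1})/2$). Applying it to the sequence from Proposition \ref{prop:kv} then yields the exact sequence
\[
0\to \ep{-1}\zhalf{\kv{v}}\to \ep{-1}\zhalf{\qrpb{F}}\xrightarrow{\spec{v}} \ep{-1}\zhalf{\ind{F}{k}\qrpb{k}}\to 0.
\]

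The remaining step is to identify the middle and right-hand terms with the ones appearing in the statement. For the middle term, Theorem \ref{thm:rpbp} gives $\ep{-1}\zhalf{\qrpb{F}}=\zhalf{\rpbker{F}}$; on the other hand, multiplication by $\ep{-1}$ forces $\an{-1}$ to act trivially, so this is naturally isomorphic to $\zhalf{\rpbp{F}}$ (the quotient of $\zhalf{\qrpb{F}}$ by the submodule generated by the $(1-\an{-1})\gpb{x}$ is precisely the largest quotient on which $\an{-1}$ acts trivially, and after inverting $2$ this quotient is computed by the idempotent $\ep{-1}$). For the right-hand term, I would use that since $-1\in U_v$, the element $\an{-1}$ lies in the image of $\Z[U_v]\to \sgr{F}$ and its action on $\ind{F}{k}\qrpb{k}=\sgr{F}\otimes_{\Z[U_v]}\qrpb{k}$ is the one induced from its action on $\qrpb{k}$. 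Thus $\ep{-1}$ commutes with $\ind{F}{k}$ and
\[
\ep{-1}\zhalf{\ind{F}{k}\qrpb{k}}=\ind{F}{k}\left(\ep{-1}\zhalf{\qrpb{k}}\right)=\ind{F}{k}\zhalf{\rpbp{k}}
\]
by the same identification applied to the residue field.

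There is no serious obstacle here: the argument is formal once Proposition \ref{prop:kv} and Theorem \ref{thm:rpbp} are in hand. The only point that requires any care is verifying that $\ep{-1}$ interacts correctly with the induction $\ind{F}{k}(-)=\sgr{F}\otimes_{\Z[U_v]}(-)$, and this follows from the observation that $-1$ is a unit of the valuation ring so that $\an{-1}$ acts on the tensor product through either factor indifferently.
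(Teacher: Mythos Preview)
Your proposal is correct and follows exactly the same route as the paper, which simply says ``Tensoring with $\zhalf{\Z}$, taking the $\ep{-1}$-component and using Theorem \ref{thm:rpbp} we deduce''; you have merely supplied the details the paper leaves implicit. The only point worth singling out is your observation that $-1\in U_v$, so $\ep{-1}$ comes from $\zhalf{\Z[U_v]}$ and passes through the induction $\ind{F}{k}(-)$ --- this is exactly what is needed to identify the right-hand term, and it is correct.
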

\subsection{Discrete valuations and the specialization homomorphism}  

Suppose that $v:F^\times \to \Z$ is a discrete valuation on the field $F$ with residue field $k=k(v)$. Let $\chi_v:F^\times/(F^\times)^2\to \mu_2$ denote the associated character defined by $\chi_v(a)=(-1)^{v(a)}$. 

For an abelian group $M$, we let $\rfmod{M}{v}$ denote the $\sgr{F}$-module $\sgr{\chi_v}\otimes_\Z M$. Equivalently, we equip $M$ with the $\sgr{F}$-module structure $\an{a}m:= (-1)^{v(a)}m$. 

\begin{thm}\label{thm:dv}  Let $F$ be  a field with discrete valuation $v:F^\times \to \Z$ and residue field $k$. Then we have natural isomorphisms
\[
\xymatrix{
\dwn{\zhalf{\rpbp{F}}}{\chi_v}\ar_-{\spec{v}}^-{\cong}[r]&\dwn{\left( \ind{F}{k}{\zhalf{\rpbp{k}}}\right)}{\chi_v}\ar^-{\cong}[r]&\rfmod{\zhalf{\pb{k}}}{v}.\\
}
\]

\end{thm}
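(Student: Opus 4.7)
The plan is to apply the exact functor $\dwn{(-)}{\chi_v}$ (Proposition \ref{prop:localglobal}(1)) to the short exact sequence of Corollary \ref{cor:kv}, yielding
\[
0 \to \dwn{\ep{-1}\zhalf{\kv{v}}}{\chi_v} \to \dwn{\zhalf{\rpbp{F}}}{\chi_v} \xrightarrow{\dwn{\spec{v}}{\chi_v}} \dwn{\ind{F}{k}\zhalf{\rpbp{k}}}{\chi_v} \to 0.
\]
The theorem then splits into two independent assertions: (a) $\dwn{\ind{F}{k}\zhalf{\rpbp{k}}}{\chi_v}\cong \rfmod{\zhalf{\pb{k}}}{v}$, which gives the second displayed isomorphism, and (b) $\dwn{\ep{-1}\zhalf{\kv{v}}}{\chi_v}=0$, which promotes $\dwn{\spec{v}}{\chi_v}$ to an isomorphism.

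Assertion (a) is formal. Unwinding the induction gives $\dwn{\ind{F}{k}\zhalf{\rpbp{k}}}{\chi_v} = \sgr{\chi_v}\otimes_{\Z[U_v]}\zhalf{\rpbp{k}}$. Since $\chi_v$ restricts trivially to $U_v$ and $U_v$ acts on $\rpbp{k}$ through its quotient $k^\times$, this reduces to the coinvariants $\zhalf{(\rpbp{k})_{k^\times}} = \zhalf{\qpb{k}} = \zhalf{\pb{k}}$, carrying the residual $\sgr{F}$-action via $\chi_v$; that is precisely $\rfmod{\zhalf{\pb{k}}}{v}$.

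For assertion (b), note that $\ep{-1}$ maps to $1$ in $\dwn{\sgr{F}}{\chi_v}$ because $\chi_v(-1)=1$, and $\kv{v}$ is generated as an $\sgr{F}$-module by the symbols $\gpb{u}$, $u\in U_1$; so it suffices to show $\dwn{\gpb{u}}{\chi_v}=0$ in $\dwn{\zhalf{\rpbp{F}}}{\chi_v}$ for each such $u\neq 1$. Write $n:=v(1-u)\geq 1$. If $n$ is odd, then $\chi_v(1-u)=-1$, so Lemma \ref{lem:chi} gives $\dwn{\gpb{1-u}}{\chi_v} = \chi_v(-u)\bconst{F} = \bconst{F}$ (using $\chi_v(-1)=\chi_v(u)=1$); combining this with the identity $\gpb{u}+\gpb{1-u}=\bconst{F}$, valid in $\rpbp{F}$, yields $\dwn{\gpb{u}}{\chi_v}=0$.

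The substantive point, and the main obstacle, is the even case $n\geq 2$. Here I plan to reduce to the odd case via the five-term relation $S_{x,y}$ applied with the judicious choice $x:=1+\pi$, $y:=u(1+\pi)$, where $\pi$ is a uniformizer. Direct computation confirms that $v(1-x)=v(1-y)=1$, that $y/x=u$, and that both remaining arguments $(1-x^{-1})/(1-y^{-1})$ and $(1-x)/(1-y)$ lie in $U_1$ with $v(1-\cdot)=n-1$. Since $n$ is even, all four ambient symbols in $S_{x,y}$ fall into the already-settled odd case; applying $\dwn{(-)}{\chi_v}$ to $S_{x,y}$ and inserting the resulting vanishings isolates $\dwn{\gpb{u}}{\chi_v}=0$, completing the proof. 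The delicate point is the choice of $(x,y)$: multiplying $u$ by $1+\pi$ shifts every relevant valuation to an odd value while preserving $y/x=u$, which is exactly what makes the inductive reduction work.
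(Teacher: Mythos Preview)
Your proof is correct and follows the same overall architecture as the paper: apply the exact functor $\dwn{(-)}{\chi_v}$ to the sequence of Corollary~\ref{cor:kv}, handle the right-hand isomorphism formally via the coinvariant computation, and reduce the left-hand isomorphism to showing $\dwn{\gpb{u}}{\chi_v}=0$ for $u\in U_1$, splitting on the parity of $v(1-u)$.

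The one substantive difference is in the even case. The paper proceeds by first establishing the auxiliary statement (its Lemma~\ref{lem:chiv}) that $\dwn{\gpb{x}}{\chi_v}=\bconst{F}$ for \emph{every} $x$ with $v(x)>0$; for $v(x)=2k$ it writes $x=\pi^{2k}u_0$, sets $y=\pi u_0$, $z=\pi^{1-2k}$, and applies $S_{z,y}$, so that all auxiliary terms have odd valuation. You instead stay inside $U_1$ and use $S_{1+\pi,\,u(1+\pi)}$, which forces all four auxiliary arguments into $U_1$ with $v(1-\cdot)$ odd (either $1$ or $n-1$), reducing to your already-settled odd case in a single step. This is a genuinely different and somewhat slicker five-term choice: you avoid the intermediate lemma entirely. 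The trade-off is that the paper's Lemma~\ref{lem:chiv} is reused as a template in the proof of Theorem~\ref{thm:compdv}, so its slightly longer route pays dividends later; your argument is self-contained but would need to be adapted again there.
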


\begin{proof} By Corollary \ref{cor:kv}, to prove the left-hand isomorphism, we must prove that $\dwn{\left(\ep{-1}\zhalf{\kv{v}}\right)}{\chi_v}=0$; i.e., we must prove that $\dwn{\gpb{u}}{\chi_v}=0$ in $\zhalf{\rpbp{F}}$ for all $u\in U_1$. This, in turn, follows from 
\begin{lem}\label{lem:chiv}  $\dwn{\gpb{x}}{\chi_v}=\bconst{F}$ in $\dwn{\zhalf{\rpbp{F}}}{\chi_v}$ whenever $v(x)>0$. 
\end{lem}

For, given this lemma, if $u\in U_1$ then $v(1-u)>0$ and hence 
\[
\dwn{\gpb{1-u}}{\chi_v}=\bconst{F}= \dwn{\gpb{u}}{\chi_v}+\chi_v(-1)\dwn{\gpb{1-u}}{\chi_v}=\dwn{\gpb{u}}{\chi_v}+\dwn{\gpb{1-u}}{\chi_v}.
\]

\textit{Proof of Lemma \ref{lem:chiv}:} Suppose that $x\in F^\times$ with $v(x)>0$. If $v(x)$ is odd then $\chi_v(x)=-1$ and hence 
\[
\dwn{\gpb{x}}{\chi_v}=\chi(x-1)\bconst{F}=\bconst{F} \mbox{ in } \dwn{\zhalf{\rpbp{F}}}{\chi_v}
\]
by Lemma \ref{lem:chi}.

Suppose, on the other hand, that $v(x)=2k$ with $k\geq 1$. Let $\pi\in F^\times$ with $v(\pi)=1$. So $x= \pi^{2k}u$  for some $u\in U_v$. Let $y=\pi u$ and $z=
\pi^{1-2k}\in F$. Then in $\dwn{\zhalf{\rpbp{F}}}{\chi_v}$ we have 
\[
0= \dwn{\gpb{z}}{\chi_v}-\dwn{\gpb{y}}{\chi_v}+\chi_v(z)\dwn{\gpb{x}}{\chi_v}-\chi_v(1-z^{-1})\dwn{\gpb{\frac{y(1-z)}{z(1-y)}}}{\chi_v}
+\chi_v(1-z)\dwn{\gpb{\frac{1-z}{1-y}}}{\chi_v}.
\]
We can identify all of the terms ocurring (except for $\dwn{\gpb{x}}{\chi_v}$) using the case of odd valuation: 

$v(y)=1 \implies \dwn{\gpb{y}}{\chi_v}=\bconst{F}$.  

$v(z^{-1})=2k-1 \implies \dwn{\gpb{z^{-1}}}{\chi_v}=\bconst{F} \implies 
\dwn{\gpb{z}}{\chi_v}=-\bconst{F}$ since $\gpb{a}=-\gpb{a^{-1}}$ in $\rpbp{F}$.

\[
\frac{1-z}{1-y}=\frac{\pi^{2k-1}-1}{\pi^{2k-1}(1-\pi u)} \implies v\left(\frac{1-z}{1-y}\right)=-(2k-1)
\]
and thus
\[
\dwn{\gpb{\frac{1-z}{1-y}}}{\chi_v}=-\bconst{F}.
\]
Futhermore
\[
v\left(\frac{y(1-z)}{z(1-y)}\right)=v\left(x\cdot \frac{1-z}{1-y}\right)=2k+(1-2k)=1 \implies \dwn{\gpb{\frac{y(1-z)}{z(1-y)}}}{\chi_v}=\bconst{F}.
\]

Since $\chi_v(z)=-1=\chi_v(1-z)$ and $\chi_v(1-z^{-1})=1$, we therefore deduce 
\[
0=-\bconst{F}-\bconst{F}-\dwn{\gpb{x}}{\chi_v}-\bconst{F}+\bconst{F}
\]
proving Lemma \ref{lem:chiv} (since $3\cdot\bconst{F}=0$ in $\rpbp{F}$).

The second isomorphism of the theorem follows from the general calculation for any $\sgr{k}$-module $M$: 
\begin{eqnarray*}
\dwn{\left(\ind{F}{k}M\right)}{\chi_v}&=& \sgr{\chi_v}\otimes_{\sgr{F}}\left(\sgr{F}\otimes_{\Z[U]}M\right)\\
&\cong& \sgr{\chi_v}\otimes_{\Z[U]}M\\
&\cong& \sgr{\chi_v}\otimes_\Z M_U \mbox{ since $\Z[U]\to \sgr{F}\to \sgr{\chi_v}$ factors through augmentation}\\
&=& \sgr{\chi_v}\otimes_\Z M_{k^\times} = \rfmod{M_{k^\times}}{v}. 
\end{eqnarray*}
\end{proof}

\begin{rem}  The isomorphism $\dwn{\zhalf{\rpbp{F}}}{\chi_v}\cong \rfmod{\zhalf{\pb{k}}}{v}$ of Theorem \ref{thm:dv} is induced by the map of $\sgr{F}$-modules
\begin{eqnarray*}
\sspec{v}:\rpbp{F}&\to&\rfmod{\qpb{k}}{v}\\
\gpb{x}&\mapsto&\left\{
\begin{array}{rc}
\gpb{\bar{x}},& v(x)=0\\
\bconst{k},& v(x)>0\\
-\bconst{k},& v(x)<0.\\
\end{array}
\right.
\end{eqnarray*}
\end{rem}

\begin{rem}\label{rem:sv}
Observe that this map  makes sense when $k(v)=\F{2}$ or $\F{3}$.
\end{rem}
\subsection{Fields complete with respect to a discrete valuation}
Let $F$ be a field with discrete valuation $v:F^\times\to \Z$, maximal ideal $\mathcal{M}_v$ and residue field $k$. For $n\geq 1$, let 
$U_n$ denote the subgroup $1+\mathcal{M}_v^n$ of $U=U_v$. 

Observe that if $F$ is complete with respect to the valuation $v$ and if muliplication by $m$ is invertible on $k$, then $U_1^m=U_1$ since $U_n/U_{n+1}\cong k$ for all $n$. Taking $m=2$, we deduce that $U_1=U_1^2$ whenever $F$ is complete with residue characteristic not equal to $2$.  On the other hand, for the field 
$\Q_2$, one has $U_1^2=U_3$. More generally, if $F$ is any finite degree extension of $\Q_2$ it is easily seen that there exists some $n>0$ such that 
$U_n\subset U_1^2$. On the other hand, for the complete field $F=\F{2}((x))$,  $U_1^2$ has infinite index in $U_1$ and hence $U_n\not\subset U_1^2$ for all 
$n>0$.  

The following significantly improves  \cite[Theorem 6.1]{hut:sl2dv}:
\begin{thm}\label{thm:compdv}  Let $v:F^\times \to \Z$ be a discrete valuation on the field $F$. Suppose that there exists $n>0$ such that $U_n\subset U_1^2$. Then the homomorphism $\spec{v}:\rpbp{F}\to \ind{F}{k}{\rpbp{k}}$ induces an isomorphism of $\sgr{F}$-modules
\[
\aug{F}\zhalf{\rpbp{F}}\cong \aug{F}\left(\ind{F}{k}{\zhalf{\rpbp{k}}}\right).
\]
\end{thm}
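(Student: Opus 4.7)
The strategy combines the short exact sequence of Corollary \ref{cor:kv},
\[
0\to W\to \zhalf{\rpbp{F}}\xrightarrow{\spec{v}}\ind{F}{k}\zhalf{\rpbp{k}}\to 0,\qquad W:=\ep{-1}\zhalf{\kv{v}},
\]
with the character-theoretic local-global principle (Proposition \ref{prop:localglobal}). Since $\spec{v}$ is surjective, the induced map on $\aug{F}$-parts is automatically surjective, so the content of the theorem lies in injectivity, equivalently in the vanishing of $W\cap \aug{F}\zhalf{\rpbp{F}}$. By Proposition \ref{prop:localglobal}(2) this can be checked after applying $\dwn{(-)}{\chi}$ for every $\chi\in \dual{\sq{F}}$. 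For $\chi\ne \chi_0$ the identification $\dwn{\aug{F}M}{\chi}=\dwn{M}{\chi}$ used in the proof of Corollary \ref{cor:localglobal}, together with the exactness of $\dwn{(-)}{\chi}$ applied to the displayed sequence, reduces the task to proving $\dwn{W}{\chi}=0$. Once this is done for all $\chi\ne \chi_0$, Proposition \ref{prop:localglobal}(2) applied to the natural map $W\to W_{F^\times}$ yields $\aug{F}W=0$, and the $\chi_0$-component then reduces to checking injectivity of the induced map of coinvariants, which is a formal consequence of the explicit formula for $\spec{v}$.

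\textbf{The easy characters.} When $\chi(-1)=-1$ the module $\dwn{\zhalf{\rpbp{F}}}{\chi}$ vanishes outright, because the relation $\an{-1}\gpb{x}=\gpb{x}$ holds in $\rpbp{F}$ by construction. When $\chi=\chi_v$ the vanishing of $\dwn{W}{\chi_v}$ is precisely Theorem \ref{thm:dv}. This leaves characters $\chi\ne \chi_0,\chi_v$ with $\chi(-1)=1$, and since $W$ is generated as an $\sgr{F}$-module by $\ep{-1}\gpb{u}$ for $u\in U_1$, and $\ep{-1}$ acts as the identity on the $\chi$-component, the remaining task is to prove $\dwn{\gpb{u}}{\chi}=0$ in $\dwn{\zhalf{\rpbp{F}}}{\chi}$ for every $u\in U_1$.

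\textbf{The main case.} The hypothesis $U_n\subset U_1^2$ forces $\chi|_{U_n}=1$, so $\chi$ factors through a finite quotient on $U_1$. I would split according to whether $\chi|_{U_1}$ is trivial. If $\chi|_{U_1}=1$, so that $\chi$ corresponds to a nontrivial character of $k^\times/(k^\times)^2$ (possibly twisted by $\chi_v$), I would combine the identity $\bconst{F}=\gpb{u}+\gpb{1-u}$ in $\rpbp{F}$ with Lemma \ref{lem:chi} applied to $\gpb{1-u}$ (available when $\chi(1-u)=-1$, since $v(1-u)>0$), and would handle the complementary case $\chi(1-u)=1$ by feeding the pair $(u,1-u)$ into a suitable instance of the 5-term relation $S_{x,y}$ to reduce to the former. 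If $\chi|_{U_1}\ne 1$, for $u\in U_n\subset U_1^2$ I would write $u=v^2$ with $v\in U_1$ and apply the doubling formula obtained from $S_{1/v,v}$,
\[
\an{v}\gpb{v^2}=2\gpb{v}+\an{v-1}(1+\an{v})\gpb{-v}\quad \text{in } \rpbp{F};
\]
combined with Lemma \ref{lem:chi} (whose closed form $\dwn{\gpb{v}}{\chi}=\chi(v-1)\bconst{F}$ holds whenever $\chi(v)=-1$) and the 3-torsion identity $3\bconst{F}=0$ from Corollary \ref{cor:bconst}, this should give $\dwn{\gpb{u}}{\chi}=0$ at the deep level $U_n$, after which a descent via further 5-term relations propagates the vanishing up to all of $U_1$.

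\textbf{Main obstacle.} The hardest step is the subcase $\chi|_{U_1}\ne 1$: the doubling formula together with Lemma \ref{lem:chi} evaluates $\dwn{\gpb{u}}{\chi}$ only as an explicit multiple of $\bconst{F}$ and only under a character condition on $v$, so one must argue separately for $\chi(v)=1$ and carry out a non-obvious descent from $U_n$ to $U_1$, relying essentially on the 3-torsion of $\bconst{F}$ and a careful choice of auxiliary elements in the 5-term relation. That this is the delicate point is consistent with the Acknowledgements, which single out a gap in the proof of this theorem identified by the referee.
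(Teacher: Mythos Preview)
Your reduction via Proposition \ref{prop:localglobal} and Corollary \ref{cor:kv} to the vanishing of $\dwn{W}{\chi}$ for $\chi\ne\chi_0$ with $\chi(-1)=1$ is exactly right, and your treatment of $\chi=\chi_v$ via Theorem \ref{thm:dv} is fine. (Your handling of $\chi_0$ is more elaborate than necessary: since $\dwn{(\aug{F}M)}{\chi_0}=0$ for any $\zhalf{\sgr{F}}$-module $M$, there is nothing to check there.)

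The genuine gap is in the main case. Your doubling formula from $S_{1/v,v}$ is correct, but it does not yield what you claim. If $u=v^2$ with $\chi(v)=-1$, then $(1+\an{v})$ becomes $0$ and you are left with
\[
\dwn{\gpb{u}}{\chi}=-2\,\dwn{\gpb{v}}{\chi}=-2\chi(v-1)\bconst{F}=\chi(v-1)\bconst{F},
\]
which is $\pm\bconst{F}$, not $0$. So the doubling step does not kill $\dwn{\gpb{u}}{\chi}$ at the deep level; it only reduces everything to the question of whether $\bconst{F}=0$ in the $\chi$-component, which you have not established. Likewise, in your subcase $\chi|_{U_1}=1$ with $\chi(1-u)=1$, ``feed $(u,1-u)$ into a 5-term relation'' is not an argument: there is no obvious choice of $S_{x,y}$ that reduces this to the other subcase, and the descent from $U_n$ to $U_1$ is never specified.

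What the paper does instead is both simpler and uniform across your two subcases. Using $U_n\subset U_1^2$, choose $n_0\ge 1$ minimal with $\chi|_{U_{n_0}}=1$, pick $u\in U_{n_0-1}$ with $\chi(u)=-1$, and (replacing $u$ by $u^{-1}$ if necessary, since $\chi(1-u^{-1})=-\chi(1-u)$) set $p:=1-u$. Then $\chi(p)=\chi(1-p)=-1$ and $v(p)=n_0-1\ge 0$, and crucially $1-ap\in U_{n_0}$ whenever $v(a)>0$, so $\chi(1-ap)=1$. A single 5-term relation, namely $\dwn{(S_{p,ap})}{\chi}$, then evaluates every term except $\dwn{\gpb{a}}{\chi}$ via Lemma \ref{lem:chi} and yields $\dwn{\gpb{a}}{\chi}=\bconst{F}$ directly when $\chi(a)=1$; the cases $\chi(a)=-1$ are then handled by Lemma \ref{lem:chi} together with one more pass through the same relation (this is where, in the subcase $\chi(a)=\chi(1-a)=-1$, one deduces $\bconst{F}=0$ --- the identity you were hoping for but never proved). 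The existence of this pivot element $p$ with $\chi(p)=\chi(1-p)=-1$ is the idea your proposal is missing.
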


\begin{proof} By Proposition \ref{prop:localglobal} and Corollary \ref{cor:kv}, this is equivalent to the statement $\dwn{\zhalf{\kv{v}}}{\chi}=0$ for all characters $\chi \in \dual{F^\times/(F^\times)^2}$ satisfying $\chi(-1)=1$ and $\chi\not=\chi_0$ (the trivial character). To see this, apply the exact functor $\dwn{(\ )}{\chi}$ to the exact sequence of Corollary \ref{cor:kv} and observe that for any $\chi$ and for any $\zhalf{\sgr{F}}$-module $M$ we have 
\[
\dwn{(\epm{-1}M)}{\chi}=
\left\{
\begin{array}{ll}
\dwn{M}{\chi},& \chi(-1)=1\\
0,&\chi(-1)=-1.\\
\end{array}
\right.
\]

 Now, if $\chi\not=\chi_0$ and $\chi|_U$ is trivial then necessarily $\chi=\chi_v$ and this is Theorem \ref{thm:dv}.

So we can suppose that there exists $u\in U$ with $\chi(u)=-1$. Since $U_n\subset U_1^2$ we have $\chi(u)=1$ for all $u\in U_n$. Let $n_0\geq 1$ be minimal such that 
$\chi(u)=1$ for all $u\in U_{n_0}$. Thus there exists $u\in U_{n_0-1}$ with $\chi(u)=-1$ (where $U_0:=U$). Since 
\[
\chi(1-u^{-1})=\chi\left(-\frac{1-u}{u}\right)=\chi(-1)\chi(u)\chi(1-u)=-\chi(1-u),
\]
replacing $u$ by $u^{-1}$ if necessary, we can suppose that $\chi(1-u)=-1=\chi(u)$. 
Let $p: =1-u$. So $\chi(p)=-1$ and $v(p)=n_0-1\geq 0$.

 Let $a\in F^\times$ with $v(a)>0$. We will prove that $\dwn{\gpb{a}}{\chi}=\bconst{F}$ (from which the required result follows as in the proof of Theorem \ref{thm:dv}):

First consider the case $\chi(a)=1$. 
 In $\dwn{\zhalf{\rpbp{F}}}{\chi}$ we have 
\[
0=\dwn{\gpb{p}}{\chi}-\dwn{\gpb{ap}}{\chi}-\dwn{\gpb{a}}{\chi}-\dwn{\gpb{aw}}{\chi}-\dwn{\gpb{w}}{\chi}
\] 
where $w:= (1-p)/(1-ap)$ (and using $\chi(p)=-1=\chi(1-p)$, $\chi(1-p^{-1})=1$). 

Since $\chi(p)=\chi(1-p)=-1$, we have $\dwn{\gpb{p}}{\chi}=\chi(1-p)\bconst{F}=-\bconst{F}$ by Lemma \ref{lem:chi}. Similarly, $\chi(ap)=\chi(a)\chi(p)=-1$ while 
$\chi(1-ap)=1$, since $1-ap\in U_{n_0}$, so $\dwn{\gpb{ap}}{\chi}=\bconst{F}$.

We have $\chi(w)=\chi(1-p)\chi(1-ap)=-1$. So $\dwn{\gpb{w}}{\chi}=\chi(1-w)\bconst{F}$. But 
\[
1-w=p\cdot\frac{1-a}{1-ap}\implies \chi(1-w)=-\chi(1-a).
\]
So $\dwn{\gpb{w}}{\chi}=-\chi(1-a)\bconst{F}$. 

Finally, $\chi(aw)=\chi(a)\chi(w)=-1$. So $\dwn{\gpb{aw}}{\chi}=\chi(1-aw)\bconst{F}$. But $1-aw=(1-a)/(1-ap)$ so that $\chi(1-aw)=\chi(1-a)$. We deduce 
\[
0=-\bconst{F}-\bconst{F}-\dwn{\gpb{a}}{\chi}-\chi(1-a)\bconst{F}+\chi(1-a)\bconst{F}
\]
which forces $\dwn{\gpb{a}}{\chi}=\bconst{F}$, as required.

We now comsider the case $\chi(a)=-1$. Then $\dwn{\gpb{a}}{\chi}=\chi(1-a)\bconst{F}$ in $\dwn{\zhalf{\rpbp{F}}}{\chi}$ by Lemma \ref{lem:chi}. If $\chi(1-a)=1$ this gives the required conclusion.

This leaves us with the case  that $\chi(a)=-1=\chi(1-a)$.  We have  $\dwn{\gpb{a}}{\chi}=-\bconst{F}$ by Lemma \ref{lem:chi}.  Consider again the identity 
\[
0=\dwn{\gpb{p}}{\chi}-\dwn{\gpb{ap}}{\chi}-\dwn{\gpb{a}}{\chi}-\dwn{\gpb{aw}}{\chi}-\dwn{\gpb{w}}{\chi}
\] 
in $\dwn{\zhalf{\rpbp{F}}}{\chi}$.  We have $\dwn{\gpb{a}}{\chi}=\dwn{\gpb{p}}{\chi}=-\bconst{F}$. Since  $\chi(w)=\chi(1-p)\chi(1-ap)=-1$. $\chi(1-w)=-\chi(1-a)=1$ we have 
$\dwn{\gpb{w}}{\chi}=\bconst{F}$ by Lemma \ref{lem:chi}. Furthermore, $\chi(aw)=1$ and $\chi(1-aw)=\chi(1-a)\chi(1-ap)=-1$ gives $\dwn{\gpb{1-aw}}{\chi}=\bconst{F}$ and hence 
$\dwn{\gpb{aw}}{\chi}=0$.  We conclude that $0=-\dwn{\gpb{ap}}{\chi}-\bconst{F}$ and hence 
\begin{eqnarray}\label{ap1}
\dwn{\gpb{ap}}{\chi}=-\bconst{F} \mbox{ in }\dwn{\zhalf{\rpbp{F}}}{\chi}. 
\end{eqnarray}
On the other hand,  note that $\chi(ap)=1$ and $v(ap)>0$ so that 
\begin{eqnarray}\label{ap2}
\dwn{\gpb{ap}}{\chi}=\bconst{F} \mbox{ in }\dwn{\zhalf{\rpbp{F}}}{\chi}
\end{eqnarray}
by the case $\chi(a)=1$ above. Comparing (\ref{ap1}) and (\ref{ap2}), we conclude that $\bconst{F}=0$ in $\dwn{\zhalf{\rpbp{F}}}{\chi}$ and so the required identity $\dwn{\gpb{a}}{\chi}=\bconst{F}$ holds in this case also. 
\end{proof}

\begin{cor}  Let $v:F^\times \to \Z$ be a discrete valuation on the field $F$. Let $\chi\not=\chi_0\in \dual{\sq{F}}$. Suppose that $\chi(-1)=1$  and  that there exists $n>0$ such that $\chi|_{U_n}=1$.
Then 
\begin{enumerate}
\item 
\[
\dwn{\zhalf{\kv{v}}}{\chi}=0.
\]
\item Suppose further that $\chi|_{U_1}\not=1$. Then $\bconst{F}=0$ in $\dwn{\zhalf{\rpbp{F}}}{\chi}$. 
\end{enumerate}
\end{cor}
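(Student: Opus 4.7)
The plan is to recognize that this corollary is essentially a character-by-character repackaging of Theorem \ref{thm:compdv}, together with a short extra deduction for part (2).

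For part (1), I would observe that the hypothesis $U_n\subset U_1^2$ in Theorem \ref{thm:compdv} was only used to guarantee that every order-$2$ character $\chi$ of $F^\times/(F^\times)^2$ is trivial on $U_n$ (since such a $\chi$ automatically kills $U_1^2$). So the single-character hypothesis $\chi|_{U_n}=1$ is exactly what the argument of that theorem needs, and the same proof applies verbatim to a fixed $\chi$. Concretely, apply the exact functor $\dwn{(\ )}{\chi}$ to the exact sequence of Corollary \ref{cor:kv}; since $\chi(-1)=1$, one has $\dwn{(\ep{-1}\zhalf{\kv{v}})}{\chi}=\dwn{\zhalf{\kv{v}}}{\chi}$, so the claim is equivalent to the assertion that $\spec{v}$ induces an isomorphism on the $\chi$-components. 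I would split into two cases: if $\chi|_U=1$ then $\chi=\chi_v$ (as $\chi\neq\chi_0$) and the isomorphism is Theorem \ref{thm:dv}; if $\chi|_U\neq 1$, pick a minimal $n_0\geq 1$ with $\chi|_{U_{n_0}}=1$, produce $u\in U_{n_0-1}$ with $\chi(u)=\chi(1-u)=-1$ (replacing $u$ by $u^{-1}$ if necessary, using $\chi(1-u^{-1})=-\chi(1-u)$), set $p:=1-u$, and run through exactly the same five-term identity $S_{z,y}=0$ with $z=\pi^{1-2k}$, $y=\pi u$ used in the proof of Theorem \ref{thm:compdv}. The three subcases ($\chi(a)=1$; $\chi(a)=-1$, $\chi(1-a)=1$; $\chi(a)=\chi(1-a)=-1$) go through unchanged, identifying $\dwn{\gpb{u}}{\chi}$ with $\bconst{F}$ for every $u\in U_1$ and forcing $\dwn{\zhalf{\kv{v}}}{\chi}=0$.

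For part (2), this is a quick corollary of part (1) combined with Lemma \ref{lem:chi}. Since $\chi|_{U_1}\neq 1$, choose $u\in U_1$ with $\chi(u)=-1$. By construction $\gpb{u}\in\kv{v}$, so part (1) gives $\dwn{\gpb{u}}{\chi}=0$ inside $\dwn{\zhalf{\rpbp{F}}}{\chi}$ (using that the map $\dwn{\zhalf{\kv{v}}}{\chi}\to\dwn{\zhalf{\rpbp{F}}}{\chi}$ is injective by Proposition \ref{prop:localglobal} applied to the sequence of Corollary \ref{cor:kv}). Now Lemma \ref{lem:chi} yields
\[
0 \;=\; 2\dwn{\gpb{u}}{\chi} \;=\; 2\chi(u-1)\bconst{F}\qquad\text{in } \dwn{\zhalf{\rpbp{F}}}{\chi}.
\]
Since $\chi(u-1)=\pm 1$ and $2$ is a unit in $\zhalf{\Z}$, this collapses to $\bconst{F}=0$ in $\dwn{\zhalf{\rpbp{F}}}{\chi}$.

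The main obstacle is not conceptual but bookkeeping: one must confirm that the case analysis in Theorem \ref{thm:compdv} never invokes the global hypothesis $U_n\subset U_1^2$ beyond the single consequence $\chi|_{U_n}=1$. Once this is checked, both parts follow immediately, with part (2) being essentially automatic from part (1) and Lemma \ref{lem:chi}.
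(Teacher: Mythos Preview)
Your plan for part (1) is exactly the paper's argument: the hypothesis $U_n\subset U_1^2$ in Theorem \ref{thm:compdv} is only ever used to force $\chi|_{U_n}=1$, so the proof goes through verbatim for a single character. Two small descriptive slips, however: the five-term identity you cite with $z=\pi^{1-2k}$, $y=\pi u$ is from Lemma \ref{lem:chiv} (inside Theorem \ref{thm:dv}), not from Theorem \ref{thm:compdv}; the relevant identity in Theorem \ref{thm:compdv} is the one involving $p$, $ap$, $a$, $aw$, $w$ with $w=(1-p)/(1-ap)$. Also, the conclusion of that argument is $\dwn{\gpb{a}}{\chi}=\bconst{F}$ for every $a$ with $v(a)>0$, which then gives $\dwn{\gpb{u}}{\chi}=0$ (not $\bconst{F}$) for $u\in U_1$.

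For part (2) you take a slightly different route from the paper. The paper sets $a=1-u$ (adjusting $u\leftrightarrow u^{-1}$ so that $\chi(a)=-1$), notes that $v(a)>0$ and $\chi(a)=\chi(1-a)=-1$, and then appeals directly to the last subcase in the proof of Theorem \ref{thm:compdv}, where it was shown that this configuration forces $\bconst{F}=0$. Your argument instead uses part (1) as a black box to get $\dwn{\gpb{u}}{\chi}=0$ and then applies Lemma \ref{lem:chi}. This is correct and arguably cleaner, since it does not require re-entering the internals of Theorem \ref{thm:compdv}. (You do not actually need to invoke injectivity: since $\gpb{u}\in\kv{v}$ and $\dwn{\zhalf{\kv{v}}}{\chi}=0$, the image of $\gpb{u}$ in $\dwn{\zhalf{\rpbp{F}}}{\chi}$ factors through zero.)
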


\begin{proof}
\begin{enumerate}
\item  This follows from the proof of Theorem \ref{thm:compdv} since the condition $U_n\subset U_1^2$  is only used to ensure that $\chi|_{U_n}=1$ for any given $\chi$.
\item Suppose that there exists $u\in U_1$ such that $\chi(u)=-1$. Let $a=1-u$. replacing $u$ by $u^{-1}$ if necessary, we can suppose that $\chi(a)=-1$. Thus 
$v(a)>0$ and $\chi(a)=\chi(1-a)=-1$. By the last case considered in the proof of Theorem \ref{thm:compdv}, it follows that $\bconst{F}=0$ in $\dwn{\zhalf{\rpbp{F}}}{\chi}$. 
\end{enumerate}
\end{proof}

\begin{cor}\label{cor:compdv}
Let $v:F^\times \to \Z$ be a discrete valuation on the field $F$ with residue field $k$. Suppose that there exists $n>0$ such that $U_n\subset U_1^2$. Then there is an isomorphism of 
$\sgr{k}$-modules
\[
\hot{F}{\zhalf{\Z}}\cong \hot{k}{\zhalf{\Z}}\oplus \zhalf{\rpbp{k}}.
\]
\end{cor}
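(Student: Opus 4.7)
The plan is to chain together Theorem~\ref{thm:rpbp}, Theorem~\ref{thm:compdv}, and an explicit computation of $\aug{F}\ind{F}{k}M$ for the $\sgr{k}$-module $M:=\zhalf{\rpbp{k}}$. Theorem~\ref{thm:rpbp} provides the identifications
\[
\hot{F}{\zhalf{\Z}}\cong \aug{F}\zhalf{\rpbp{F}}\quad\mbox{and}\quad\hot{k}{\zhalf{\Z}}\cong \aug{k}M,
\]
while the hypothesis $U_n\subset U_1^2$ allows Theorem~\ref{thm:compdv} to identify $\aug{F}\zhalf{\rpbp{F}}$ with $\aug{F}\ind{F}{k}M$ as $\sgr{F}$-modules. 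Thus the corollary reduces to producing an isomorphism of $\sgr{k}$-modules
\[
\aug{F}\ind{F}{k}M\cong \aug{k}M\oplus M.
\]

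To carry out the remaining calculation, I fix a uniformiser $\pi$, so that the choice induces $F^\times/(F^\times)^2\cong\{1,\an{\pi}\}\times U_v/(U_v)^2$, making $\sgr{F}$ a free module of rank two over $\Z[U_v/(U_v)^2]$ with basis $\{1,\an{\pi}\}$. Since $M$ is an $\sgr{k}$-module and $U_v^2$ has image in $(k^\times)^2$, every element of $U_v^2$ acts trivially on $M$, so the tensor product $\Z[U_v/(U_v)^2]\otimes_{\Z[U_v]}M$ collapses to $M$ itself. Consequently
\[
\ind{F}{k}M=M\oplus\an{\pi}\otimes M
\]
as abelian groups, with $\an{u}$ ($u\in U_v$) acting diagonally through $\an{\bar u}\in\sgr{k}$ and $\an{\pi}$ swapping the two summands. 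A short computation of the augmentation action then shows
\[
\aug{F}\ind{F}{k}M=\{(a,\an{\pi}\otimes b)\in M\oplus\an{\pi}\otimes M\ :\ a+b\in\aug{k}M\},
\]
because $(\an{u}-1)$ contributes $\aug{k}M$ in each summand, and $(\an{\pi}-1)\cdot m=(-m,\,\an{\pi}\otimes m)$ supplies the antidiagonal.

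With this presentation in hand, the candidate isomorphism is the $\sgr{k}$-linear map
\[
\phi\colon\aug{F}\ind{F}{k}M\longrightarrow \aug{k}M\oplus M,\qquad (a,\an{\pi}\otimes b)\longmapsto (a+b,\,b),
\]
whose inverse $(c,b)\mapsto (c-b,\,\an{\pi}\otimes b)$ lands in $\aug{F}\ind{F}{k}M$ precisely because $(c-b)+b=c\in\aug{k}M$. Substituting $M=\zhalf{\rpbp{k}}$ and invoking the two identifications from Theorem~\ref{thm:rpbp} completes the argument. The main bookkeeping obstacle, rather than substance, is justifying that both sides carry compatible $\sgr{k}$-module structures; this rests on the observation (implicit in the proof of Theorem~\ref{thm:compdv}) that $U_1$ acts trivially on $\aug{F}\zhalf{\rpbp{F}}$, so that the $\sgr{F}$-action descends through $\sgr{F}/\<\an{u}-1:u\in U_1\>\cong \sgr{k}[\an{\pi}]/((\an{\pi})^2-1)$, under which $\sgr{k}$ embeds naturally.
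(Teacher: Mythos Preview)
Your proof is correct and follows essentially the same route as the paper's: both invoke Theorem~\ref{thm:rpbp} and Theorem~\ref{thm:compdv} to reduce to computing $\aug{F}\bigl(\ind{F}{k}\zhalf{\rpbp{k}}\bigr)$ as an $\sgr{k}$-module, and then identify this with $\aug{k}\zhalf{\rpbp{k}}\oplus\zhalf{\rpbp{k}}$. The only difference is that the paper cites this last identification as \cite[Lemma~5.4]{hut:sl2dv}, whereas you carry out the splitting $(a,\an{\pi}\otimes b)\mapsto (a+b,b)$ explicitly; your final remark about the $\sgr{k}$-structure is slightly overcautious, since the $\sgr{F}$-isomorphism with $\aug{F}\ind{F}{k}M$ already transports the descended $\sgr{k}$-action automatically.
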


\begin{proof}
By Theorem \ref{thm:compdv} 
\[
\hot{F}{\zhalf{\Z}}\cong \aug{F}\zhalf{\rpbp{F}}\cong \aug{F}\left(\zhalf{\ind{F}{k}\rpbp{k}}\right)
\]
(as $\sgr{F}$-modules).

By \cite[Lemma 5.4]{hut:sl2dv}, there is an $\sgr{k}$-module isomorphism
\[
\aug{F}\left(\zhalf{\ind{F}{k}\rpbp{k}}\right)\cong \aug{k}\left(\zhalf{\rpbp{k}}\right)\oplus\zhalf{\rpbp{k}}\cong \hot{k}{\zhalf{\Z}}\oplus \zhalf{\rpbp{k}}. 
\]
\end{proof}

\begin{cor}\label{cor:resfin} Let $F$ be a field with discrete valuation $v$ satisfying 
\begin{enumerate}
\item the residue field $k=k(v)$ is either finite or quadratically closed or real closed, and
\item there exists $n\geq 1$ such that $U_n\subset U_1^2$.
\end{enumerate}
Then we have natural isomorphisms of $\sgr{F}$-modules
\[
\xymatrix{
\hot{F}{\zhalf{\Z}}
\ar^-{\cong}[r]
&\dwn{\ho{3}{\spl{2}{F}}{\zhalf{\Z}}}{\chi_v}
\ar^-{\cong}_-{\sspec{v}}[r]
&\rfmod{\zhalf{\pb{k}}}{v}\\
}
\]
and thus there is a (split) exact sequence of $\sgr{F}$-modules
\[
0\to \rfmod{\zhalf{\pb{k}}}{v}\to \ho{3}{\spl{2}{F}}{\zhalf{\Z}}\to \zhalf{\kind{F}}\to 0. 
\]
\end{cor}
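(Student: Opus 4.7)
The strategy is to decompose $M:=\ho{3}{\spl{2}{F}}{\zhalf{\Z}}$ into $\chi$-eigenspaces via Proposition \ref{prop:localglobal} and show that only $\chi_0$ and $\chi_v$ contribute. Since $\an{-1}$ acts trivially on $M$ (Theorem \ref{thm:rpbp}), one may restrict to characters $\chi$ with $\chi(-1)=1$. For $\chi\neq \chi_0$, Corollary \ref{cor:localglobal} identifies $\dwn{M}{\chi}\cong \dwn{\zhalf{\rpbp{F}}}{\chi}$, so the entire vanishing argument will be carried out inside $\zhalf{\rpbp{F}}$. Note that the assumption $U_n\subset U_1^2$ forces every character of $\sq{F}$ to be trivial on $U_n$.

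The core step is to prove $\dwn{\zhalf{\rpbp{F}}}{\chi}=0$ for $\chi_0\neq \chi\neq \chi_v$ with $\chi(-1)=1$. Part (1) of the first corollary following Theorem \ref{thm:compdv}, applied with the given $n$, gives $\dwn{\zhalf{\kv{v}}}{\chi}=0$; combining this with Corollary \ref{cor:kv} and the exactness in Proposition \ref{prop:localglobal}(1) yields
\[
\dwn{\zhalf{\rpbp{F}}}{\chi}\cong \dwn{\left(\ind{F}{k}\zhalf{\rpbp{k}}\right)}{\chi}=\sgr{\chi}\otimes_{\Z[U_v]}\zhalf{\rpbp{k}},
\]
inside which the relation $1\otimes \bar{u}\,m=\chi(u)\otimes m$ holds for every $u\in U_v$, $m\in \rpbp{k}$.

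I would then split into two sub-cases. If $\chi|_{U_1}$ is non-trivial, pick $u\in U_1$ with $\chi(u)=-1$; since $\bar u = 1$ in $k^\times$, the relation becomes $1\otimes m = -1\otimes m$, hence $2m=0$ and thus $m=0$ after inverting $2$. If $\chi|_{U_1}=1$ then $\chi|_{U_v}$ descends to a non-trivial character $\tau$ of $k^\times/(k^\times)^2$, and the three residue-field hypotheses come in: for $k$ quadratically closed no such $\tau$ exists; for $k$ real closed $\tau(-1)=-1$, contradicting $\chi(-1)=1$; for $k$ finite, the remark at the end of Section \ref{sec:rsc} ensures that $k^\times$ acts trivially on $\rpbp{k}$, so choosing $u\in U_v$ with $\tau(\bar u)=-1$ gives $1\otimes m=\chi(u)\otimes \bar u\,m=-1\otimes m$, again forcing vanishing after inverting $2$.

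With this vanishing in hand, Proposition \ref{prop:localglobal} yields $\hot{F}{\zhalf{\Z}}=\aug{F}\zhalf{M}\cong \dwn{M}{\chi_v}$, which is the first claimed isomorphism. The second is Theorem \ref{thm:dv} combined with Corollary \ref{cor:localglobal}, realised by $\sspec{v}$. The short exact sequence $0\to \hot{F}{\zhalf{\Z}}\to M\to \zhalf{\kind{F}}\to 0$ splits because the composite $M\twoheadrightarrow \dwn{M}{\chi_v}\cong \hot{F}{\zhalf{\Z}}$ is the identity on $\hot{F}{\zhalf{\Z}}$, hence a retraction of the inclusion. The main obstacle is organising the three residue-field cases cleanly; the tensor-product collapse in the $\chi|_{U_1}$-non-trivial case and in the finite sub-case of the $\chi|_{U_1}=1$ case is essentially the same computation, while the real-closed and quadratically closed sub-cases rest only on elementary observations about squares in the residue field.
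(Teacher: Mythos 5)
Your proposal is correct and follows essentially the same route as the paper: Theorem \ref{thm:rpbp} plus Theorem \ref{thm:compdv} (via its corollary) to reduce to the induced module $\ind{F}{k}\zhalf{\rpbp{k}}$, the character-theoretic local--global principle to isolate $\chi_v$, Theorem \ref{thm:dv} for the identification with $\rfmod{\zhalf{\pb{k}}}{v}$, and the retraction argument for the splitting. The only cosmetic difference is that the paper first uses the residue-field hypotheses to show $k^\times$ acts trivially on $\zhalf{\rpbp{k}}$ and then runs the character check once in a separate lemma (Lemma \ref{lem:mv}), whereas you fold those hypotheses into the character-by-character case analysis; the content is identical.
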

\begin{proof} Recall first that $\hot{\spl{2}{F}}{\zhalf{\Z}}\cong \aug{F}\zhalf{\rpbp{F}}$ as $\sgr{F}$-module by Theorem \ref{thm:rpbp}.
 
On the one hand, by Theorem \ref{thm:dv}, the map $\spec{v}$ induces an isomorphism of $\sgr{F}$-modules. 
\[
\dwn{\ho{3}{\spl{2}{F}}{\zhalf{\Z}}}{\chi_v}\cong \zhalf{\pb{k}}\{ v\}.
\]

On the other hand, Theorem \ref{thm:compdv} gives an isomorphism of $\sgr{F}$-modules
\[
\hot{\spl{2}{F}}{\zhalf{\Z}}\cong  \aug{F}\left(\ind{F}{k}{\zhalf{\rpbp{k}}}\right).
\]
 The conditions on the residue field $k$ imply that $\aug{k}\zhalf{\rpbp{k}}=0$ and hence that $\zhalf{\rpbp{k}}=\zhalf{\pb{k}}$ with trivial 
$\sgr{k}$-module structure.  Thus the result follows from:
\begin{lem}\label{lem:mv}
 Let $M$ be an $\sgr{k}$-module with trivial action of $k^\times$. Then
\[
\aug{F}\left(\zhalf{\ind{F}{k}M}\right) \cong \dwn{\left(\zhalf{\ind{F}{k}M}\right)}{\chi_v}\cong \rfmod{\zhalf{M}}{v}.
\]
\end{lem}

\textit{Proof of Lemma \ref{lem:mv}:}  By Proposition \ref{prop:localglobal}, to prove that the natural homomorphism, $S$ say,  
\[
\aug{F}\left(\zhalf{\ind{F}{k}M}\right) \to \dwn{\left(\aug{F}\left(\zhalf{\ind{F}{k}M}\right)\right)}{\chi_v}\cong \dwn{\left(\zhalf{\ind{F}{k}M}\right)}{\chi_v}
\]
is an isomorphism, it is enough to prove that $\dwn{S}{\chi}$ is an isomorphism for all $\chi\in \dual{F^\times/(F^\times)^2}$. 

When $\chi=\chi_0$ both domain and target of $\dwn{S}{\chi}$ are $0$. Likewise, if there exists $u\in U_v$ for which $\chi(u)=-1$, then (since $M$ has trivial $\Z[U]$-module structure by hypothesis), $\an{u}$ acts as multiplication by both $1$ and $-1$ on the target and domain, so that they vanish. This leaves only $\chi=\chi_v$, and $\dwn{S}{\chi_v}$ is tautologically an isomorphism. 
\end{proof}
\begin{rem} Note that the $\sgr{F}$-module direct sum decomposition in Corollary \ref{cor:resfin} is just the decomposition into $+1$ and $-1$-eigenspaces for the action of $\an{\pi}$ where $\pi$ is any element of $F$ with $v(\pi)=1$.
\end{rem}
\begin{exa}  Let $F$ be a local field with finite residue field. Suppose that either $F$ has characteristic $0$ or $\mathrm{char}(k)\not=2$. Then 
\[
\ho{3}{\spl{2}{F}}{\zhalf{\Z}}\cong \zhalf{\kind{F}}\oplus \rfmod{\zhalf{\pb{k}}}{v}
\]
as $\sgr{F}$-modules. In particular, for all primes $p$ we have 
\begin{eqnarray*}
\ho{3}{\spl{2}{\Q_p}}{\zhalf{\Z}}&\cong &\zhalf{\kind{\Q_p}}\oplus \hot{\Q_p}{\zhalf{\Z}}\\
&\cong & \zhalf{\kind{\Q_p}}\oplus \rfmod{\zhalf{\pb{\F{p}}}}{v}.\\
\end{eqnarray*}
\end{exa} 
\begin{exa}Consider the case $F=\laurs{\C}{x}$. Then we have 
\[
\ho{3}{\spl{2}{\laurs{\C}{x}}}{\zhalf{\Z}}\cong \zhalf{\kind{\laurs{\C}{x}}}\oplus \rfmod{{\pb{\C}}}{v}
\]
(since $\pb{\C}$ is a $\Q$-vector space). 
\end{exa}
\section{The field $\Q$} \label{sec:Q}
For a field $F$ with discrete valuation $v$, we let $\sspec{v}$ denote the composite $\sgr{F}$-module homomorphism 
\[
\ho{3}{\spl{2}{F}}{\zhalf{\Z}}\to \dwn{\left(\zhalf{\rpbp{F}}\right)}{\chi_v}\cong \rfmod{\zhalf{\pb{k(v)}}}{v}.
\]
(See remark \ref{rem:sv} above.) By abuse of notation, we will use the same symbol for the $\sspec{v}$ restricted to $\hot{F}{\zhalf{\Z}}$.

\begin{thm}\cite[Theorem 5.1]{hut:rbl11}
\label{thm:global}
Let $F$ be a field and let $\mathcal{V}$ be a family of discrete valuations on $F$ satisfying 
\begin{enumerate}
\item For any $x\in F^\times$, $v(x)=0$ for all but finitely many $v\in\mathcal{V}$.
\item  The map 
\[
F^\times\to\oplus_{v\in\mathcal{V}}\modtwo{\Z},\quad a\mapsto \{ v(a)\}_{v\in \mathcal{V}}
\]
is surjective.
\end{enumerate}
 Then the maps $\{ \sspec{v}\}_{v\in \mathcal{V}}$ induce a natural  surjective homomorphism 

\[
\hot{F}{\zhalf{\Z}}\cong  \aug{F}\zhalf{\rpbp{F}}\to \bigoplus_{v\in\mathcal{V}}\rfmod{\zhalf{\pb{k(v)}}}{v}.
\]
\end{thm}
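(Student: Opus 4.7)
The plan is to prove the statement by means of the character-theoretic local-global principle (Proposition \ref{prop:localglobal}). After identifying $\hot{F}{\zhalf{\Z}}\cong \aug{F}\zhalf{\rpbp{F}}$ via Theorem \ref{thm:rpbp}, the family of $\sgr{F}$-module maps $\sspec{v}$ assembles into a homomorphism
\[
\Phi:\aug{F}\zhalf{\rpbp{F}}\longrightarrow \prod_{v\in\mathcal{V}}\rfmod{\zhalf{\pb{k(v)}}}{v},
\]
and the task splits into two parts: showing that the image of $\Phi$ lies in the direct sum (finiteness of support) and showing that $\Phi$ is surjective onto the direct sum.

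For the finiteness of support, I would observe that $\aug{F}\zhalf{\rpbp{F}}$ is generated as a $\zhalf{\Z}$-module by products $\pf{a_1}\cdots\pf{a_k}\gpb{x}$. For each such generator, hypothesis (1) guarantees that $v(a_i)=v(x)=v(1-x)=0$ for all but finitely many $v\in\mathcal{V}$. At any such $v$, the recipe for $\sspec{v}$ together with $\pf{a_i}\mapsto \chi_v(a_i)-1 = 0$ shows that the generator maps to $0$ in the $v$-component, giving the required finite support.

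For surjectivity, by Proposition \ref{prop:localglobal}(2) it suffices to verify that $\dwn{\Phi}{\chi}$ is surjective for every $\chi\in\dual{F^\times/(F^\times)^2}$. Since hypothesis (2) means that the characters $\chi_v$ are linearly independent in $\dual{F^\times/(F^\times)^2}$, for any $\chi\neq \chi_{v'}$ and any $v'\in\mathcal{V}$ there is some $a\in F^\times$ with $\chi(a)\neq \chi_{v'}(a)$; this forces $\dwn{\rfmod{\zhalf{\pb{k(v')}}}{v'}}{\chi}=0$ (because the two actions of $\an{a}$ must agree, and $2$ is invertible). Consequently:
\begin{itemize}
\item if $\chi=\chi_0$ or $\chi\neq\chi_v$ for every $v\in\mathcal{V}$, then the $\chi$-localization of the target vanishes and there is nothing to prove;
\item if $\chi=\chi_v$ for some (necessarily unique) $v\in\mathcal{V}$, then the $\chi_v$-localization of the target collapses to the single component $\rfmod{\zhalf{\pb{k(v)}}}{v}$, and the map $\dwn{\Phi}{\chi_v}$ agrees with the isomorphism
\[
\dwn{\aug{F}\zhalf{\rpbp{F}}}{\chi_v}\;\cong\;\dwn{\zhalf{\rpbp{F}}}{\chi_v}\;\xrightarrow{\;\spec{v}\;}\;\rfmod{\zhalf{\pb{k(v)}}}{v}
\]
furnished by Theorem \ref{thm:dv} (the first identification uses that, for $\chi\neq\chi_0$, the coinvariants $\dwn{M_{F^\times}}{\chi}$ vanish, as in the proof of Corollary \ref{cor:localglobal}).
\end{itemize}

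The main conceptual step, and the place where the hypotheses enter essentially, is the character-independence argument in the third paragraph: hypothesis (2) is exactly what is needed to make the $\chi_v$'s pairwise distinct in $\dual{\sq{F}}$ so that the localized targets become ``diagonal'', while hypothesis (1) ensures that the product of the $\sspec{v}$'s actually factors through the direct sum. Once these two inputs are combined with Theorem \ref{thm:dv} and Proposition \ref{prop:localglobal}, surjectivity follows formally. No further computation inside $\rpbp{F}$ should be required beyond what is already encoded in Theorem \ref{thm:dv}.
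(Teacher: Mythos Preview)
The paper does not prove Theorem~\ref{thm:global}; it is quoted from \cite[Theorem~5.1]{hut:rbl11}, so there is no in-paper proof to compare against. The remark immediately following the statement does explain why the image lands in the direct sum rather than the product, and your finite-support argument (using hypothesis~(1) together with $\sspec{v}(\pf{a}\gpb{b})=0$ whenever $v(a)$ is even) matches that remark essentially verbatim.

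Your surjectivity argument via Proposition~\ref{prop:localglobal} and Theorem~\ref{thm:dv} is correct. It is in fact exactly the mechanism the paper uses in Section~\ref{sec:main} to prove the stronger Theorem~\ref{thm:main} in the case $F=\Q$: one computes $\dwn{(\text{target})}{\chi}$ character by character, observes it vanishes unless $\chi=\chi_v$ for some $v\in\mathcal{V}$, and then appeals to Theorem~\ref{thm:dv} for that $v$. Hypothesis~(2) enters just where you say, to guarantee that the $\chi_v$ are pairwise distinct (and $\neq\chi_0$), so that at most one summand survives localization at $\chi$. A minor quibble: you invoke ``linear independence'' of the $\chi_v$, but all that is actually used is pairwise distinctness, and the statement ``for any $\chi\neq\chi_{v'}$ there is $a$ with $\chi(a)\neq\chi_{v'}(a)$'' is simply the definition of distinct characters and needs no hypothesis at all.

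In effect you have observed that, within the framework of this paper, Theorem~\ref{thm:dv} together with Proposition~\ref{prop:localglobal} already yields Theorem~\ref{thm:global}, so the external citation to \cite{hut:rbl11} is logically redundant here.
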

\begin{rem} On the face of it, the collection of maps $\{\sspec{v}\}_{v\in \mathcal{V}}$ as above induces an $\sgr{F}$-module homomorphism with target the \emph{product} -- rather than the direct sum -- of the scissors congruence groups:
\[
\ho{3}{\spl{2}{F}}{\Z}\to \rpbp{F}\to \prod_{v\in \mathcal{V}} \rfmod{\pb{k(v)}}{v}.
\]
However, when we restrict to $\hot{F}{\Z}$ and tensor with $\zhalf{\Z}$ the image lies in the direct sum instead, in view of  the isomorphism $\hot{F}{\zhalf{\Z}}\cong \aug{F}\zhalf{\rpbp{F}}$ and the fact that $S_v(\pf{a}\gpb{b})=\pf{a}S_v(\gpb{b})=0$ whenever $v(a)$ is even. 
\end{rem}

Specializing to the case  $F=\Q$ and $\mathcal{V}=\primes$, the set of all primes, we obtain a surjective homomorphism $\bar{S}=\{ \sspec{p}\}_{p\in \primes}$ of $\sgr{\Q}$-modules
\begin{eqnarray}\label{map}
\hot{\Q}{\zhalf{\Z}}\to \bigoplus_{p\in\primes}\rfmod{\zhalf{\pb{\F{p}}}}{p}.
\end{eqnarray}

In the next section we will prove the following \emph{main theorem}:
\begin{thm}\label{thm:main}  The map 
\[
\bar{S}:\hot{\Q}{\zhalf{\Z}}\to \bigoplus_{p\in \primes}\rfmod{\zhalf{\pb{\F{p}}}}{p}
\] 
is an isomorphism of $\sgrp{\Q}$-modules.
\end{thm}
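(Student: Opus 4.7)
The surjectivity of $\bar{S}$ is already given by Theorem \ref{thm:global}, so the task is to prove injectivity. The plan is to invoke the character-theoretic local-global principle (Proposition \ref{prop:localglobal}) with $G=\sq{\Q}$: it suffices to check that the induced map $\dwn{\bar{S}}{\chi}$ is an isomorphism for every $\chi\in\dual{\sq{\Q}}$. Several classes of characters are dispatched by direct inspection. If $\chi(-1)=-1$, both sides of $\dwn{\bar{S}}{\chi}$ vanish, because $\an{-1}$ acts as $+1$ on $\rpbp{\Q}$ by construction and $v_p(-1)=0$ for every prime $p$. If $\chi=\chi_0$, the right-hand side vanishes because $\an{p}$ acts as $-1$ on each summand $\rfmod{\zhalf{\pb{\F{p}}}}{p}$ whereas $\chi_0(p)=1$; for the left, identify $\hot{\Q}{\zhalf{\Z}}$ with $\aug{\Q}\zhalf{\rpbp{\Q}}$ by Theorem \ref{thm:rpbp}, and use the long exact sequence for $\sq{\Q}$-coinvariants applied to
\[
0\to \hot{\Q}{\zhalf{\Z}}\to \ho{3}{\spl{2}{\Q}}{\zhalf{\Z}}\to \zhalf{\kind{\Q}}\to 0,
\]
together with the identification $\ho{3}{\spl{2}{\Q}}{\zhalf{\Z}}_{\sq{\Q}}\cong\zhalf{\kind{\Q}}$, to exhibit $\hot{\Q}{\zhalf{\Z}}_{\sq{\Q}}$ as a quotient of $H_1(\sq{\Q},\zhalf{\kind{\Q}})$; this vanishes because $\sq{\Q}$ is $2$-torsion and $2$ is invertible in the coefficients.

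Henceforth assume $\chi\not=\chi_0$ and $\chi(-1)=1$, and set $S(\chi):=\{p\in\primes:\chi(p)=-1\}$, which is nonempty. On the right-hand side, $\dwn{\rfmod{\zhalf{\pb{\F{p}}}}{p}}{\chi}$ is nonzero only when $\chi=\chi_p$ (so $|S(\chi)|=1$), in which case it equals $\zhalf{\pb{\F{p}}}$. Thus when $|S(\chi)|=1$, say $\chi=\chi_q$, the task is to show that
\[
\dwn{\sspec{q}}{\chi_q}:\dwn{\zhalf{\rpbp{\Q}}}{\chi_q}\longrightarrow \zhalf{\pb{\F{q}}}
\]
is an isomorphism; when $|S(\chi)|\geq 2$ the right-hand side vanishes, and the task is to show $\dwn{\zhalf{\rpbp{\Q}}}{\chi}=0$.

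For the single-prime case $\chi=\chi_q$, the plan is to factor $\dwn{\sspec{q}}{\chi_q}$ through the completion. The embedding $\Q\hookrightarrow\Q_q$ induces a homomorphism $\dwn{\zhalf{\rpbp{\Q}}}{\chi_q}\to\dwn{\zhalf{\rpbp{\Q_q}}}{\chi_{v_q}}$, and Theorem \ref{thm:dv} identifies the target with $\zhalf{\pb{\F{q}}}$ via the local specialization. The composite equals $\dwn{\sspec{q}}{\chi_q}$ and is surjective by Theorem \ref{thm:global}, so the issue is injectivity of the first map. I would establish this by showing that $\dwn{\zhalf{\rpbp{\Q}}}{\chi_q}$ is generated as an abelian group by $\{\dwn{\gpb{\bar x}}{\chi_q} : \bar x\in\F{q}^\times\}$ together with $\bconst{\Q}$: generators $\gpb{x}$ with $\chi_q(x)=-1$ (i.e.\ with odd $v_q(x)$) reduce to multiples of $\bconst{\Q}$ by Lemma \ref{lem:chi}, while generators with $v_q(x)=0$ reduce to $\gpb{\bar x}$ via repeated five-term relations with $y$ appropriately specialized. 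Comparison with $\zhalf{\pb{\F{q}}}$ then yields the injectivity.

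The main obstacle is the multi-prime case $|S(\chi)|\geq 2$, where I must prove $\dwn{\zhalf{\rpbp{\Q}}}{\chi}=0$. The strategy has two parts. First, reduce every $\dwn{\gpb{x}}{\chi}$ to a $\Z$-multiple of $\bconst{\Q}$: when $\chi(x)=-1$ this is Lemma \ref{lem:chi}; when $\chi(x)=1$, choose an auxiliary $y\in\Q^\times$ with $\chi(y)=-1$ and with the $\chi$-values of $y-1,\ y-x,\ 1-y$ controlled so that every term in the five-term relation $S_{x,y}$ except $\dwn{\gpb{x}}{\chi}$ itself reduces to a multiple of $\bconst{\Q}$ via Lemma \ref{lem:chi}. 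The freedom to choose such $y$ is precisely what $|S(\chi)|\geq 2$ provides: with two or more primes contributing to $\chi$, one can independently prescribe the signs of $y$ and $y-1$. Second, and the crux of the proof, show that $\bconst{\Q}=0$ in $\dwn{\zhalf{\rpbp{\Q}}}{\chi}$. Since $\bconst{\Q}$ is $3$-torsion in $\rpbp{\Q}$ by Proposition \ref{prop:bconst}, it suffices to produce a relation $n\cdot\bconst{\Q}=0$ with $\gcd(n,3)=1$. I would obtain this by choosing a specific $x\in\Q^\times\setminus\{1\}$ with carefully arranged $\chi$-values on $x$, $1-x$, and related quantities, combining the five-term relation with Lemma \ref{lem:chi}, Corollary \ref{cor:bconst}, and the identity $\bconst{\Q}=\gpb{x}+\gpb{1-x}$, and tracking the cancellations. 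Constructing such an $x$ in $\Q^\times$, and verifying that the cancellations leave a nontrivial coefficient of $\bconst{\Q}$, is where the detailed arithmetic of $\Q^\times$ -- specifically, the flexibility to realize any prescribed pattern of prime signs -- enters the argument decisively.
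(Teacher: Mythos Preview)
Your overall architecture---reduce via Proposition~\ref{prop:localglobal} to checking each character, then split into the trivial, single-prime, and multi-prime cases---is exactly the paper's. But two of the three cases are handled differently, and in one of them your sketch has a genuine gap.

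\textbf{Single-prime case.} You take a detour through the completion $\Q_q$ and then must argue separately that $\dwn{\zhalf{\rpbp{\Q}}}{\chi_q}\to\dwn{\zhalf{\rpbp{\Q_q}}}{\chi_{v_q}}$ is injective; your justification for this (``repeated five-term relations'') is only a gesture. In fact Theorem~\ref{thm:dv} has no completeness hypothesis: it applies directly to $F=\Q$ with the discrete valuation $v_p$, giving the isomorphism $\dwn{\zhalf{\rpbp{\Q}}}{\chi_p}\cong\rfmod{\zhalf{\pb{\F{p}}}}{p}$ in one stroke. This is what the paper does.

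\textbf{Multi-prime case.} Here your plan is (i) reduce every $\dwn{\gpb{x}}{\chi}$ to a multiple of $\bconst{\Q}$, then (ii) kill $\bconst{\Q}$. Step (i) has a real gap: when $\chi(x)=\chi(1-x)=1$, neither Lemma~\ref{lem:chi} nor the identity $\bconst{F}=\gpb{x}+\gpb{1-x}$ helps, and a single five-term relation $S_{x,y}$ with $\chi(y)=-1$ will always leave exactly one of the last two terms with $\chi$-value $+1$ (since their ratio is $y/x$). So you have replaced one unreduced generator by another, with no evident descent. Controlling enough auxiliary $\chi$-values simultaneously to make this terminate is possible but is precisely the hard part, and your proposal does not supply it. The paper takes a different and more systematic route: it proves (Lemma~\ref{lem:ell} through Proposition~\ref{prop:supp}) that $\dwn{\gpb{a}}{\chi}=\dwn{\gpb{a+t}}{\chi}$ for all $t\in\Z$, by producing two elements $\ell$ with $\chi(\ell)=-1$, $\chi(1-\ell)=1$ and coprime ``steps'' (one from each of two primes in $\mathrm{supp}(\chi)$), and then runs the Euclidean algorithm on $a=r/s$ to force $\dwn{\gpb{a}}{\chi}=\dwn{\gpb{1}}{\chi}=0$. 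This bypasses the need to prove $\bconst{\Q}=0$ separately and handles the stubborn $\chi(x)=\chi(1-x)=1$ case uniformly.
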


\begin{rem}
Since $\an{-1}\in \sgr{\Q}$ acts trivially on both of the modules in (\ref{map}), this is a map of $\sgrp{\Q}$-modules where 
\[
\sgrp{\Q}:= \Z[\Q^{\times}/\pm (\Q^\times)^2]=\Z[\Q_+/\Q_+^2].
\]
\end{rem}

\begin{rem}  We note that the image of the map 
\[
\bar{S}:\ho{3}{\spl{2}{\Q}}{\zhalf{\Z}}\to \prod_{p\in\primes}\rfmod{\zhalf{\pb{\F{p}}}}{p}
\]
does \emph{not} lie in the direct sum $\bigoplus_{p\in \primes}\rfmod{\zhalf{\pb{\F{p}}}}{p}$:

Let $t\in\spl{2}{\Z}$ be the element of order $3$ 
\[
t:= \left(
\begin{array}{ll}
-1&1\\
-1&0\\
\end{array}
\right)\in \spl{2}{\Z}.
\]

Denote (also) by $\bconst{\Q}\in \ho{3}{\spl{2}{\Q}}{\Z}$ the image of $1\in \Z/3=\ho{3}{\an{t}}{\Z}$ under the map induced by the inclusion $\an{t}\to \spl{2}{\Q}$. Then $\bconst{\Q}\in 
\ho{3}{\spl{2}{\Q}}{\Z}$ maps to $\bconst{\Q}\in \rpbp{\Q}$ (\cite[Remark 3.14]{hut:rbl11}). Note that $\bar{S}_p(\bconst{\Q})=\bconst{\F{p}}\in \qpb{\F{p}}$ for all primes $p$. Furthermore, $\bconst{\F{p}}\not=0$ 
precisely when $p\equiv 2\mod{3}$ (i.e., precisely when $3|p+1$), by \cite[Lemma 7.11]{hut:cplx13}.

In particular, the image of $\bconst{\Q}$ under the map $\{\bar{ S}_p\}_p$ lies in the product, but not the direct sum, of the scissors congruence groups of the residue fields. 
\end{rem} 
\begin{rem}
In view of Corollary \ref{cor:resfin} above, Theorem \ref{thm:main} can be stated equivalently as follows:
The natural homomorphism  $\ho{3}{\spl{2}{\Q}}{\Z}\to \prod_p\ho{3}{\spl{2}{\Q_p}}{\Z}$ induces an isomorphism
\[
\xymatrix{
\hot{\Q}{\zhalf{\Z}}\ar^-{\cong}[r]&\bigoplus_p \hot{\Q_p}{\zhalf{\Z}}.
}
\]
\end{rem}
\begin{rem}\label{rem:split}  We observe that -- unlike in the local case -- the short exact sequence of $\sgr{\Q}$-modules 
\[
0\to \hot{\Q}{\zhalf{\Z}}\to \ho{3}{\spl{2}{\Q}}{\zhalf{\Z}}\to \zhalf{\kind{\Q}}\to 0
\]
has no $\sgr{\Q}$-splitting (it is $\Z$-split, however):

Suslin's map gives a canonical isomorphism $\ptor{\kind{\Q}}{3}\cong\ptor{\bl{\Q}}{3}=\Z/3\cdot\bconst{\Q}\subset \qpb{\Q}$ and we can let $\bconst{\Q}$ also denote the corresponding element of $\kind{\Q}$. 

Recall that 
$\sgr{\Q}$ acts trivially on $\kind{\Q}$. Suppose that there were an $\sgr{\Q}$-module splitting $j:\zhalf{\kind{\Q}}\to \ho{3}{\spl{2}{\Q}}{\zhalf{\Z}}$. Then we would have $j(\bconst{\Q})=\bconst{\Q}+h$ for some 
$h\in \hot{\Q}{\zhalf{\Z}}$. We must have $\sgr{\Q}$ acts trivially on $j(\bconst{\Q})$ and hence $\pf{p}j(\bconst{\Q})=\pf{p}(\bconst{\Q}+h)=0$ for all primes $p$. However, we can  choose a prime $p$ such that 
$\pf{p}h=0$ in $\hot{\Q}{\zhalf{\Z}}$ and $p\equiv 2\pmod{3}$. Then $\bar{S}_p(\pf{p}(\bconst{\Q}+h))=\pf{p}\bconst{\F{p}}=-2\bconst{\F{p}}\not=0$, giving us a contradiction. So no such $\sgr{\Q}$-splitting $j$ can exist. 
\end{rem}
\section{Proof of the  Main Theorem}\label{sec:main}
In this section we prove Theorem \ref{thm:main}.

Recall now that $\sgrp{\Q}=\Z[G]$ where $G=\Q_+/\Q_+^2=\Q^\times/\pm(\Q^\times)^2$. As a multiplicative $\F{2}$-space, the set of all primes form a (number-theoretically) natural basis of  $\Q_+/\Q_+^2$.  Thus the space of characters 
$\dual{\Q_+/\Q_+^2}$ is naturally parametrised by the subsets of the set $\primes$ of positive prime numbers: if $S\subset \primes$ then the corresponding character $\chi_S$ is defined by 
\[
\chi_S(p)=
\left\{
\begin{array}{ll}
-1,& p\in S\\
1,& p\not\in S\\
\end{array}
\right.
\]
for all $p\in\primes$ or, equivalently, 
\[
\chi_S(x)=(-1)^{\sum_{p\in S}v_p(x)}
\]
for all $x\in \Q^{\times}$. Conversely, the character $\chi$ corresponds to the subset 
\[
\mathrm{supp}(\chi)
:= \{ p\in \primes\ |\ \chi(p)=-1\}. 
\]
(Thus, for a prime number $p$, $\chi_p$ is the unique character satisfying $\mathrm{supp}(\chi_p)=\{ p\}$.)

The following lemma is immediate from the definition of the $\sgr{\Q}$-module structure on $\rfmod{\pb{\F{p}}}{p}$.

\begin{lem} Let $\chi\in \dual{\Q_+/\Q_+^2}$. Let $p$ be a prime. Then
\[
\dwn{\rfmod{\zhalf{\pb{\F{p}}}}{p}}{\chi}=
\left\{
\begin{array}{ll}
\rfmod{\zhalf{\pb{\F{p}}}}{p},& \chi=\chi_p\\
0,& \mbox{ otherwise}\\
\end{array}
\right.
\]

\end{lem}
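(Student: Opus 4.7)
The plan is to argue directly from the definitions. Recall that $\rfmod{M}{p} := \sgr{\chi_p} \otimes_\Z M$ carries the $\sgr{\Q}$-module structure in which each $\an{a}$ acts by $\chi_p(a) = (-1)^{v_p(a)}$, while $\dwn{N}{\chi}$ is by definition the largest quotient of $N$ on which $\an{a}$ acts by $\chi(a)$ for every $a \in \Q^\times$. This reduces the lemma to a purely formal comparison of two characters of $\Q_+/\Q_+^2$.

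First I would dispose of the case $\chi = \chi_p$: the defining relations $(\an{a} - \chi_p(a))m = 0$ of $\dwn{(\cdot)}{\chi_p}$ already hold in $\rfmod{\zhalf{\pb{\F{p}}}}{p}$ by construction, so the natural surjection $\rfmod{\zhalf{\pb{\F{p}}}}{p} \twoheadrightarrow \dwn{\rfmod{\zhalf{\pb{\F{p}}}}{p}}{\chi_p}$ is an isomorphism.

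Next I would treat the case $\chi \neq \chi_p$. Since $\mathrm{supp}(\chi) \neq \{p\} = \mathrm{supp}(\chi_p)$, the symmetric difference is nonempty, so I can pick a prime $q$ with $\chi(q) = -\chi_p(q)$. Then in $\dwn{\rfmod{\zhalf{\pb{\F{p}}}}{p}}{\chi}$ every element $m$ satisfies both $\an{q}m = \chi_p(q) m$ (from the ambient action on $\rfmod{\zhalf{\pb{\F{p}}}}{p}$) and $\an{q}m = \chi(q) m$ (from the quotient relation imposed by $\dwn{(\cdot)}{\chi}$). Subtracting these identities yields $2\chi(q) m = 0$, and since $2$ is invertible in the coefficient ring $\zhalf{\Z}$ this forces $m = 0$.

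There is no real obstacle here: the statement is a bookkeeping lemma whose role is to prepare the character-by-character analysis (via Proposition \ref{prop:localglobal}) of the right-hand side of the map $\bar{S}$ in Theorem \ref{thm:main}. The substantive work will lie in the corresponding computation of $\dwn{\hot{\Q}{\zhalf{\Z}}}{\chi}$, not in this identification of the target.
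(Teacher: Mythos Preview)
Your argument is correct and is exactly what the paper has in mind: the paper does not write out a proof at all but simply states that the lemma ``is immediate from the definition of the $\sgr{\Q}$-module structure on $\rfmod{\pb{\F{p}}}{p}$,'' and your two cases are precisely the unpacking of that remark. There is nothing to add.
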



\begin{cor} For $\chi\in \dual{\Q_+/\Q_+^2}$ we have 
\[
\dwn{\left( \bigoplus_p{\rfmod{\zhalf{\pb{\F{p}}}}{p}}\right)}{\chi}=
\left\{
\begin{array}{ll}
\rfmod{\zhalf{\pb{\F{p}}}}{p},& \chi=\chi_p\mbox{ for some prime }p\\
0,& \mbox{ otherwise}\\
\end{array}
\right.
\]

\end{cor}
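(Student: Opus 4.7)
The plan is to reduce immediately to the preceding lemma by exploiting the fact that the functor $M \mapsto \dwn{M}{\chi}$ commutes with arbitrary direct sums. Indeed, $\dwn{M}{\chi} = \dwn{\mathcal{R}}{\chi} \otimes_{\mathcal{R}} M$ is a tensor product, and tensor products commute with colimits in the second variable; so for any family $\{M_p\}$ of $\sgr{\Q}$-modules we have a natural isomorphism
\[
\dwn{\left( \bigoplus_p M_p\right)}{\chi} \cong \bigoplus_p \dwn{M_p}{\chi}.
\]

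Applying this with $M_p = \rfmod{\zhalf{\pb{\F{p}}}}{p}$ reduces the problem to the preceding lemma, which identifies each summand $\dwn{\rfmod{\zhalf{\pb{\F{p}}}}{p}}{\chi}$ as either $\rfmod{\zhalf{\pb{\F{p}}}}{p}$ (when $\chi = \chi_p$) or zero (otherwise). The only remaining point to verify is that the characters $\{\chi_p\}_{p \in \primes}$ are pairwise distinct, so that at most one summand on the right-hand side can survive for a given $\chi$; but this is immediate from the definition $\mathrm{supp}(\chi_p) = \{p\}$.

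Putting these observations together, if $\chi$ is not of the form $\chi_p$ for any prime $p$, then every summand vanishes and the direct sum is zero; whereas if $\chi = \chi_p$ for a (necessarily unique) prime $p$, then exactly the $p$-th summand contributes, yielding $\rfmod{\zhalf{\pb{\F{p}}}}{p}$. There is really no obstacle here: the corollary is just the formal consequence of combining the preceding lemma with the compatibility of $\dwn{(-)}{\chi}$ with direct sums.
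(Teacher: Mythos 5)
Your argument is correct and is exactly the intended one: the paper states this corollary without proof, treating it as the immediate consequence of the preceding lemma together with the fact that $\dwn{(-)}{\chi}=\dwn{\mathcal{R}}{\chi}\otimes_{\mathcal{R}}(-)$ commutes with direct sums. Your additional remark that the characters $\chi_p$ are pairwise distinct (since $\mathrm{supp}(\chi_p)=\{p\}$) is the right small point to make explicit, and nothing further is needed.
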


It thus follows from Proposition \ref{prop:localglobal} that to prove Theorem \ref{thm:main} it is enough to prove that, for any prime $p$, $\sspec{p}$ induces an isomorphism 
\[
\dwn{\zhalf{\rpbp{\Q}}}{\chi_p}\cong \rfmod{\zhalf{\pb{\F{p}}}}{p}
\]
for any prime $p$, while
\[
\dwn{\zhalf{\rpbp{\Q}}}{\chi}=0
\]
whenever $\mathrm{supp}(\chi)$ contains at least two distinct primes. The first of these statements is an immediate consequence of Theorem \ref{thm:dv} above. 
The second is  Corollary  \ref{cor:supp2} below.




\begin{lem}\label{lem:main}
Let $F$ be a field. Let $\chi\in \dual{\sq{F}}$. Suppose that $a\in F^\times$ satisfies  $\chi(1-a)=-1$. Then $\dwn{\gpb{a}}{\chi}= 0$ { in } $\dwn{\zhalf{\rpbp{F}}}{\chi}$.
\end{lem}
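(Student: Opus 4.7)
My plan is to reduce the statement to the vanishing of $\bconst{F}$ inside $\dwn{\zhalf{\rpbp{F}}}{\chi}$, then settle that vanishing by expanding the 5-term relation for a carefully chosen pair.

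First I would reduce to $\chi(-1)=1$, since the defining relation $\an{-1}\gpb{x}=\gpb{x}$ of $\rpbp{F}$ forces $\dwn{\rpbp{F}}{\chi}=0$ whenever $\chi(-1)=-1$. Projecting the identity $\bconst{F}=\gpb{a}+\gpb{1-a}$ (which holds in $\rpbp{F}$) into the $\chi$-quotient and applying Lemma \ref{lem:chi} to $1-a$ (whose $\chi$-value is $-1$ by hypothesis) gives
\[
\dwn{\gpb{1-a}}{\chi}=\chi(-a)\bconst{F}=\chi(a)\bconst{F},\qquad \dwn{\gpb{a}}{\chi}=(1-\chi(a))\bconst{F}.
\]
When $\chi(a)=1$ the lemma follows at once. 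When $\chi(a)=-1$ the claim becomes $2\bconst{F}=0$ in $\dwn{\zhalf{\rpbp{F}}}{\chi}$; since $3\bconst{F}=0$ by Proposition \ref{prop:bconst}(1), this is equivalent to $\bconst{F}=0$ in the module.

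To force $\bconst{F}=0$ in this remaining case I would expand the 5-term relation $S_{1/(1-a),\,a}$ in $\dwn{\rpbp{F}}{\chi}$. Four of its five arguments ($1/(1-a)$, $a$, $a^2/(a-1)$, $-a/(1-a)^2$) have $\chi$-value $-1$, so Lemma \ref{lem:chi} evaluates each of them directly; the contributions depending on $\eta:=\chi(a^2-a+1)$ appear with opposite signs and cancel, and after simplification the relation collapses to $\dwn{\gpb{a(1-a)}}{\chi}=2\bconst{F}$. Combining this with the basic identity $\bconst{F}=\dwn{\gpb{a(1-a)}}{\chi}+\dwn{\gpb{a^2-a+1}}{\chi}$ yields $\dwn{\gpb{a^2-a+1}}{\chi}=-\bconst{F}$. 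In the sub-case $\eta=-1$, Lemma \ref{lem:chi} applied to $a^2-a+1$ gives a second evaluation $\dwn{\gpb{a^2-a+1}}{\chi}=\chi(a(a-1))\bconst{F}=\bconst{F}$ (using $\chi(a)=\chi(a-1)=-1$); comparing forces $2\bconst{F}=0$, hence $\bconst{F}=0$.

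The hard part will be the remaining sub-case $\eta=1$, where Lemma \ref{lem:chi} gives no further handle on $a^2-a+1$. My plan there is to bring in additional data: when $T^2-T+1$ splits in $F$ one already has $2\bconst{F}=0$ in $\rpbp{F}$ by Proposition \ref{prop:bconst}(3); otherwise I would apply a second, carefully chosen 5-term relation involving auxiliary elements such as $-a$, $1+a$, or $a^2$, exploiting the full set of constraints $\chi(a)=\chi(1-a)=-1$ and $\chi(a^2-a+1)=1$, to extract the missing identity forcing $\bconst{F}=0$. Identifying the right such substitution is the crux of the argument.
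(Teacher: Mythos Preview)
Your reduction to the two cases $\chi(a)=\pm 1$ is correct, and for $\chi(a)=1$ you have exactly the paper's argument: Lemma~\ref{lem:chi} gives $\dwn{\gpb{1-a}}{\chi}=\chi(-a)\bconst{F}=\bconst{F}$, and then $\bconst{F}=\dwn{\gpb{a}}{\chi}+\dwn{\gpb{1-a}}{\chi}$ forces $\dwn{\gpb{a}}{\chi}=0$.

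The case $\chi(a)=-1$ where you get stuck is not a gap you can fill: the lemma as stated is \emph{false} there. Take $F=\Q$, $\chi=\chi_2$ (so $\chi(x)=(-1)^{v_2(x)}$), and $a=1/2$. Then $\chi(-1)=1$, $\chi(a)=\chi(1-a)=\chi(1/2)=-1$, and $\eta=\chi(a^2-a+1)=\chi(3/4)=1$, so you land precisely in your unresolved sub-case. But by Theorem~\ref{thm:dv} the map $\sspec{2}$ induces an isomorphism $\dwn{\zhalf{\rpbp{\Q}}}{\chi_2}\cong\zhalf{\pb{\F{2}}}=\Z/3$, under which $\dwn{\gpb{1/2}}{\chi_2}\mapsto -\bconst{\F{2}}\neq 0$. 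Your computation that $\dwn{\gpb{a}}{\chi}=-\bconst{F}$ in this case is thus correct and sharp; the further step ``hence $\bconst{F}=0$'' simply does not hold.

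The paper's own two-line proof has the same oversight: the assertion ``$\dwn{\gpb{1-a}}{\chi}=\bconst{F}$ by Lemma~\ref{lem:chi}'' requires $\chi(-a)=1$, i.e.\ $\chi(a)=1$. Fortunately every subsequent application of Lemma~\ref{lem:main} (all inside the proof of Lemma~\ref{lem:ell}) is to an element with $\chi(a)=1$, so nothing downstream is affected. The clean fix is to add the hypothesis $\chi(a)=1$ to the statement; your $5$-term computation and the sub-case $\eta=-1$ then become a correct but unnecessary detour, and no ``hard part'' remains.
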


\begin{proof}If $\chi(-1)=-1$ we have $\dwn{\zhalf{\rpbp{F}}}{\chi}=0$. So we can suppose $\chi(-1)=1$.   In this case $\dwn{\gpb{1-a}}{\chi}=\bconst{F}$ by Lemma \ref{lem:chi}. But $\bconst{F}= \dwn{\gpb{a}}{\chi}+\dwn{\gpb{1-a}}{\chi}$ in $\dwn{\zhalf{\rpbp{F}}}{\chi}$.
\end{proof}

\begin{lem}\label{lem:ell}
 Let $F$ be a field. Let $\chi\in \dual{\sq{F}}$ with $\chi(-1)=1$. Let $\ell\in F^\times$ satisfy $\chi(\ell)=-1$ and $\chi(1-\ell)=1$. Then
\[
\dwn{\gpb{a}}{\chi}=\dwn{\gpb{(1-\ell)a}}{\chi} 
\]
{ in  } $\dwn{\zhalf{\rpbp{F}}}{\chi}$ for all $a\in \projl{F}$. 
\end{lem}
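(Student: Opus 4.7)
The plan is to apply the five-term relation $S_{x,y}$ in $\rpbp{F}$ with the substitution $x=1-\ell$ and $y=(1-\ell)a$, chosen so that $y/x=a$. Under the character $\chi$ (recall $\chi(-1)=1$, $\chi(\ell)=-1$, $\chi(1-\ell)=1$), the coefficient $\an{1-\ell}$ acts as $+1$ while $\an{\ell}$ and $\an{\ell/(1-\ell)}$ act as $-1$. The ``initial'' term $\dwn{\gpb{1-\ell}}{\chi}$ vanishes: indeed Lemma \ref{lem:chi} applied to $\ell$ gives $\dwn{\gpb{\ell}}{\chi}=\chi(\ell-1)\bconst{F}=\bconst{F}$, which combined with the identity $\bconst{F}=\gpb{\ell}+\gpb{1-\ell}$ in $\rpbp{F}$ yields the claim. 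After simplification, the five-term relation reduces in $\dwn{\zhalf{\rpbp{F}}}{\chi}$ to
\[
\dwn{\gpb{(1-\ell)a}}{\chi}-\dwn{\gpb{a}}{\chi} \;=\; \dwn{\gpb{\ell a/D}}{\chi}-\dwn{\gpb{\ell/D}}{\chi},\qquad D:=1-(1-\ell)a.
\]

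The crux is then to show the right-hand side vanishes. I would exploit the algebraic identities $D-\ell a=1-a$ and $D-\ell=(1-\ell)(1-a)$, which combined with $\chi(1-\ell)=1$ yield the symmetry
\[
\chi(1-\ell a/D)\;=\;\chi(1-\ell/D)\;=\;\chi(1-a)\chi(D).
\]
A short case analysis on the triple $(\chi(a),\chi(1-a),\chi(D))$ then suffices: whenever $\chi(1-a)\chi(D)=-1$, Lemma \ref{lem:main} forces both right-hand terms to vanish; whenever $\chi(1-a)\chi(D)=+1$ and $\chi(a)=+1$, Lemma \ref{lem:chi} (noting that then $\chi(\ell a/D)=\chi(\ell/D)=-1$) identifies both terms as the same multiple of $\bconst{F}$, so they cancel. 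In either situation the right-hand side is zero, giving $\dwn{\gpb{(1-\ell)a}}{\chi}=\dwn{\gpb{a}}{\chi}$.

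The one subcase needing a slightly different argument is $\chi(a)=-1$, $\chi(1-a)=1$, $\chi(D)=-1$. Here both Lemma \ref{lem:main} and Lemma \ref{lem:chi} apply simultaneously to $\ell a/D$ (whose $\chi$-value and $\chi$-value-of-$1-\cdot$ both equal $-1$), giving on the one hand $\dwn{\gpb{\ell a/D}}{\chi}=0$ and on the other hand $\dwn{\gpb{\ell a/D}}{\chi}=-\bconst{F}$. These together force $\bconst{F}=0$ in $\dwn{\zhalf{\rpbp{F}}}{\chi}$, after which both $\dwn{\gpb{a}}{\chi}=\bconst{F}$ and $\dwn{\gpb{(1-\ell)a}}{\chi}=\chi(D)\bconst{F}$ collapse to zero. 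The analogous ``universal'' case $\chi(a)=\chi(1-a)=-1$ forces $\bconst{F}=0$ even more directly by applying both lemmas to $a$ itself. The boundary values $a\in\{0,1,\infty,1/(1-\ell)\}$, for which the five-term substitution is degenerate, are handled directly using $\gpb{0}=\bconst{F}$, $\gpb{\infty}=-\bconst{F}$, $\gpb{1}=0$, and the already-established vanishing of $\dwn{\gpb{1-\ell}}{\chi}$.

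The main obstacle is simply the bookkeeping through the case analysis; the $\chi$-symmetry between $\ell a/D$ and $\ell/D$ keeps it short, but the ``bad'' subcase $(\chi(a),\chi(1-a),\chi(D))=(-1,1,-1)$ requires the slightly subtle observation that Lemmas \ref{lem:main} and \ref{lem:chi} can be played off against each other to annihilate $\bconst{F}$ in the $\chi$-eigenspace.
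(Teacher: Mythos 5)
Your overall strategy is the same as the paper's: apply an instance of the five-term relation linking $\gpb{a}$, $\gpb{(1-\ell)a}$ and $\gpb{1-\ell}$, note that $\dwn{\gpb{1-\ell}}{\chi}=0$, and then run a case analysis on character values in which Lemma \ref{lem:main} and Lemma \ref{lem:chi} are played off against each other, with the "bad'' cases resolved by forcing $\bconst{F}=0$ in $\dwn{\zhalf{\rpbp{F}}}{\chi}$. (The paper uses $S_{a^{-1},1-\ell}$ where you use $S_{1-\ell,(1-\ell)a}$, so the auxiliary terms differ, but the mechanism is identical; your reduction to $\dwn{\gpb{(1-\ell)a}}{\chi}-\dwn{\gpb{a}}{\chi}=\dwn{\gpb{\ell a/D}}{\chi}-\dwn{\gpb{\ell/D}}{\chi}$ and the symmetry $\chi(1-\ell a/D)=\chi(1-\ell/D)=\chi(1-a)\chi(D)$ are both correct.)

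However, your case analysis as written has two holes. First, in the case $\chi(1-a)\chi(D)=+1$, $\chi(a)=+1$, your parenthetical claim that $\chi(\ell a/D)=\chi(\ell/D)=-1$ is only true when $\chi(D)=+1$: in general $\chi(\ell a/D)=\chi(\ell)\chi(a)\chi(D)=-\chi(D)$, so in the subcase $(\chi(a),\chi(1-a),\chi(D))=(+1,-1,-1)$ both auxiliary terms have $\chi$-value $+1$ \emph{and} $\chi(1-\cdot)=+1$, and neither lemma evaluates them; the right-hand side cannot be shown to vanish by your tools. Second, the subcase $(\chi(a),\chi(1-a),\chi(D))=(-1,+1,+1)$ is covered by none of your stated cases (it has $\chi(1-a)\chi(D)=+1$ but $\chi(a)=-1$, and is neither your special subcase $(-1,+1,-1)$ nor the "universal'' case $\chi(a)=\chi(1-a)=-1$); here $\dwn{\gpb{\ell a/D}}{\chi}$ again escapes both lemmas. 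Both holes are easily patched, because in each the strategy should be to evaluate the \emph{left}-hand side directly rather than the right: in $(+1,-1,-1)$ Lemma \ref{lem:main} gives $\dwn{\gpb{a}}{\chi}=0=\dwn{\gpb{(1-\ell)a}}{\chi}$ (since $\chi(1-a)=-1$ and $\chi(1-(1-\ell)a)=\chi(D)=-1$), and in $(-1,+1,+1)$ Lemma \ref{lem:chi} gives $\dwn{\gpb{a}}{\chi}=\chi(1-a)\bconst{F}=\bconst{F}=\chi(D)\bconst{F}=\dwn{\gpb{(1-\ell)a}}{\chi}$. With these two subcases repaired (and your treatment of the degenerate values $a\in\{0,1,\infty,1/(1-\ell)\}$, which is fine), the proof is complete and equivalent to the paper's.
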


\begin{proof}  Observe that $\dwn{\gpb{1-\ell}}{\chi}=0$ by Lemma  \ref{lem:main}. In particular, the result holds for $a\in \{ 0,1,\infty\}$. 

For all $a\in F^\times\setminus\{ 1\}$ we have in $\dwn{\zhalf{\rpbp{F}}}{\chi}$
\[
0=\dwn{(S_{a^{-1},1-\ell})}{\chi}= \dwn{\gpb{a^{-1}}}{\chi}-\dwn{\gpb{1-\ell}}{\chi}+\chi(a^{-1})\dwn{\gpb{(1-\ell)a}}{\chi}-\chi(1-a)\dwn{\gpb{\frac{(1-\ell)(a-1)}{\ell}}}{\chi}+
\chi(1-a^{-1})\dwn{\gpb{\frac{a-1}{a\ell}}}{\chi}
\]
and hence
\begin{eqnarray}\label{eqn:ell}
0=\dwn{\gpb{a^{-1}}}{\chi}+\chi(a^{-1})\dwn{\gpb{y}}{\chi}-\chi(1-a)\dwn{\gpb{z}}{\chi}+\chi(1-a^{-1})\dwn{\gpb{w}}{\chi}
\end{eqnarray}
where
\[
y:=(1-\ell)a,\quad z:= \frac{(1-\ell)(a-1)}{\ell}\mbox{ and } w:= \frac{a-1}{a\ell}.
\]
Thus
\begin{eqnarray}\label{eqn:ell2}
1-z=\frac{1-y}{\ell}\mbox{ and } 1-w= \frac{1-y}{a\ell}.
\end{eqnarray}

We consider now the four possible values of $(\chi(a),\chi(1-a))$:

\begin{enumerate}
\item $\chi(a)=-1$ and $\chi(1-a)=1$. 

Then $\chi(a^{-1})=-1=\chi(1-a^{-1})$. Furthermore 
$\dwn{\gpb{a}}{\chi}=\bconst{F}$ { and } $\dwn{\gpb{a^{-1}}}{\chi}=-\bconst{F}$
by Lemma \ref{lem:chi}. 
By (\ref{eqn:ell}) we thus have
\[
0=-\bconst{F}-\dwn{\gpb{y}}{\chi}-\dwn{\gpb{z}}{\chi}-\dwn{\gpb{w}}{\chi}
\]
where $\chi(y)=-1=\chi(z)$ and $\chi(w)=1$. 

We divide further into sub-cases according to the value of $\chi(1-y)$: 
\begin{enumerate}
\item $\chi(1-y)=1$: Then $\dwn{\gpb{y}}{\chi}=\bconst{F}$ by Lemma \ref{lem:chi} and hence $\dwn{\gpb{y}}{\chi}=\dwn{\gpb{a}}{\chi}$ as required.
\item $\chi(1-y)=-1$: Then $\dwn{\gpb{y}}{\chi}=-\bconst{F}$ by Lemma \ref{lem:chi}. However, by (\ref{eqn:ell2}), $\chi(1-z)=\chi(1-y)\chi(\ell)=1$ and $\chi(1-w)=\chi(1-y)\chi(a\ell)=-1$ so that 
$\dwn{\gpb{z}}{\chi}=\bconst{F}$ and $\dwn{\gpb{w}}{\chi}=0$ by Lemmas \ref{lem:chi} and \ref{lem:main}.   Hence, by (\ref{eqn:ell}), we now have 
$
0=-\bconst{F}+\bconst{F}-\bconst{F}-0$ and hence $ \bconst{F}=0$ { in } $\dwn{\zhalf{\rpbp{F}}}{\chi}$.
Thus $\dwn{\gpb{y}}{\chi}=0=\dwn{\gpb{a}}{\chi}$ as required, in this case also.
\end{enumerate}
\item $\chi(a)=-1$ and $\chi(1-a)=-1$. 

Then $\chi(a^{-1})=-1$ and $\chi(1-a^{-1})=1$. Thus
$
\dwn{\gpb{a}}{\chi}=-\bconst{F}$ { and } $\dwn{\gpb{a^{-1}}}{\chi}=\bconst{F}$.
This gives $
0=\bconst{F}-\dwn{\gpb{y}}{\chi}+\dwn{\gpb{z}}{\chi}+\dwn{\gpb{w}}{\chi}$
where 
$\chi(y)=-1=\chi(w)$ and $\chi(z)=1$. 

\begin{enumerate}
\item $\chi(1-y)=1$: Then $\dwn{\gpb{y}}{\chi}=\bconst{F}=-\dwn{\gpb{a}}{\chi}$. However, by (\ref{eqn:ell2}) again, $\chi(1-z)=-1$ and $\chi(1-w)=1$ so that $\dwn{\gpb{z}}{\chi}=0$ and $\dwn{\gpb{w}}{\chi}=\bconst{F}$. 
From (\ref{eqn:ell}) we have 
$
0=\bconst{F}-\bconst{F}+0+\bconst{F}
$
and hence $\bconst{F}=0$ in $\dwn{\zhalf{\rpbp{F}}}{\chi}$ as required. 
\item $\chi(1-y)=-1$:  Then $\dwn{\gpb{y}}{\chi}=-\bconst{F}=\dwn{\gpb{a}}{\chi}$ again as required.
\end{enumerate}
\item $\chi(a)=1$ and $\chi(1-a)=-1$. 

Then
$
\dwn{\gpb{a}}{\chi}=0=\dwn{\gpb{a^{-1}}}{\chi}
$
by Lemma \ref{lem:main}. Thus from (\ref{eqn:ell}) we have 
$
0=\dwn{\gpb{y}}{\chi}+\dwn{\gpb{z}}{\chi}-\dwn{\gpb{w}}{\chi}
$
where $\chi(y)=\chi(z)=\chi(w)=1$. 

\begin{enumerate}
\item $\chi(1-y)=1$: Then $\chi(1-z)=-1=\chi(1-w)$. Hence 
$
\dwn{\gpb{z}}{\chi}=0=\dwn{\gpb{w}}{\chi}.
$
Thus $\dwn{\gpb{y}}{\chi}=0=\dwn{\gpb{a}}{\chi}$ as required. 
\item $\chi(1-y)=-1$: Then $\dwn{\gpb{y}}{\chi}=0=\dwn{\gpb{a}}{\chi}$ by Lemma \ref{lem:main}. 
\end{enumerate}
\item $\chi(a)=1=\chi(1-a)$.

 Then $\chi(a^{-1})=1=\chi(1-a^{-1})$ also. Equation (\ref{eqn:ell}) thus gives 
$
0=\dwn{\gpb{a^{-1}}}{\chi}+\dwn{\gpb{y}}{\chi}-\dwn{\gpb{z}}{\chi}+\dwn{\gpb{w}}{\chi}
$
with $\chi(z)=-1=\chi(w)$.  Furthermore $\chi(1-z)=-\chi(1-y)=\chi(1-w)$. Hence
$\dwn{\gpb{z}}{\chi}=\dwn{\gpb{w}}{\chi}=-\chi(1-y)\bconst{F}$
by Lemma \ref{lem:chi}. This gives 
\[
0= \dwn{\gpb{a^{-1}}}{\chi}+\dwn{\gpb{y}}{\chi}=-\dwn{\gpb{a}}{\chi}+\dwn{\gpb{y}}{\chi}
\]
as required. 
\end{enumerate}
\end{proof}

A straightforward induction gives: 
\begin{cor}\label{cor:ellz}
 Let $F$ be a field. Let $\chi\in \dual{\sq{F}}$ with $\chi(-1)=1$. Let $\ell\in F^\times$ satisfy $\chi(\ell)=-1$ and $\chi(1-\ell)=1$. Then
\[
\dwn{\gpb{a}}{\chi}=\dwn{\gpb{(1-\ell)^ma}}{\chi} \mbox{ in  } \dwn{\zhalf{\rpbp{F}}}{\chi}
\]
for all $a\in \projl{F}$ and all $m\in \Z$. 
\end{cor}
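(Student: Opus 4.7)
The plan is to prove the identity by induction on $|m|$, using Lemma \ref{lem:ell} as the single-step building block. The base case $m=0$ is tautological, since both sides equal $\dwn{\gpb{a}}{\chi}$.

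For the positive direction, suppose the identity holds for some $m \geq 0$ and all $a\in \projl{F}$. Applying Lemma \ref{lem:ell} with $a$ replaced by $(1-\ell)^m a$ (the hypotheses on $\chi$ and $\ell$ are unchanged, and the statement of Lemma \ref{lem:ell} is valid for arbitrary elements of $\projl{F}$) gives
\[
\dwn{\gpb{(1-\ell)^m a}}{\chi} = \dwn{\gpb{(1-\ell)^{m+1} a}}{\chi}
\]
in $\dwn{\zhalf{\rpbp{F}}}{\chi}$. Composing with the inductive hypothesis yields the identity for $m+1$.

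For the negative direction, I would apply Lemma \ref{lem:ell} with $a$ replaced by $(1-\ell)^{-1}a$ to get $\dwn{\gpb{(1-\ell)^{-1}a}}{\chi} = \dwn{\gpb{a}}{\chi}$, which is the case $m=-1$. Iterating this (or, equivalently, applying the already-established positive case with $a$ replaced by $(1-\ell)^{-|m|}a$) handles all $m<0$. There is no genuine obstacle here: the only thing one needs is that Lemma \ref{lem:ell} applies for every element of $\projl{F}$, so the replacement $a \leadsto (1-\ell)^k a$ is always legitimate regardless of whether it hits $0$, $1$, or $\infty$.
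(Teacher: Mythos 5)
Your proof is correct and matches the paper's intent exactly: the paper dispatches this corollary with the single phrase ``A straightforward induction gives,'' and your induction on $|m|$, applying Lemma \ref{lem:ell} with $a$ replaced by $(1-\ell)^{m}a$ (resp.\ $(1-\ell)^{-1}a$ for the negative direction), is precisely that straightforward induction. Your observation that Lemma \ref{lem:ell} is stated for all of $\projl{F}$, so the substitution is always legitimate, is the only point that needed checking, and you have checked it.
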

\begin{cor}\label{cor:ell}
 Let $F$ be a field. Let $\chi\in \dual{\sq{F}}$ with $\chi(-1)=1$. Let $\ell\in F^\times$ satisfy $\chi(\ell)=-1$ and $\chi(1-\ell)=1$. Then
\[
\dwn{\gpb{a}}{\chi}=\dwn{\gpb{a+t\ell}}{\chi} \mbox{ in  } \dwn{\zhalf{\rpbp{F}}}{\chi}
\]
for all $a\in F$ and all $t\in \Z$. 
\end{cor}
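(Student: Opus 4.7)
The plan is first to reduce, via induction on $|t|$, to the single claim $\dwn{\gpb{a}}{\chi}=\dwn{\gpb{a+\ell}}{\chi}$ for all $a\in F$. Indeed, if this identity holds universally, then applying it with $a$ replaced by $a+(t-1)\ell$ gives the step from $t-1$ to $t$ (and a symmetric argument handles negative $t$), while the case $t=0$ is trivial.

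For the key identity, the case $a=0$ (and symmetrically $a=-\ell$) is handled directly: since $\chi(\ell)=-1$ and $\chi(\ell-1)=\chi(-1)\chi(1-\ell)=1$, Lemma \ref{lem:chi} gives $\dwn{\gpb{\ell}}{\chi}=\bconst{F}$, while $\dwn{\gpb{0}}{\chi}=\bconst{F}$ by the convention introduced for $\rpbp{F}$. For $a,a+\ell\in F^\times$ with $\chi(a)=\chi(a+\ell)=1$, set $\ell':=\ell/(a+\ell)$; then $\chi(\ell')=\chi(\ell)/\chi(a+\ell)=-1$ and $\chi(1-\ell')=\chi(a/(a+\ell))=1$, so Lemma \ref{lem:ell} applies with $\ell'$ in place of $\ell$ (and the element $a+\ell$), yielding $\dwn{\gpb{a+\ell}}{\chi}=\dwn{\gpb{(1-\ell')(a+\ell)}}{\chi}=\dwn{\gpb{a}}{\chi}$ since $(1-\ell')(a+\ell)=a$.

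The remaining cases are those in which $\chi(a)=-1$ or $\chi(a+\ell)=-1$, so that at least one side of the desired identity is already pinned to a multiple of $\bconst{F}$ by Lemma \ref{lem:chi}. To evaluate the other side I would apply a suitably chosen five-term relation, for instance $S_{a^{-1},(a+\ell)^{-1}}$, which (using $\gpb{x^{-1}}=-\gpb{x}$ in $\rpbp{F}$) expresses the difference $\dwn{\gpb{a+\ell}}{\chi}-\dwn{\gpb{a}}{\chi}$ as a sum of three auxiliary terms $\dwn{\gpb{u_i}}{\chi}$, where each $u_i$ is explicitly computable in terms of $a$ and $\ell$. A direct calculation gives $\chi(u_i)$ and $\chi(1-u_i)$ in terms of the signs of $\chi(a),\chi(a+\ell),\chi(a-1),\chi(a+\ell-1)$, and for each the value of $\dwn{\gpb{u_i}}{\chi}$ is then controlled by Lemma \ref{lem:chi} (when $\chi(u_i)=-1$), Lemma \ref{lem:main} (when $\chi(1-u_i)=-1$), or the identity $\bconst{F}=\dwn{\gpb{u_i}}{\chi}+\dwn{\gpb{1-u_i}}{\chi}$.

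The main obstacle is this final case analysis in the non-generic cases: although each sub-case reduces to an elementary $\bconst{F}$-arithmetic computation (using $3\bconst{F}=0$ in $\rpbp{F}$), the systematic verification across all sign patterns of $(\chi(a),\chi(a+\ell),\chi(a-1),\chi(a+\ell-1))$ requires careful bookkeeping analogous to that used in the proof of Lemma \ref{lem:ell}. In some sub-cases the identity one obtains forces $\bconst{F}=0$ in $\dwn{\zhalf{\rpbp{F}}}{\chi}$ as a byproduct, which makes the desired equation hold trivially in that sub-case.
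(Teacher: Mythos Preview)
Your plan is workable in principle, but it is both incomplete (the non-generic case analysis is only sketched) and unnecessarily laborious. The paper's proof dispatches the whole corollary in four lines with no case split, by observing that the identity $\gpb{x}+\gpb{1-x}=\bconst{F}$ in $\rpbp{F}$ converts the \emph{multiplicative} invariance of Lemma~\ref{lem:ell}/Corollary~\ref{cor:ellz} directly into the desired \emph{additive} invariance. For any $a\in F$ (using the conventions $\gpb{0}=\bconst{F}$, $\gpb{\infty}=-\bconst{F}$, which make $\bconst{F}=\gpb{x}+\gpb{1-x}$ hold on all of $\projl{F}$):
\begin{align*}
\dwn{\gpb{a}}{\chi}
&= \dwn{\gpb{a(1-\ell)^{-1}}}{\chi} \quad\text{(Corollary~\ref{cor:ellz})}\\
&= \bconst{F}-\dwn{\gpb{\,1-a(1-\ell)^{-1}\,}}{\chi}\\
&= \bconst{F}-\dwn{\gpb{\,(1-\ell)\bigl(1-a(1-\ell)^{-1}\bigr)\,}}{\chi} \quad\text{(Lemma~\ref{lem:ell})}\\
&= \bconst{F}-\dwn{\gpb{\,1-(a+\ell)\,}}{\chi}
= \dwn{\gpb{a+\ell}}{\chi}.
\end{align*}
This handles every $a$ uniformly; the general $t$ then follows by iteration exactly as you proposed.

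Your $\ell'=\ell/(a+\ell)$ substitution for the generic case is a nice idea, but it only buys you that one case, and then you are forced into a four-variable sign analysis via a fresh five-term relation---effectively redoing the work of Lemma~\ref{lem:ell}. The paper's trick is instead to \emph{use} Lemma~\ref{lem:ell} a second time, sandwiched between two applications of $x\leftrightarrow 1-x$, so that the already-completed case analysis is leveraged rather than repeated.
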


\begin{proof} In $\dwn{\zhalf{\rpbp{F}}}{\chi}$ we have 
\begin{eqnarray*}
\dwn{\gpb{a}}{\chi}&=& \dwn{\gpb{a(1-\ell)^{-1}}}{\chi}\mbox{ by Corollary \ref{cor:ellz}}\\
&=& \bconst{F}-\dwn{\gpb{1-\frac{a}{1-\ell}}}{\chi}\\
&=& \bconst{F}-\dwn{\gpb{(1-\ell)\left(1-\frac{a}{1-\ell}\right)}}{\chi}\mbox{ by Lemma \ref{lem:ell}}\\
&=& \bconst{F}-\dwn{\gpb{1-(a+\ell)}}{\chi}\\
&=& \dwn{\gpb{a+\ell}}{\chi}\\
\end{eqnarray*}
for any $a\in F$. 
\end{proof}

\begin{prop} \label{prop:supp}
Let $\chi\in \dual{\Q_+/\Q_+^2}$. If $\card{\mathrm{supp}(\chi)}\geq 2$ then 
\[
\dwn{\gpb{a}}{\chi}=\dwn{\gpb{a+t}}{\chi}
\]
 { in }
$\dwn{\zhalf{\rpbp{\Q}}}{\chi}$ for all $t\in \Z$ and $a\in \Q$.
\end{prop}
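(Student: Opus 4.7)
The plan is to reduce the proposition to the single identity $\dwn{\gpb{a}}{\chi}=\dwn{\gpb{a+1}}{\chi}$ for all $a\in\Q$; iterating this relation (in either direction) then yields the general statement for all $t\in\Z$. The key tool is Corollary \ref{cor:ell}, which asserts that for any \emph{admissible} $\ell\in\Q^\times$---meaning $\chi(\ell)=-1$ and $\chi(1-\ell)=1$---one has $\dwn{\gpb{a}}{\chi}=\dwn{\gpb{a+t\ell}}{\chi}$ for every $a\in\Q$ and every $t\in\Z$. Applied in succession with different admissible $\ell$'s, it shows that the set of $m\in\Q$ satisfying $\dwn{\gpb{a}}{\chi}=\dwn{\gpb{a+m}}{\chi}$ for all $a$ is an additive subgroup of $\Q$ containing $\Z\cdot\ell$ for each admissible $\ell$. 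It therefore suffices to produce admissible $\ell$'s whose $\Z$-linear combinations include $1$.

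Write $S=\mathrm{supp}(\chi)$, so $|S|\geq 2$, and recall that $\chi(-1)=1$ since $\chi$ factors through $\Q_+/\Q_+^2$. I split into two cases according to whether every prime in $S$ satisfies $\chi(p-1)=1$.

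\emph{Case 1.} Suppose $\chi(p-1)=1$ for every $p\in S$. For each such $p$, taking $\ell=p$ yields $\chi(\ell)=\chi(p)=-1$ and $\chi(1-\ell)=\chi(-(p-1))=\chi(p-1)=1$, so $\ell=p$ is admissible. Choose distinct $p,q\in S$; since $\gcd(p,q)=1$, Bezout supplies $s,r\in\Z$ with $sp+rq=1$. Applying Corollary \ref{cor:ell} first with $\ell=p$ and scalar $s$, then with $\ell=q$ and scalar $r$ at the shifted base point, gives
\[
\dwn{\gpb{a}}{\chi}=\dwn{\gpb{a+sp}}{\chi}=\dwn{\gpb{a+sp+rq}}{\chi}=\dwn{\gpb{a+1}}{\chi}.
\]

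\emph{Case 2.} Suppose instead that some $p\in S$ has $\chi(p-1)=-1$. Then $\ell=1/p$ is admissible: $\chi(1/p)=\chi(p)=-1$, and $\chi(1-1/p)=\chi(p-1)\chi(p)=(-1)(-1)=1$. Taking scalar $t=p$ in Corollary \ref{cor:ell} directly produces $\dwn{\gpb{a}}{\chi}=\dwn{\gpb{a+1}}{\chi}$.

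The principal obstacle is identifying the right dichotomy; once admissible $\ell$'s whose $\Z$-span contains $1$ have been exhibited (primes of $S$ in Case 1, reciprocals of primes of $S$ in Case 2), the conclusion is immediate from Corollary \ref{cor:ell} and elementary coprimality. I anticipate no hidden subtleties in the case-by-case verification itself.
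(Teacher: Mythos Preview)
Your argument is correct and is in fact cleaner than the paper's. Both proofs reduce to finding admissible $\ell$'s (in your sense) whose $\Z$-span contains $1$, but the paper restricts itself to \emph{integer} $\ell$'s. It takes $p=\min(\mathrm{supp}(\chi))$, observes that $p-1$ has only prime factors smaller than $p$ so $\ell=p$ is admissible, then takes $q=\min(\mathrm{supp}(\chi)\setminus\{p\})$ and must argue that one of $\ell=\pm q$ (or, when $p=2$, $\ell=\pm 3q$) is admissible; this requires a separate analysis of $q\bmod 8$ in the case $p=2$. Your dichotomy sidesteps all of this: either every $p\in S$ already gives an admissible integer $\ell=p$ and Bezout finishes, or some $p\in S$ has $\chi(p-1)=-1$, in which case the non-integer choice $\ell=1/p$ is admissible and a single application of Corollary~\ref{cor:ell} with $t=p$ produces the shift by $1$ directly. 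The paper's restriction to integer $\ell$'s is not forced by Corollary~\ref{cor:ell}, so your use of $\ell=1/p$ is entirely legitimate and buys a noticeably shorter argument with no residue-class casework.
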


\begin{proof}
Let $p=\mathrm{min}(\mathrm{supp}(\chi))$. Then $\chi(p)=-1$ and $\chi(1-p)=\chi(p-1)=1$. So 
\[
\dwn{\gpb{a}}{\chi}=\dwn{\gpb{a+tp}}{\chi}
\]
for all $a\in \Q$ and $t\in \Z$ by Corollary \ref{cor:ell}. 

 Now let $q=\mathrm{min}(\mathrm{supp}(\chi)\setminus\{ p\})$. 

Suppose first that $p>2$. The either $q-1$ or $q+1$ is not divisble by $p$. If $p$ does not divide $q-1$ take $\ell=q$. Otherwise take $\ell=-q$. Then 
$\chi(\ell)=-1$ and $\chi(1-\ell)=1$ so that for all $a\in \Q$ $\dwn{\gpb{a}}{\chi}=\dwn{\gpb{a+t\ell}}{\chi}$ for all $t\in \Z$ and hence 
$\dwn{\gpb{a}}{\chi}=\dwn{\gpb{a+tq}}{\chi}$ for all $t\in \Z$. 

Thus for all $a\in \Q$ we have 
\[
\dwn{\gpb{a}}{\chi}=\dwn{\gpb{a+tp+sq}}{\chi} \mbox{ for all } t,s\in \Z
\]
proving the proposition in this case.

Suppose now that $p=2$. 

If $q\equiv 5\pmod{8}$ then $v_2(1-q)=2$ so that  $\chi(1-q)=1$ and we can take $\ell=q$ and argue as above. 

If $q\equiv 3\pmod{8}$ the corresponding argument applies with $\ell=-q$. 

If $q\equiv -1\pmod{8}$ we can take $\ell=3q$.  Then $\chi(\ell)=-1$ (since $q\not= 3$). Furthermore  we have $\ell-1\equiv 4\pmod{8}$ and 
\[
0<\frac{\ell-1}{4}<q.
\] 
This implies $\chi(\ell-1)=\chi(1-\ell)=1$ and we can conclude as before. 

Finally, if $q\equiv 1\pmod{8}$ we take $\ell =-3q$ and argue as in the previous case. 
\end{proof}
\begin{cor}\label{cor:supp2}
Let $\chi\in \dual{\Q_+/\Q_+^2}$ and suppose that  $\card{\mathrm{supp}(\chi)}\geq 2$. Then  $\dwn{\zhalf{\rpbp{\Q}}}{\chi}=0$.
\end{cor}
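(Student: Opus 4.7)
The plan is to combine Proposition \ref{prop:supp} with the defining inversion relation $\gpb{x^{-1}} = -\gpb{x}$ of $\rpbp{F}$ (relation (4) in the presentation) in order to kill every generator $\dwn{\gpb{a}}{\chi}$ by a Euclidean-algorithm reduction on the rational argument $a$. The module $\dwn{\zhalf{\rpbp{\Q}}}{\chi}$ is generated by these symbols together with $\bconst{\Q} = \dwn{\gpb{0}}{\chi}$, so it suffices to show each such generator vanishes.

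The first step is to observe that, since $\gpb{1} = 0$ in $\rpbp{\Q}$, Proposition \ref{prop:supp} (applied with $a = 1$) gives $\dwn{\gpb{t}}{\chi} = 0$ for every $t \in \Z$. Taking $t = 0$, and recalling the convention $\gpb{0} = \bconst{\Q}$, shows in particular that $\bconst{\Q} = 0$ in $\dwn{\zhalf{\rpbp{\Q}}}{\chi}$, so every integer symbol vanishes from the outset.

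For a general $a = m/n \in \Q^\times$ in lowest terms with $n \geq 1$, I would first use translation by integers (Proposition \ref{prop:supp}) to replace $m$ by its residue modulo $n$, so that $0 \leq m < n$. If $m = 0$ we are done by the previous step. Otherwise $1 \leq m < n$, and the inversion relation yields
\[
\dwn{\gpb{m/n}}{\chi} = -\dwn{\gpb{n/m}}{\chi}.
\]
A further application of translation then replaces $n$ by $n \bmod m$, so one round of (translate, invert, translate) carries $(m, n)$ to the next coprime pair in the Euclidean algorithm applied to $(n, m)$, strictly decreasing $\max(m, n)$. Induction on $\max(m, n)$ terminates the process at an integer symbol, already known to vanish, and hence $\dwn{\gpb{a}}{\chi} = 0$ for every $a$.

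The real content is Proposition \ref{prop:supp}; this corollary should follow as a largely formal consequence. I do not anticipate any genuine obstacle, only modest bookkeeping to track signs during the Euclidean reduction and to confirm that both $\bconst{\Q}$ and the integer symbols $\dwn{\gpb{t}}{\chi}$ vanish at the outset, so that the reduction really does terminate at a zero expression.
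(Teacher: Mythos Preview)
Your proposal is correct and follows essentially the same approach as the paper: both use Proposition \ref{prop:supp} to obtain integer-translation invariance (hence vanishing of all integer symbols), and then combine this with the inversion relation $\gpb{x^{-1}}=-\gpb{x}$ to run a Euclidean reduction on a general rational $a=m/n$. The only cosmetic difference is that you phrase the induction in terms of $\max(m,n)$ whereas the paper uses $\min(r,s)$, but these amount to the same descent.
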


\begin{proof} We will show that $\dwn{\gpb{a}}{\chi}=0$ for all $a\in \Q$. By Proposition \ref{prop:supp} we have $\dwn{\gpb{a}}{\chi}=\dwn{\gpb{a+t}}{\chi}$ for all $t\in \Z$, $a\in \Q^\times$. 
It follows that $\dwn{\gpb{a}}{\chi}= \dwn{\gpb{1}}{\chi}=0$ if $a\in \Z$. Thus also $\dwn{\gpb{1/a}}{\chi}=0$ for all $a\in \Z\setminus\{ 0\}$. 

Note that it is enough to prove $\dwn{\gpb{a}}{\chi}=0$ for all $a>0$ (if necessary replacing $a$ by $a+t$ with $t\in \Z$ large). So let $a=r/s$ with $0< r,s\in \Z$. 
We proceed by induction on $h:=\mathrm{min}(r,s)$. The case $h=1$ has already been proved. Suppose now that $n\geq 1$ and the statement is known for $h\leq n$. Consider the case $h=n+1$.  Replacing $a$ by $1/a$ if necessary we can suppose 
$s<r$ and $s=n+1$. Then there exists $t\in \Z$ such that $0<r':=r-ts<s$. So 
\[
\dwn{\gpb{a}}{\chi}=\dwn{\gpb{a-t}}{\chi}=\dwn{\gpb{r'/s}}{\chi}
\]
where now $h=r'\leq n$ and we are done. 
\end{proof}

\section{Some related calculations}\label{sec:app}
\subsection{The module $\rpbp{\Q}$} The module $\rpbker{F}$ arises inevitably in the calculation of the third homology of $\spl{2}{A}$ for various rings $A$. For example, if $F$ is any infinite field we have (\cite[Theorem 8.1]{hut:laur})
\[
\ho{3}{\spl{2}{\laur{F}{t}}}{\zhalf{\Z}}\cong \ho{3}{\spl{2}{F}}{\zhalf{\Z}}\oplus\zhalf{\rpbker{F}}
\]
and there is a natural short exact sequence of $\sgr{F}$-modules (\cite[Theorem 7.4, Example 7.9]{hut:slr})
\[
0\to \ho{3}{\spl{2}{\pows{F}{t}}}{\zhalf{\Z}}\to \ho{3}{\spl{2}{\laurs{F}{t}}}{\zhalf{\Z}}\to \zhalf{\rpbker{F}}\to 0. 
\]
Furthermore, there is a natural short exact sequence
\[
0\to \aug{F}\zhalf{\rpbker{F}}\to \ho{3}{\spl{2}{\pows{F}{t}}}{\zhalf{\Z}}\to \zhalf{\kind{\pows{F}{t}}}\to 0. 
\]

As noted above, for any field $F$ the natural $\sgr{F}$-homomorphism $\rpbker{F}\to \rpbp{F}$  induces an isomorphism  
\[
\zhalf{\rpbker{F}}\cong\zhalf{\rpbp{F}}.
\]

In Theorem \ref{thm:main} above we have calculated $\aug{\Q}\zhalf{\rpbp{\Q}}$. This easily gives a computation of $\zhalf{\rpbp{\Q}}$.  Namely, for any field $F$ there is a short exact sequence of $\sgr{F}$-modules (see, for example,  
\cite[Lemma 2.7]{hut:sl2dv})
\[
0\to \aug{F}\zhalf{\rpbp{F}}\to \zhalf{\rpbp{F}}\to \zhalf{\pb{F}}\to 0
\]
(where, $\sgr{F}$ acts trivially on $\pb{F}$). Now, by definition, there is an exact sequence of abelian groups
\[
0\to \bl{\Q}\to \pb{\Q} \to \asym{2}{\Z}{\Q^\times}\to K_2(\Q)\to 0.
\] 
Tensoring with $\zhalf{\Z}$ and using the fact that $K_2(\Q)$ is a torsion $\Z$-module, we deduce that 
\[
\zhalf{\pb{\Q}}\cong \zhalf{\bl{\Q}}\oplus \zhalf{\asym{2}{\Z}{\Q^\times}}\cong \zhalf{\bl{\Q}}\oplus V\cong \Z/3\oplus V
\]
where $V=\zhalf{\asym{2}{\Z}{\Q^\times}}$ is a free $\zhalf{\Z}$-module of countable rank.  Furthermore the exact sequence 
\[
0\to \aug{\Q}\zhalf{\rpbp{\Q}}\to \zhalf{\rpbp{\Q}}\to \zhalf{\pb{\Q}}\to 0
\]
splits as a sequence of $\zhalf{\Z}$-modules since the subgroup $\Z/3\cdot \bconst{\Q}\subset \zhalf{\rpbp{\Q}}$ maps isomorphically to $\zhalf{\bl{\Q}}$. Thus, in view of Theorem \ref{thm:main} we have:
\begin{lem}\label{lem:rpbkerq}  As a $\zhalf{\Z}$-module, $\zhalf{\rpbp{\Q}}$ is a direct sum of an infinite torsion group and a free $\zhalf{\Z}$-module $V$ of countable rank. More particularly:
\begin{eqnarray*}
\zhalf{\rpbp{\Q}}&\cong & \left(\bigoplus_{p\in \primes}\zhalf{\pb{\F{p}}}\right)\oplus\zhalf{\bl{\Q}}\oplus \zhalf{\asym{2}{\Z}{\Q^\times}}\\
&\cong &\left(\bigoplus_{p\in \primes}\Z/\oddpart{p+1}\right)\oplus \Z/3 \oplus V.\\
\end{eqnarray*}
\end{lem}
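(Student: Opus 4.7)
The plan is to combine Theorem~\ref{thm:main}, which computes $\aug{\Q}\zhalf{\rpbp{\Q}}$, with an analysis of the quotient $\zhalf{\pb{\Q}}$ via the short exact sequence of $\sgr{\Q}$-modules
\[
0 \to \aug{\Q}\zhalf{\rpbp{\Q}} \to \zhalf{\rpbp{\Q}} \to \zhalf{\pb{\Q}} \to 0,
\]
which expresses $\pb{\Q} = (\rpbp{\Q})_{\sq{\Q}}$. Once this sequence is split over $\zhalf{\Z}$, the result will follow by substituting the two outer terms.

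First I would compute $\zhalf{\pb{\Q}}$. Tensoring the defining exact sequence
\[
0 \to \bl{\Q} \to \pb{\Q} \xrightarrow{\lambda} \asym{2}{\Z}{\Q^\times} \to K_2(\Q) \to 0
\]
with the flat ring $\zhalf{\Z}$, and using that $K_2(\Q)$ is a torsion $\Z$-module (Bass--Tate), one sees that $\image{\lambda}$ becomes a full-rank submodule of the free $\zhalf{\Z}$-module $V := \zhalf{\asym{2}{\Z}{\Q^\times}}$ (of countable rank) with torsion cokernel. Since $\zhalf{\Z}$ is a PID, this submodule is itself free, necessarily of the same countable rank as $V$, hence abstractly isomorphic to $V$. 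The induced short exact sequence
\[
0 \to \zhalf{\bl{\Q}} \to \zhalf{\pb{\Q}} \to \image{\lambda} \to 0
\]
therefore splits over $\zhalf{\Z}$ by projectivity of $\image{\lambda}$, yielding $\zhalf{\pb{\Q}} \cong \zhalf{\bl{\Q}} \oplus V$. Theorem~\ref{thm:suslin} then gives $\zhalf{\bl{\Q}} \cong \zhalf{\kind{\Q}}$, since $\widetilde{\Tor{\mu_\Q}{\mu_\Q}} \cong \Z/4$ dies after inverting $2$, and the Lee--Szczarba theorem $\kind{\Q} \cong \Z/24$ gives $\zhalf{\kind{\Q}} \cong \Z/3$.

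Next I would split the original sequence over $\zhalf{\Z}$. The free summand $V$ of $\zhalf{\pb{\Q}}$ lifts to $\zhalf{\rpbp{\Q}}$ by projectivity. For the $\Z/3$ summand the canonical lift is the element $\bconst{\Q} \in \rpbp{\Q}$: Proposition~\ref{prop:bconst} gives $3\bconst{\Q} = \suss{1}{-1}$, which vanishes in $\rpbp{\Q}$ by construction, so $3\bconst{\Q} = 0$; and the image of $\bconst{\Q}$ in $\pb{\Q}$ generates the $3$-torsion subgroup of $\bl{\Q}$ (via Suslin's identification in Theorem~\ref{thm:suslin}). Hence $\bconst{\Q}$ has order exactly $3$ in $\zhalf{\rpbp{\Q}}$ and $\Z/3 \cdot \bconst{\Q}$ provides the desired lift of $\zhalf{\bl{\Q}}$. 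Combining the two lifts produces the required $\zhalf{\Z}$-module splitting.

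Finally, substituting the formula $\aug{\Q}\zhalf{\rpbp{\Q}} \cong \bigoplus_p \rfmod{\zhalf{\pb{\F{p}}}}{p}$ from Theorem~\ref{thm:main}, and identifying $\zhalf{\pb{\F{p}}} \cong \Z/\oddpart{p+1}$ via Corollary~\ref{cor:pfin}, yields the stated isomorphism. The only delicate point is ensuring the $\Z/3$-factor really lifts; this reduces to checking that $\bconst{\Q}$ has the correct order and image, which is a small bookkeeping exercise combining Proposition~\ref{prop:bconst} with Theorem~\ref{thm:suslin}. Everything else in the argument is formal manipulation of already-established exact sequences.
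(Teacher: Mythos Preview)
Your proposal is correct and follows essentially the same approach as the paper: assemble $\zhalf{\rpbp{\Q}}$ from the short exact sequence with kernel $\aug{\Q}\zhalf{\rpbp{\Q}}$ (computed by Theorem~\ref{thm:main}) and quotient $\zhalf{\pb{\Q}}$, then split the latter as $\zhalf{\bl{\Q}}\oplus V$ and lift the $\Z/3$ via $\bconst{\Q}$. If anything you are slightly more careful than the paper in noting that $\image{\lambda}$ is only \emph{abstractly} isomorphic to $V$ (as a free $\zhalf{\Z}$-module of the same countable rank), rather than literally equal to it.
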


\begin{cor}\label{cor:laurq}
As an abelian group we have 
\[
\ho{3}{\spl{2}{\laur{\Q}{t}}}{\zhalf{\Z}}\cong \left(\bigoplus_{p\in\primes} \Z/\oddpart{p+1}\right)^{\oplus 2}\oplus (\Z/3)^{\oplus 2}\oplus V. 
\]
\end{cor}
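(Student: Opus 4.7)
The plan is to assemble the corollary from three pieces that are already in place: the structural result from \cite[Theorem 8.1]{hut:laur}, the computation of $\ho{3}{\spl{2}{\Q}}{\zhalf{\Z}}$, and the computation of $\zhalf{\rpbker{\Q}}$ in Lemma \ref{lem:rpbkerq}. No further homological input is required.

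First, I would invoke the isomorphism from \cite[Theorem 8.1]{hut:laur}, valid for any infinite field and recalled at the start of this section, to obtain
\[
\ho{3}{\spl{2}{\laur{\Q}{t}}}{\zhalf{\Z}}\cong \ho{3}{\spl{2}{\Q}}{\zhalf{\Z}}\oplus\zhalf{\rpbker{\Q}}
\]
as abelian groups. It then suffices to identify the two summands on the right.

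Next I would identify the first summand. By the remark in Section \ref{sec:intro}, for any number field $F$ the short exact sequence $0\to \hot{F}{\Z}\to \ho{3}{\spl{2}{F}}{\Z}\to \kind{F}\to 0$ is split as a sequence of abelian groups, so after inverting $2$ we get
\[
\ho{3}{\spl{2}{\Q}}{\zhalf{\Z}}\cong \zhalf{\kind{\Q}}\oplus \hot{\Q}{\zhalf{\Z}}.
\]
Since $\kind{\Q}$ is cyclic of order $24$, $\zhalf{\kind{\Q}}\cong \Z/3$. The main theorem (Theorem \ref{thm:main}), together with Corollary \ref{cor:pfin}, identifies
\[
\hot{\Q}{\zhalf{\Z}}\cong \bigoplus_{p\in\primes}\zhalf{\pb{\F{p}}}\cong \bigoplus_{p\in\primes}\Z/\oddpart{p+1}.
\]

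For the second summand I would use the identification $\zhalf{\rpbker{\Q}}\cong \zhalf{\rpbp{\Q}}$ noted in the text and then plug in Lemma \ref{lem:rpbkerq}, which gives
\[
\zhalf{\rpbker{\Q}}\cong \left(\bigoplus_{p\in\primes}\Z/\oddpart{p+1}\right)\oplus \Z/3\oplus V.
\]
Combining the two computations doubles the $\bigoplus_p \Z/\oddpart{p+1}$ and the $\Z/3$ contributions and produces one copy of $V$, yielding the stated formula. There is no real obstacle here: the work has already been done in Theorem \ref{thm:main} and Lemma \ref{lem:rpbkerq}, so this corollary is essentially bookkeeping.
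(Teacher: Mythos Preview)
Your argument is correct and is exactly the intended one: the paper states the corollary immediately after Lemma \ref{lem:rpbkerq} without proof because it follows by combining the decomposition $\ho{3}{\spl{2}{\laur{\Q}{t}}}{\zhalf{\Z}}\cong \ho{3}{\spl{2}{\Q}}{\zhalf{\Z}}\oplus\zhalf{\rpbker{\Q}}$ with the computation of each summand via Theorem \ref{thm:main} and Lemma \ref{lem:rpbkerq}. Your bookkeeping is accurate and nothing is missing.
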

\subsection{The module $\cconstmod{\Q}$ and the $3$-torsion in $\ho{3}{\spl{2}{\Q}}{\Z}$} 

We let $\cconstmod{F}$ denote the $\sgr{F}$-submodule of $\rpbp{F}$ generated by $\bconst{F}$.  Note that $3\cdot \cconstmod{F}=0$; $\cconstmod{F}$ is an $\F{3}$-vector space. 

For any field $F$, let $\mathcal{H}=\mathcal{H}_F$ denote the $\sgr{F}$-submodule of $\ho{3}{\spl{2}{F}}{\Z}$ generated by the image of $\ho{3}{\spl{2}{\Z}}{\Z}$.

\begin{rem}
Since the $\sgr{F}$-module structure on $\ho{3}{\spl{2}{F}}{\Z}$ is induced from the  action of $\gl{2}{F}$ by conjugation on $\spl{2}{F}$, $\mathcal{H}=\mathcal{H}_F$ is just the 
subgroup $\sum_{g\in \gl{2}{F}}\ho{3}{\spl{2}{\Z}^g}{\Z}$ in $\ho{3}{\spl{2}{F}}{\Z}$; i.e. it is the subgroup of $\ho{3}{\spl{2}{F}}{\Z}$ generated by $\spl{2}{\Z}$ and its 
$\gl{2}{F}$-conjugates. 
\end{rem}

\begin{prop}
 Suppose that $\mathrm{char}(F)\not=3$ and 
$\zeta_3\not\in F$. 

Then the map $\ho{3}{\spl{2}{F}}{\Z}\to \rpbp{F}$ 
induces an isomorphism $\zhalf{\mathcal{H}}\cong \ptor{\mathcal{H}}{3}\cong \cconstmod{F}$.

\end{prop}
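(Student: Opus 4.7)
The plan is to establish the two isomorphisms $\zhalf{\mathcal{H}}\cong \ptor{\mathcal{H}}{3}$ and $\ptor{\mathcal{H}}{3}\cong \cconstmod{F}$ in turn. First I would invoke the known fact that $\ho{3}{\spl{2}{\Z}}{\Z}$ is a finite abelian group whose $3$-primary part is cyclic of order $3$, generated by the class $\bconst{\Z}$ arising from the order-$3$ subgroup $\langle t\rangle\subset\spl{2}{\Z}$. Since $\mathcal{H}$ is the $\sgr{F}$-span, obtained by $\gl{2}{F}$-conjugation (which is $\Z$-linear) of the image of this finite-exponent group, $\mathcal{H}$ itself has finite exponent. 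The primary decomposition $\mathcal{H}=\mathcal{H}_{(2)}\oplus\mathcal{H}_{(3)}$ therefore gives $\zhalf{\mathcal{H}}=\mathcal{H}_{(3)}$, and since $\mathcal{H}_{(3)}$ is $3$-torsion (it is $\sgr{F}$-generated by $\bconst{F}$, the image of $\bconst{\Z}$, which is killed by $3$), this also equals $\ptor{\mathcal{H}}{3}$. In particular, $\zhalf{\mathcal{H}}$ is a cyclic $\sgr{F}$-module generated by $\bconst{F}$.

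For the second isomorphism, surjectivity of the map $\zhalf{\mathcal{H}}\to\cconstmod{F}$ is immediate because $\bconst{F}\in\mathcal{H}$ is sent to $\bconst{F}\in\cconstmod{F}$ and both sides are $\sgr{F}$-cyclic on this generator (using $3\bconst{F}=0$ in $\rpbp{F}$, since $\suss{1}{-1}$ vanishes in $\rpbp{F}$ by construction). Both modules then become cyclic $\F{3}[\sq{F}]$-modules on the common generator $\bconst{F}$, so injectivity reduces to showing that the annihilator of $\bconst{F}$ in $\zhalf{\sgr{F}}$ coincides on the two sides.

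To verify this I would decompose an arbitrary $r\in\zhalf{\sgr{F}}$ as $\epsilon(r)+r'$ with $\epsilon(r)\in\zhalf{\Z}$ and $r'\in\aug{F}\zhalf{\sgr{F}}$. The augmentation-ideal piece $r'\bconst{F}$ lives in $\hot{F}{\zhalf{\Z}}$ on the group-homology side and in $\aug{F}\zhalf{\rpbp{F}}$ on the refined-scissors side; the deep isomorphism $\hot{F}{\zhalf{\Z}}\cong\aug{F}\zhalf{\rpbp{F}}$ of Theorem \ref{thm:rpbp}, together with the injections $\zhalf{\mathcal{H}}\hookrightarrow\zhalf{\ho{3}{\spl{2}{F}}{\Z}}$ and $\cconstmod{F}\subset\zhalf{\rpbp{F}}$, then yields the equivalence $r'\bconst{F}=0$ in $\zhalf{\mathcal{H}}$ iff $r'\bconst{F}=0$ in $\cconstmod{F}$. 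For the constant part I need $\bconst{F}$ to have order exactly $3$ on both sides; projecting $\cconstmod{F}$ onto its $\sq{F}$-coinvariants and mapping into $\qpb{F}$ reduces this to the nonvanishing of $\bconst{F}$ in $\zhalf{\pb{F}}$, and this is where the hypothesis $\zeta_3\not\in F$ enters. Indeed, by Theorem \ref{thm:suslin} the group $\zhalf{\bl{F}}$ is the quotient of $\zhalf{\kind{F}}$ by $\zhalf{\Tor{\mu_F}{\mu_F}}$, and under $\zeta_3\not\in F$ the latter has trivial $3$-primary part, so the nontrivial $3$-torsion class coming from the image of $K_3(\Z)_{(3)}=\Z/3$ in $\zhalf{\kind{F}}$ survives in $\zhalf{\bl{F}}\subset\zhalf{\pb{F}}$.

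The main obstacle is the injectivity step: all the structural content is concentrated in matching the annihilator of $\bconst{F}$ on the group-homology side $\zhalf{\mathcal{H}}$ with that on the refined-scissors side $\cconstmod{F}$, and the heavy lifting is done by the identification in Theorem \ref{thm:rpbp}. The hypothesis $\zeta_3\not\in F$ is precisely what prevents $\bconst{F}$ from collapsing to zero in $\rpbp{F}$; without it, $\cconstmod{F}$ would be trivial while $\zhalf{\mathcal{H}}$ remains nonzero and no such isomorphism could exist.
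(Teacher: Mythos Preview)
Your argument is correct, and the first half (establishing $\zhalf{\mathcal{H}}\cong \ptor{\mathcal{H}}{3}$ via the structure of $\ho{3}{\spl{2}{\Z}}{\Z}\cong\Z/12$, together with surjectivity onto $\cconstmod{F}$) matches the paper's approach exactly.

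For injectivity, however, the paper takes a much more direct route. Rather than splitting $r=\epsilon(r)+r'$ and handling the augmentation-ideal and constant pieces separately, the paper simply observes that the kernel of the ambient map $\ho{3}{\spl{2}{F}}{\zhalf{\Z}}\to \zhalf{\rpbp{F}}$ is isomorphic to $\zhalf{\mu_F}$ (this comes from the Bloch--Wigner-type exact sequence underlying Theorem~\ref{thm:rbl}). Under the hypothesis $\zeta_3\not\in F$, this kernel has trivial $3$-torsion; since $\ptor{\mathcal{H}}{3}$ consists entirely of $3$-torsion elements, its intersection with the kernel is zero and the restriction is injective. That is the whole argument.

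Your route works, but it is considerably more elaborate: you invoke the isomorphism of Theorem~\ref{thm:rpbp} for the $\aug{F}$-part and then separately argue, via Suslin's sequence and $K_3(\Z)_{(3)}$, that $\bconst{F}$ is nonzero in $\zhalf{\pb{F}}$. Both of these facts are consequences of (or closely related to) the single kernel identification the paper uses, so your decomposition is rederiving in two steps what the paper obtains in one. The upside of your approach is that it makes explicit \emph{where} in the module the hypothesis $\zeta_3\not\in F$ is needed (only for the coinvariant part); the downside is the added machinery, and your appeal to the image of $K_3(\Z)_{(3)}$ in $\kind{F}$ being nonzero would itself need justification for a general field $F$.
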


\begin{proof} As above, let 
\[
t:= 
\left(
\begin{array}{ll}
-1&1\\
-1&0\\
\end{array}
\right)\in \spl{2}{\Z}
\]
and let $G$ be the cyclic subgroup of order $3$ generated by $t$. By \cite[Remark 3.14]{hut:rbl11}, the composite map $\Z/3=\ho{3}{G}{\Z}\to \ho{3}{\spl{2}{F}}{\Z}\to \qrpb{F}$ sends $1$ to 
$\bconst{F}$ for any field $F$. 

We recall that $\ho{3}{\spl{2}{\Z}}{\Z}\cong\Z/12$.  Furthermore,  the inclusion $G\to \spl{2}{\Z}$ induces an isomorphism 
\[
\Z/3\cong \ho{3}{G}{\Z}\cong \ptor{\ho{3}{\spl{2}{\Z}}{\Z}}{3}\cong \ho{3}{\spl{2}{\Z}}{\zhalf{\Z}}.
\]
Thus $\ptor{\mathcal{H}}{3}\cong \zhalf{\mathcal{H}}$ maps onto $\cconstmod{F}$. 

On the other hand, the kernel of the map $\ho{3}{\spl{2}{F}}{\zhalf{\Z}}\to \zhalf{\rpbp{F}}$ is isomorphic to $\zhalf{\mu_F}$. In particular, if $\zeta_3\not\in F$, the induced map 
$\ptor{\mathcal{H}}{3}\to \cconstmod{F}$ is also injective.
\end{proof}

\begin{lem}\label{lem:bconstq} We have $\cconstmod{\Q}=\ptor{\ho{3}{\spl{2}{\Q}}{\Z}}{3}$ and 
\[
\cconstmod{\Q}\cong \left(\bigoplus_{p\equiv -1\mod{3}}\Z/3\right)\oplus \Z/3.
\]
\end{lem}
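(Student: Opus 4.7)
The preceding proposition, applied to $F=\Q$ (which has characteristic $0$ and does not contain $\zeta_3$), identifies $\cconstmod{\Q}$ with $\ptor{\mathcal{H}_\Q}{3}$ via the natural map $\ho{3}{\spl{2}{\Q}}{\Z}\to \rpbp{\Q}$; by Theorem \ref{thm:rbl} this map is injective on $3$-torsion. Hence the lemma reduces to showing that $\mathcal{H}_\Q$ exhausts the $3$-torsion of $\ho{3}{\spl{2}{\Q}}{\Z}$, together with a determination of the abelian group structure of that $3$-torsion.

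The structure of the $3$-torsion falls directly out of the main theorem. Since tensoring with $\zhalf{\Z}$ preserves $3$-torsion, and since the short exact sequence
$0 \to \hot{\Q}{\zhalf{\Z}} \to \ho{3}{\spl{2}{\Q}}{\zhalf{\Z}} \to \zhalf{\kind{\Q}} \to 0$
is $\Z$-split (Remark \ref{rem:split}), Theorem \ref{thm:main} yields an isomorphism of abelian groups
\[
\ho{3}{\spl{2}{\Q}}{\zhalf{\Z}} \;\cong\; \zhalf{\kind{\Q}} \oplus \bigoplus_{p}\zhalf{\pb{\F{p}}}.
\]
Now $\kind{\Q} \cong \Z/24$ contributes one copy of $\Z/3$, while by Corollary \ref{cor:pfin} the summand $\zhalf{\pb{\F{p}}} \cong \Z/\oddpart{p+1}$ contributes a copy of $\Z/3$ precisely when $3 \mid p+1$, i.e., when $p \equiv -1 \pmod 3$. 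This gives the asserted abelian group structure.

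To see that $\mathcal{H}_\Q$ contains each of these summands, I would track $\bconst{\Q}$ and its $\sgr{\Q}$-translates. The element $\bconst{\Q} \in \mathcal{H}_\Q$ maps to a generator of $\ptor{\zhalf{\kind{\Q}}}{3}$ via the identification of Remark \ref{rem:split}, and for each prime $p$ its image under $\bar{S}_p$ is $\bconst{\F{p}}$, which generates $\ptor{\zhalf{\pb{\F{p}}}}{3}$ exactly when $p \equiv -1 \pmod 3$ by \cite[Lemma 7.11]{hut:cplx13}. The $\sgr{\Q}$-action is trivial on $\zhalf{\kind{\Q}}$, whereas $\an{a}$ acts on $\rfmod{\zhalf{\pb{\F{p}}}}{p}$ as multiplication by $(-1)^{v_p(a)}$. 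Consequently, for each prime $q$, the element $\pf{q}\bconst{\Q} \in \mathcal{H}_\Q$ has zero $\kind{\Q}$-component, zero $p$-component for $p \neq q$, and $q$-component equal to $-2\bconst{\F{q}}$, which is a generator of $\Z/3$ whenever $q \equiv -1 \pmod 3$. Together with $\bconst{\Q}$ itself these elements generate the full $3$-torsion. The reverse inclusion $\cconstmod{\Q}\subseteq\ptor{\rpbp{\Q}}{3}$ is automatic from $3\bconst{\Q}=0$ in $\rpbp{\Q}$ (Corollary \ref{cor:bconst}).

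The only delicate point is the interplay between the merely $\Z$-linear splitting of the $\kind{\Q}$-extension and the $\sgr{\Q}$-equivariance of the main-theorem isomorphism; this causes no real obstacle because the computation of $3$-torsion components only requires the $\Z$-module structure, while the crucial action identity $\pf{q}\cdot \bconst{\F{p}} = ((-1)^{v_p(q)}-1)\bconst{\F{p}}$ is already $\sgr{\Q}$-equivariant by construction of $\rfmod{\zhalf{\pb{\F{p}}}}{p}$.
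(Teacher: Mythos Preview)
Your argument is correct and follows essentially the same line as the paper's proof. Both arguments rest on the same two facts: Theorem~\ref{thm:main} computing $\hot{\Q}{\zhalf{\Z}}$, and the identity $\sspec{p}(\bconst{\Q})=\bconst{\F{p}}$ together with \cite[Lemma~7.11]{hut:cplx13} pinning down when $\bconst{\F{p}}$ has order~$3$. The only organisational difference is that the paper computes $\cconstmod{\Q}$ directly inside $\rpbp{\Q}$ via the $\Z$-split short exact sequence $0\to\aug{\Q}\cconstmod{\Q}\to\cconstmod{\Q}\to\Z/3\cdot\bconst{\Q}\to 0$ and then identifies $\aug{\Q}\cconstmod{\Q}$ with the $3$-torsion of $\bigoplus_p\zhalf{\pb{\F{p}}}$, whereas you first compute the full $3$-torsion of $\ho{3}{\spl{2}{\Q}}{\zhalf{\Z}}$ and then exhibit the generators $\bconst{\Q}$, $\pf{q}\bconst{\Q}$ in $\mathcal{H}_\Q$. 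One small remark: your appeal to Theorem~\ref{thm:rbl} for injectivity on $3$-torsion is slightly indirect---the map in question is to $\rpbp{\Q}$ rather than $\rbl{\Q}$---but the conclusion is correct since for $F=\Q$ one has $\zhalf{\mu_\Q}=0$ (as used in the proof of the preceding proposition).
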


\begin{proof} Since $\zhalf{\kind{\Q}}\cong \zhalf{\bl{\Q}}=\Z/3\cdot \bconst{\Q}$ we have a ($\Z$-split) short exact sequence of $\sgr{\Q}$-modules
\[
0\to \aug{\Q}\cconstmod{\Q}\to \cconstmod{\Q}\to \Z/3\bconst{\Q}\to 0.
\]

Consider the composite map 
\[
\xymatrix{
\aug{\Q}\cconstmod{\Q}\ar@{^{(}->}[r]& \aug{\Q}\zhalf{\rpbp{\Q}}\ar^-{\cong}[r]&\bigoplus_{p}\zhalf{\pb{\F{p}}}
}
\]
where the right-hand arrow is an isomorphism by Theorem \ref{thm:main}. 
We finish by observing that $\zhalf{\pb{\F{p}}}\cong \Z/\oddpart{p+1}$ has no $3$-torsion except when $p\equiv -1\mod{3}$ and when  $p\equiv -1\mod{3}$ the element $\bconst{\F{p}}=\sspec{p}(\bconst{\Q})$ has order $3$ by \cite[Lemma 7.11]{hut:cplx13}.
\end{proof}

\begin{rem} By our main theorem,  $\ho{3}{\spl{2}{\Q}}{\zhalf{\Z}}$ has (odd) torsion of every possible size. However, the elements of order $3$ in this group all come from the obvious source: the torsion of order $3$ in 
$\spl{2}{\Z}$ and its $\gl{2}{\Q}$- conjugates in $\spl{2}{\Q}$. More precisely, a basis for the $\F{3}$-vector space $\ntor{3}{\ho{3}{\spl{2}{\Q}}{\zhalf{\Z}}}$
 is $\{\tau\}\cup \{ \tau_p\ |p\equiv -1\mod{3}\}$ where $\tau$ is the image of $1\in \Z/3=\ho{3}{\an{t}}{\Z}\to \ho{3}{\spl{2}{\Q}}{\zhalf{\Z}}$ and $\tau_p$ is the image of $1\in \Z/3=\ho{3}{\an{t}^{D_p}}{\Z}\to \ho{3}{\spl{2}{\Q}}{\zhalf{\Z}}$
with $D_p:= \mathrm{diag}(p,1)\in \gl{2}{\Q}$.
\end{rem}

\begin{rem}  Although our main results are over the coefficient ring $\zhalf{\Z}$, it is possible to say something about the $2$-torsion structure of 
$\ho{3}{\spl{2}{\Q}}{\Z}$.  Theorem \ref{thm:main} implies that $\ho{3}{\spl{2}{\Q}}{\Z}$ is a torsion group.  (This is already known from the rank calculations in \cite{borel:yang}). For any global field $F$ there is a well-defined homomorphism (induced by the maps $\sspec{v}$)
\[
\aug{F}\ho{3}{\spl{2}{F}}{\Z}\to \bigoplus_v \qpb{k(v)}
\]
where $v$ ranges over the discrete valuations. Our main theorem tells us that when $F=\Q$ the kernel and cokernel of this homomorphism are $2$-torsion groups.
In fact it can be shown that the cokernel is annihilated by $4$ (since the cokernel of each of the maps $\sspec{v}$ is annihilated by $4$).  It follows that $\ho{3}{\spl{2}{\Q}}{\Z}$ contains elements of order $2^n$ for all $n$; i.e. it also contains $2$-torsion of all possible orders. 
\end{rem}

\end{document}